\newtheorem{theorem}{Theorem}
\newtheorem{proposition}{Proposition}
\newtheorem{example}{Example}
\newcommand{\abs}[1]{\left|#1\right|}
\newcommand{\norm}[1]{\left\|#1\right\|}
\newtheorem{hypothesis}{Hypothesis}
\newtheorem{lemma}[subsection]{Lemma}
\newtheorem{remark}[subsection]{Remark}
\begin{document}

\title{The order of convergence in the averaging principle for slow-fast systems of stochastic evolution equations in Hilbert spaces}
\author{Filippo de Feo \\ filippo.defeo@polimi.it}

\affil{Department of Mathematics, Politecnico di Milano, \\ Piazza Leonardo da Vinci 32, 20133 Milano,
Italy }

\maketitle
\begin{abstract}
In this work we are concerned with the study of the strong order of convergence in the averaging principle for slow-fast  systems of stochastic evolution equations in Hilbert spaces with additive noise. In particular the stochastic perturbations are general Wiener processes, i.e their covariance operators are allowed to be not trace class. We prove that the slow component converges strongly to the averaged one with order of convergence $1/2$ which is known to be optimal. Moreover we apply this result to a slow-fast stochastic reaction diffusion system where the stochastic perturbation is given by a white noise both in time and space.
\end{abstract}
\textbf{Keywords:} averaging principle, order of convergence, slow fast stochastic differential equations, SPDE's, stochastic reaction diffusion system\\
\textbf{Declarations of interest:} none
\section{Introduction}
Consider the following slow-fast system of abstract stochastic evolution equations  with additive noise
\begin{equation}\label{eq:system_introduction}
\begin{cases}
dU_t^{\epsilon}=A_1U_t^{\epsilon} dt+F(U_t^{\epsilon},V_t^{\epsilon})dt+dW^{Q_1}_t, \\
U_0^{\epsilon}=u \in  H, \\
dV_t^{\epsilon}=\epsilon^{-1}A_2V_t^{\epsilon}dt+\epsilon^{-1} G(V_t^{\epsilon})dt+\epsilon^{-1/2}dW^{Q_2}_t ,\\
V_0^{\epsilon}=v \in K,
\end{cases}
\end{equation}
where $\epsilon>0$ is a small parameter representing the ratio of time scales beween the slow component of the system $U^\epsilon$ and the fast one $V^\epsilon$. Here $H,K$ are Hilbert spaces, $A_1,A_2$ are unbounded linear operators on $H,K$ respectively and $W^{Q_1}, W^{Q_2}$ are Wiener processes on $H,K$ respectively.\\
Slow-fast systems are very used in applications since it is very natural for real-world systems to present very different time-scales. We refer the reader for example to \cite{friedlin} for applications to physics,  \cite{rawlings} to chemistry, \cite{wang} to neurophysiology, \cite {bardi}, \cite{defeo_nonautonomous}, \cite{papanicolaou}, \cite{papanicolaou2} to mathematical finance (see also \cite{defeo_thesis} for a slightly different financial model) and the references therein.\\
A natural idea is then to study the behaviour of the system when $\epsilon \to 0$. In particular under certain hypotheses it is known that the slow component $U^\epsilon$ converges to the solution $U$ of the so called averaged equation
\begin{equation*}
\begin{cases}
dU_t=A_1U_t dt+\overline{F}(U_t)dt+dW^{Q_1}_t, \\
U_0=u ,
\end{cases}
\end{equation*}
where $$\overline{F}(u)= \int_K F(u,v) \mu(dv),$$ 
and $\mu$ is the invariant measure related to the fast motion, i.e.
\begin{equation*}
\begin{cases}
dv_t^{v}=A_2v_t^{v} dt+ G(v_t^{v})dt+d w^{Q_2}_t ,\\
v_0^{v}=v \in K.
\end{cases}
\end{equation*}
Note that the equation for $U$ is uncoupled from $V^\epsilon$. This fact is known as averaging principle and it is fundamental in applications since $U$ captures the effective dynamic of $U^\epsilon$ (which is usually the most interesting variable in applications) and it is then a rigorous dimensionality reduction of the original system.\\
The first general result for the averaging principle for finite-dimensional stochastic differential equations can be found in \cite{Khasminskii}. For generalizations and improvements see  \cite{defeo_nonautonomous}, \cite{golec}, \cite{golec2}, \cite{friedlin}, \cite{Wei_liu}, \cite{veretnikov}, \cite{xu} and the references therein. It is important to mention that the drift of the fast equation is allowed to depend also on the slow component (i.e. fully coupled system) and the stochastic perturbations of the slow and fast equations are allowed to be multiplicative, i.e. the diffusion coefficient of the slow equation can depend on both the slow and fast variables. Moreover when the diffusion coefficient of the slow equation is independent of the fast variable then a strong convergence in probability is obtained. Otherwise only a weak convergence can be proved.

The averaging principle for infinite dimensional systems follows more delicated arguments: for this we refer to \cite{cerrai_friedlin}, \cite{Cerrai}, \cite{Cerrai_polynomial}, \cite{Cerrai_lunardi},   \cite{dong}, \cite{fu}, \cite{fu_wave_2015}, \cite{uda}   and the references therein. Also for infinite dimensional systems the previous comment about the dependence of the diffusion coefficients holds. See also \cite{tessitore_backward}, \cite{tessitore_backward2}, \cite{swiech_slow_fast_control} for optimal control problems of slow-fast systems in infinite dimension.

However for numerical applications it is very important to know the speed of convergence for which $U^\epsilon \to U$, e.g. see \cite{brehier2013}, \cite{WEINAN}. For the study of the order of convergence for finite dimensional systems we refer to  \cite{givon_rate2007},  \cite{khasminskii_rate}, \cite{di_liu},  \cite{rokner_averaging_order}, \cite{zhang_2018} and the references therein. It is important to mention that the order of convergence can be studied in two ways: in the strong sense and in the weak sense. Moreover the optimal order for the strong and weak convergence are known to be $1/2$ and $1$ respectively.

Recently the problem of estimating the order of convergence in the averaging principle for infinite dimensional systems systems is being addressed by researchers:\\ 
in \cite{brehier2020} the author (generalizing his previous work \cite{brehier2012}) considers a slow-fast stochastic reaction diffusion system  with additive noise. Both the weak and  strong orders of convergence are obtained: in particular under strong regularity of the noise (it is for example assumed that the covariance operator is trace class but for the precise statement see \cite{brehier2020}) it is proved that the strong order of convergence is $1/2$ and the weak order is $1$ with both orders being optimal. Instead under more general assumptions on the noise only weaker orders of convergence are obtained for both the strong and weak convergence.\\
In \cite{wang} a $1$-dimensional fully coupled reaction-diffusion system is considered and the strong order of convergence is proved to be $1/2$ under very strong assumptions on the covariance operators of the noises, i.e. $Tr (\Delta^{1/2} Q_i)< \infty$, where $\Delta$ is the Laplacian.\\
In \cite{rokner_averaging_devitiation_spdes} the strong order of convergence for a fully coupled slow-fast stochastic system is studied. Here it is assumed that the the covariance operators of the noises are trace-class and moreover that $Tr (-A_1 Q_1) < \infty$.\\
See also \cite{fu_order_2018}  where the weak order of convergence for a stochastic wave equation with fast oscillation given by a fast reaction-diffusion stochastic system is proved to be $1$. Also here it assumed $Tr ( Q_i)< \infty$.\\
Indeed in all these papers the case $Tr ( Q_i)= \infty$, which is very important for applications  as it happens very naturally for example when the stochastic perturbation is a white noise i.e. $Q_i=I$, can't be treated.\\
In this manuscript we are then interested in studying the strong order of convergence for the slow-fast infinite-dimensional system of stochastic evolution equations \eqref{eq:system_introduction} where $W^{Q_1}, W^{Q_2}$ are general Wiener processes on $H,K$ respectively  with covariance operators $Q_1,Q_2$ with $Tr(Q_1)=+ \infty$, $Tr(Q_2) = +\infty$ possibly.  Under some hypotheses, see Hypotheses \ref{hp:operator1}, \ref{hp:operator2}, \ref{hp:coefficients}, \ref{hp:hilbert_schmidt}, \ref{hp:covariance_inverse} below, we prove that the strong order of convergence is $1/2$ which is known to be optimal. In particular we show in Theorem \ref{th:main_theorem} that
\begin{equation*}
\mathbb{E}\left [ \sup_{t\in [0,T]} \abs{U^\epsilon_t-U_t}^2 \right] \leq C \epsilon 
\end{equation*}
where $U_t$ is the solution of the averaged equation. 
Notice that this result is much stronger than \cite{brehier2020}, \cite{rokner_averaging_devitiation_spdes} where $\sup_{t \in [0,T]}$ is outside the expectation. 

The key tool in the proof of Theorem \ref{th:main_theorem} is  Proposition \ref{crucial_lemma_2}. The proof of this proposition is  based on  a technical result, i.e. Lemma \ref{lemma:crucial_lemma}, which is inspired by \cite{Cerrai_normal_dev}, and it is a consequence of the mixing properties of the fast motion, i.e. Lemmas \ref{lemma:mixing_1}, \ref{lemma:mixing_2}, \ref{lemma:mixing_3}.  We recall that \cite{Cerrai_normal_dev} studies the normal deviations, i.e. the weak convergence of $Z^\epsilon:=(U^\epsilon-U)/ \sqrt \epsilon$, when the equation for the slow component has no stochastic perturbation ($Q_1=0$).

Finally we discuss an application of our theory to a $1$-dimensional slow-fast stochastic reaction diffusion system where the stochastic perturbation is given by a white noise both in time and space which to the best of our knowledge, as said before,  can not be treated by the existing literature. 

The paper is organized as follows:
in section \ref{sec:setup} we introduce the problem in a formal way and we state the assumptions that we will use.
In section \ref{sec:estimates} we prove some a-priori estimates. In section \ref{sec:fast_motion} we prove some results related to the fast motion. In section \ref{sec:averaged_eq} we study the well posedness of the averaged equation. In section \ref{sec:preliminary_results} we prove some preliminary results.
In section \ref{sec:order} we prove that the order of convergence is $1/2$ and we give an application of our theory.

\section{Setup and assumptions}
\label{sec:setup}
In this section we define the notation and the assumptions for the rest of the paper.

$H,K$ will be Hilbert spaces with scalar products $\langle \cdot ,\cdot \rangle_H, \langle \cdot ,\cdot \rangle_K$ and $|\cdot|_H, |\cdot|_K$ the induced norms.\\
$B_B(H)$ will denote the space of bounded functions $\phi \colon H \to \mathbb{R}$ with the sup norm $|\cdot|_{H,\infty}$. \\
$Lip(H)$ will denote the set of Lipschitz functions $\phi \colon H \to \mathbb{R}$ and set 
\begin{equation*}
[\phi]_{\text {H,Lip }}=\sup _{x \neq y} \frac{|\phi(x)-\phi(y)|}{|x-y|_H}.
\end{equation*}
$\mathcal{L}(H)$ will denote the space of linear bounded operators from $H$ to $H$, endowed with the operator norm $$\norm{L}_H=\sup_{|x|_H=1}|Lx|_H.$$
Next denote by $\mathcal{L}_2(H)$ the space of Hilbert-Schmidt operators endowed with the norm
$$\norm {L}_{\mathcal{L}_2(H)}=(Tr(L^*L))^{1/2}.$$
The analogous spaces $B_B(K),Lip(K),\mathcal{L}(K),\mathcal{L}_2(K)$ are defined for the Hilbert space $K$ with the corresponding norms $|\cdot|_{K,\infty},[\cdot]_{\text {K,Lip }},\norm{\cdot}_K,\norm{\cdot}_{\mathcal{L}_2(K)}$.\\
In order to simplify the notation we will omit the subscripts $K$ and $H$ in the various norms when no confusion is possible.\\
$\mathcal{B}(H)$ and $\mathcal{B}(K)$ will denote the Borel sigma-algebra in $H$ and $K$ respectively.\\
Consider now the following infinite dimensional system for $0 \leq t \leq T < \infty$
\begin{equation}\label{eq:abstract_system}
\begin{cases}
dU_t^{\epsilon}=A_1U_t^{\epsilon} dt +F(U_t^{\epsilon},V_t^{\epsilon})dt+dW^{Q_1}_t ,\\
U_0^{\epsilon}=u \in H , \\
dV_t^{\epsilon}=\epsilon^{-1}A_2V_t^{\epsilon} dt+\epsilon^{-1} G(V_t^{\epsilon})dt+\epsilon^{-1/2}dW^{Q_2}_t, \\
V_0^{\epsilon}=v \in K,
\end{cases}
\end{equation}
 where
\begin{itemize}
\item $\epsilon>0$ is a parameter,
\item $A_1 \colon D(A_1) \subset H \to H$, $A_2 \colon D(A_2) \subset K \to K$ are linear operators,
\item $F \colon H\times K\rightarrow  H$, $G \colon  K\rightarrow  K$,
\item $W^{Q_1}, W^{Q_2}$ are independent cylindrical Wiener processes on $H$, $K$ respectively with covariance operator $Q_1,Q_2$ respectively and they are defined on some probability space $(\Omega,\mathcal{F},\mathbb{P})$ with a normal filtration $\mathcal{F}_t$, $t \geq0 $.
\end{itemize}
We now state the assumptions that we will use throughout the work:
\begin{hypothesis}\label{hp:operator1}
$A_1 \colon D(A_1) \subset H \to H$ is a linear operator generator of an analytical semigroup $e^{A_1 t}$ on $H$, $t \geq0 $.\\ Moreover there exist an orthonormal basis $\{e_k\}_{k \in \mathbb{N}}$ of $H$ and $\{\alpha_k\}_{k \in \mathbb{N}} \subset (0, +\infty)$ such that
$$A_1 e_k = -\alpha_k e_k.$$ 
Moreover we assume that there exist $\zeta>0,n \geq 2$ integer and $ 1/(2n)< \beta < 1/3$ such that
$$\sum_{k=1}^{\infty} \alpha_{k}^{-\zeta} < +\infty$$
and
$$\sum_{k=1}^{\infty} \alpha_{k}^{n(\zeta+2\beta-1)-\zeta}< +\infty.$$
\end{hypothesis}
\begin{remark}\label{rem:laplacian_hp_1}
Hypothesis \ref{hp:operator1} is necessary in the proof of Proposition \ref{crucial_lemma_2}. It holds for example when $A_1= \Delta$ is the Laplacian on $[0,L]$. Indeed in this case it is well known that $\alpha_k = C k^2$ and then the two series converge for example by choosing  $\zeta=\frac{3}{5}$, $n=3$, $\beta=\frac{1}{5}$.
\end{remark}
From Hypothesis \ref{hp:operator1} we have the following spectral representation:
\begin{equation}\label{eq:spectral_representation_A_1}
e^{At}x=\sum_{k=1}^{\infty}e^{-\alpha_k t}\langle x,e_k \rangle e_k ,\quad \forall t >0.
\end{equation}
Moreover we can define the fractional powers of $-A_1$ denoted by  $(-A_1)^{\theta}$ for $\theta  \geq 0$ with domain $D((-A_1)^\theta)$. We will denote by $|\cdot|_{\theta}$ the norm $|\cdot|_{D((-A_1)^\theta)}=|(-A_1)^\theta \cdot|$.\\
The following standard results holds, e.g. see \cite[Chapter 2, Theorem 6.13]{pazy},
\begin{equation}\label{eq:bound_-A_exp At}
\| (-A_1)^\theta e^{At} \| \leq C_\theta t^{-\theta} e^{-\nu t} , \quad \forall \theta \geq 0, \textcolor{red}{ t > 0}, 
\end{equation}
for some $\nu >0$ and
\begin{equation}\label{eq:holder_continuity_semigroup}
|e^{A_1t}x-x| \leq C_\theta t^{\theta}|x|_{\theta}, \quad \forall x \in D\left ((-A_1)^\theta \right ), 0< \theta \leq 1 , t \geq 0.
\end{equation}
\begin{hypothesis}\label{hp:operator2}
$A_2 \colon D(A_2) \subset K \to K$ is a linear operator generator of a $\mathcal{C}_0$-semigroup $e^{A_2 t}$ on $K$, $t \geq0 $.\\ Moreover there exists $\lambda>0$ such that
\begin{equation}
\norm{e^{A_2 t }}\leq e^{-\lambda t},
\end{equation}
for every $t\geq 0$.
\end{hypothesis}
\begin{hypothesis}\label{hp:coefficients}
There exist $L_F, L_G>0$ such that
\begin{align*}
&|F(x_2,y_2)-F(x_1,y_1)|\leq L_F (|x_2-x_1|+|y_2-y_1| ),\\
&|G(y_2)-G(y_1)|\leq L_G |y_2-y_1| ,
\end{align*}
for every $x_1,x_2 \in H, y_1,y_2 \in K$.\\
Moreover we assume that $$L_G < \lambda.$$
\end{hypothesis}
We remark that this implies that $A_2+G(\cdot)$ is strongly dissipative, i.e. set 
\begin{equation}\label{eq:definition_delta}
\delta=\frac{\lambda-L_G}{2}>0 ,
\end{equation}
then it holds:
\begin{equation}\label{eq:strong_dissipativity_G}
\langle A_2 (y_2-y_1) + G(y_2) -G(y_1) , y_2-y_1\rangle  \leq -2\delta|y_2-y_1|^2 ,
\end{equation}
for every  $y_1,y_2 \in D(A_2)$.\\
\begin{hypothesis}\label{hp:hilbert_schmidt}
There exist $C>0$, $\gamma \in (0,1/2)$ such that
\begin{align*}
&\norm {e^{A_1t}Q_1^{1/2} }_{\mathcal{L}_2}\leq C (t \wedge 1)^{-\gamma}, \\
&\norm { e^{A_2t}Q_2^{1/2} }_{\mathcal{L}_2}\leq C (t \wedge 1)^{-\gamma} ,
\end{align*}
for every $t>0$.
\end{hypothesis}
\begin{hypothesis}\label{hp:covariance_inverse}
Assume that $Q_2$ is invertible (with inverse $Q_2^{-1} \in \mathcal L(K)$).
\end{hypothesis}
\begin{remark}\label{rem:covariances_laplacian}
Hypotheses \ref{hp:hilbert_schmidt}, \ref{hp:covariance_inverse} hold for example choosing $A_1= A_2=A$ to be the Laplacian on $[0,L]$ and $Q_1=Q_2=Q=I$.\\
Indeed Hypothesis \ref{hp:covariance_inverse} is immediately satisfied.
Moreover by setting $H=K=L^2([0,L])$, we have that $A e_k =-C k^2 e_k$ for some orthonormal basis $\{ e_k\}$ of eigenvectors of $A$. Thus, by the spectral representation \eqref{eq:spectral_representation_A_1} with $\alpha_k=C k^2$, we have:
\begin{small}
\begin{align*}
\norm {e^{A t}Q^{1/2} }_{\mathcal{L}_2}^2&=\norm {e^{A t} }_{\mathcal{L}_2}^2 =\sum_{h=1}^\infty |e^{At}e_h|^2
=\sum_{h=1}^{\infty} e^{-2Ch^2t} \leq \int_{0}^{\infty} e^{-2Ch^2t} \textcolor{red}{dh}=\frac{1}{\sqrt{2Ct}}\int_{0}^{\infty} e^{-y^2} dy = C t ^{-1/2},
\end{align*}
\end{small}
where the inequality follows since $\forall t > 0$ the function $h \mapsto e^{-Ch^2 t}$ is non-increasing. This shows that Hypothesis \ref{hp:hilbert_schmidt} holds with $\gamma=1/4.$
\end{remark}
\begin{proposition}
Let $u \in H, v \in K$, under Hypotheses \ref{hp:operator1}, \ref{hp:operator2}, \ref{hp:coefficients}, \ref{hp:hilbert_schmidt}, \ref{hp:covariance_inverse} there exists a unique mild solution of \eqref{eq:abstract_system} given by 
\begin{equation}\label{eq:mild_solution}
\begin{cases}
U_t^{\epsilon}=e^{A_1t}u +\int_0^te^{A_1(t-s)} F(U_s^{\epsilon},V_s^{\epsilon})ds +\int_0^te^{A_1(t-s)} dW^{Q_1}_s, \\
V_t^{\epsilon}=e^{A_1t}v +\int_0^te^{A_2(t-s)} G(V_s^{\epsilon})ds +\int_0^te^{A_2(t-s)} dW^{Q_2}_s ,
\end{cases}
\end{equation}
for every $t \in [0,T]$.
\end{proposition}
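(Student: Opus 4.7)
The plan is a standard Banach fixed point argument for fixed $\epsilon > 0$.

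First, I would check that the stochastic convolutions $W^{A_i}_t := \int_0^t e^{A_i(t-s)}\,dW^{Q_i}_s$, $i=1,2$, define continuous adapted processes in $H$ and $K$ respectively with finite $p$-th moments for every $p \geq 2$. By Hypothesis \ref{hp:hilbert_schmidt},
\begin{equation*}
\int_0^T \|e^{A_i s} Q_i^{1/2}\|_{\mathcal{L}_2}^2\,ds \leq C \int_0^T (s\wedge 1)^{-2\gamma}\,ds < +\infty
\end{equation*}
since $\gamma < 1/2$, so It\^o's isometry makes the stochastic integrals well defined in $L^2(\Omega; H)$ and $L^2(\Omega; K)$. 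Trajectory continuity and finite $p$-th moments then follow from the Da Prato-Zabczyk factorization method, provided $p$ is chosen so that $\gamma + 1/p < 1/2$.

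Second, I would work in the Banach space $\mathcal{X}_T$ of $(\mathcal{F}_t)$-adapted processes $(U, V)\colon [0,T] \times \Omega \to H \times K$ with continuous trajectories, equipped with the weighted norm
\begin{equation*}
\|(U,V)\|_{p,\eta} := \sup_{t \in [0,T]} e^{-\eta t}\bigl(\mathbb{E}|U_t|^p + \mathbb{E}|V_t|^p\bigr)^{1/p},
\end{equation*}
for a suitably large $p$ and a parameter $\eta > 0$ to be fixed later. On $\mathcal{X}_T$ I define $\Phi(U, V) = (\tilde U, \tilde V)$ by declaring $\tilde U_t, \tilde V_t$ to be the right-hand sides of \ref{eq:mild_solution}. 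Combining the Lipschitz continuity of $F$ and $G$ from Hypothesis \ref{hp:coefficients} with the contractivity bounds $\|e^{A_1 t}\| \leq 1$ and $\|e^{A_2 t}\| \leq e^{-\lambda t}$ from Hypotheses \ref{hp:operator1}-\ref{hp:operator2}, a direct computation via Minkowski's inequality yields
\begin{equation*}
\|\Phi(U_1, V_1) - \Phi(U_2, V_2)\|_{p,\eta} \leq C(\epsilon, \eta)\, \|(U_1, V_1) - (U_2, V_2)\|_{p,\eta},
\end{equation*}
with $C(\epsilon, \eta) \to 0$ as $\eta \to \infty$. Taking $\eta$ large enough makes $\Phi$ a strict contraction, and its unique fixed point is the desired mild solution. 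Its continuity can finally be upgraded pathwise using Kolmogorov's criterion together with moment bounds obtained by iterating the mild formula.

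I do not anticipate any serious obstacle: the hypotheses have been tailored so that this scheme goes through routinely. Only the contractivity $\|e^{A_1 t}\| \leq 1$ from Hypothesis \ref{hp:operator1} and the decay $\|e^{A_2 t}\| \leq e^{-\lambda t}$ from Hypothesis \ref{hp:operator2} are used here; the finer spectral assumptions in Hypothesis \ref{hp:operator1}, as well as Hypothesis \ref{hp:covariance_inverse}, enter only in the sharper results later in the paper. The prefactors $\epsilon^{-1}$ and $\epsilon^{-1/2}$ in the fast equation affect only the constants, since $\epsilon > 0$ is fixed throughout.
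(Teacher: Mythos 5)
Your fixed-point argument is correct and is precisely the standard contraction-mapping proof for semilinear equations with additive noise and Lipschitz coefficients; the paper gives no independent argument here but simply cites Da Prato--Zabczyk, where essentially this scheme is carried out, so you are taking the same route. Your accounting of which hypotheses are actually used (the semigroup bounds, the Lipschitz condition, and Hypothesis \ref{hp:hilbert_schmidt} for the stochastic convolutions, with Hypothesis \ref{hp:covariance_inverse} and the spectral conditions reserved for later results) is also accurate.
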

\begin{proof}
See e.g. \cite{daprato_stochastic_eq}.
\end{proof}
In the sequel we will always assume that Hypotheses  \ref{hp:operator1}, \ref{hp:operator2}, \ref{hp:coefficients}, \ref{hp:hilbert_schmidt}, \ref{hp:covariance_inverse} hold. Moreover  $C>0$ will denote a generic constant independent of $\epsilon$ which may change from line to line. 
\section{A priori estimates}\label{sec:estimates}
In this section we prove some classical a-priori estimates for the slow and fast components.\\
In the following for every $t \geq 0$ denote by
 $$\Gamma^{1}_t=\int_0^t e^{A_1 (t-s)} dW_s^{Q_1}$$
and $$\Gamma^{2 \epsilon}_t= \frac{1}{\sqrt{\epsilon}}\int_0^t e^{A_2 (t-s)/ \epsilon} dW_s^{Q_2}$$
the stochastic convolutions.\\
First we prove some estimates related to $\Gamma^{1}_t$ and $\Gamma^{2 \epsilon}_t$.
\begin{lemma}\label{lemma:estimate_stoch_convolution}
\begin{equation*}
\mathbb{E}\left [ \sup_{t\in [0,T]} \abs{\Gamma^{1}_t}_{\theta}^p \right] < +\infty,
\end{equation*}
for every $0 \leq \theta < 1/2-\gamma$, $p \geq 1$.
\end{lemma}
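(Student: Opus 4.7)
The plan is to apply the Da Prato--Zabczyk factorization method, which is the standard way to derive $\theta$-regularity of the stochastic convolution. Fix $\alpha \in (\theta, 1/2 - \gamma)$, which is possible thanks to the range assumption on $\theta$. Using the identity $\int_s^t (t-r)^{\alpha-1}(r-s)^{-\alpha} \dd r = \pi/\sin(\pi\alpha)$ together with the stochastic Fubini theorem, one can rewrite
\begin{equation*}
\Gamma^1_t = \frac{\sin(\pi\alpha)}{\pi}\int_0^t (t-s)^{\alpha-1} e^{A_1(t-s)} Y_s \dd s, \qquad Y_s := \int_0^s (s-r)^{-\alpha} e^{A_1(s-r)} dW^{Q_1}_r.
\end{equation*}

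First I would estimate the moments of the auxiliary process $Y$. Since $Y_s$ is a centered Gaussian element of $H$, the It\^o isometry and Hypothesis \ref{hp:hilbert_schmidt} give
\begin{equation*}
\mathbb{E}|Y_s|^2 = \int_0^s (s-r)^{-2\alpha}\,\norm{e^{A_1(s-r)}Q_1^{1/2}}_{\mathcal{L}_2}^2 \dd r \leq C\int_0^s (s-r)^{-2\alpha}((s-r)\wedge 1)^{-2\gamma}\dd r,
\end{equation*}
and this integral is finite uniformly for $s\in[0,T]$ because $2\alpha + 2\gamma < 1$. By Gaussian hypercontractivity, $\sup_{s\in[0,T]} \mathbb{E}|Y_s|^p < +\infty$ for every $p\geq 1$.

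Next I would transfer the regularity to $\Gamma^1_t$ via the analytic semigroup bound $\norm{(-A_1)^\theta e^{A_1 r}} \leq C r^{-\theta}$, which follows from Hypothesis \ref{hp:operator1}. Applying $(-A_1)^\theta$ to the factorized formula yields
\begin{equation*}
|\Gamma^1_t|_\theta \leq C\int_0^t (t-s)^{\alpha-1-\theta}|Y_s|\dd s.
\end{equation*}
For $p$ large enough that the conjugate exponent $q'=p/(p-1)$ satisfies $(\alpha-1-\theta)q' > -1$ (which is possible since $\alpha-\theta > 0$), H\"older's inequality gives
\begin{equation*}
\sup_{t\in[0,T]}|\Gamma^1_t|_\theta^p \leq C_{T,\alpha,\theta,p}\int_0^T |Y_s|^p\dd s,
\end{equation*}
and taking expectation together with the uniform $L^p$ bound on $Y_s$ from the previous step closes the argument for large $p$. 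For small $p \geq 1$, Jensen's inequality reduces the claim to the large-$p$ case.

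The main obstacle is the bookkeeping in choosing $\alpha$: it has to simultaneously exceed $\theta$ (to absorb the fractional-power singularity in the semigroup estimate) and stay below $1/2-\gamma$ (to keep $Y$ in $L^2$). This is precisely where the restriction $\theta < 1/2-\gamma$ enters, and Hypothesis \ref{hp:hilbert_schmidt} provides exactly the integrable singularity needed.
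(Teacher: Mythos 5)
Your proposal is correct and follows essentially the same argument as the paper: the Da Prato--Zabczyk factorization, a uniform $L^p$ bound on the auxiliary process $Y$, H\"older's inequality in the outer (deterministic) convolution for $p$ large, and Jensen/H\"older to cover small $p\geq 1$. The only (immaterial) difference is bookkeeping: you absorb $(-A_1)^\theta$ into the outer integral via $\norm{(-A_1)^\theta e^{A_1(t-s)}}\leq C(t-s)^{-\theta}$ and keep $Y$ in the plain $H$-norm, whereas the paper estimates $\mathbb{E}|Y_\rho|^p_\theta$ directly by splitting the semigroup inside the stochastic integral; both placements yield the same constraint $\theta+\text{(factorization exponent)}<1/2-\gamma$ and the same conclusion.
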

\begin{proof}
Fix $0 <\eta < 1/2$, by the factorization method, e.g. see \cite[Chapter 5, Section 3]{daprato_stochastic_eq}, we have:
$$\Gamma^{1}_t=C \int_0^t e^{A_1(t - \rho)}(t-\rho)^{\eta-1}Y_\rho d\rho$$
where 
$$Y_\rho=\int_0^\rho e^{A_1 (\rho-s)} (\rho-s)^{-\eta}dW_s^{Q_1}.$$
Now fix $0 \leq \theta < 1/2-\gamma$, then by Holder's inequality:
\begin{align}\label{eq:proof_lemma_stoch_conv_gamma1}
\mathbb{E} \left[\sup_{t \leq T}  \left |\Gamma^{1}_t \right|^p_\theta \right] \leq C \int_0^T \mathbb{E} |Y_\rho|^p_\theta d\rho,
\end{align}
for every $p > 1/\eta>2$.\\
Now we estimate $\mathbb{E} |Y_\rho|^p_\theta$; since $Y_\rho$ is a Gaussian random variable, by Ito's isometry, \eqref{eq:bound_-A_exp At} and Hypothesis \ref{hp:hilbert_schmidt} we have:
\begin{align*}
\mathbb{E} |Y_\rho|^p_\theta  & \leq C \left( \mathbb E |Y_\rho|_\theta^2 \right)^{p/2} =  C \left ( \int_0^\rho \norm{(-A_1)^\theta e^{A_1 (\rho-s)} (\rho-s)^{-\eta} Q_1^{1/2}}^2_{\mathcal{L}_2} ds \right)^{p/2} \\
& \leq C \left ( \int_0^\rho  (\rho-s)^{-2\eta} \norm{(-A_1)^\theta e^{A_1 (\rho-s)/2}}^2  \norm{ e^{A_1 (\rho-s)/2} Q_1^{1/2}}^2_{\mathcal{L}_2} ds \right)^{p/2} \\
& \leq C \left ( \int_0^\rho  (\rho-s)^{-2(\eta+\theta+\gamma)}  ds \right)^{p/2} < C,
\end{align*}
for every $\rho \leq T$ and $\theta, \eta$ such that $0 \leq \theta + \eta < 1/2 -\gamma$.\\
Next inserting the last inequality in \eqref{eq:proof_lemma_stoch_conv_gamma1} and recalling that $p > 1/\eta$, which yields $ p > (1/2 - \gamma - \theta)^{-1}$
 we obtain the thesis of the Lemma for $0 \leq \theta < 1/2-\gamma$, $p > (1/2 - \gamma - \theta)^{-1}>2$.\\
 Finally by Holder's inequality we have the thesis of the Lemma.
\end{proof}
\begin{lemma}\label{lemma:estimate_stoch_convolution_2}
Let $p\geq 2$, then there exists $C=C(p)>0$ such that
\begin{equation*}
\sup_{t>0} \mathbb{E}\left [ \abs{\Gamma^{2  \epsilon}_t}^p \right] \leq C,
\end{equation*}
for every $\epsilon>0$.
\end{lemma}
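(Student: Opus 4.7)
The plan is to apply the Burkholder–Davis–Gundy inequality for Hilbert space-valued stochastic integrals against a $Q_2$-Wiener process, then perform the natural change of variables $r=(t-s)/\epsilon$ in order to absorb the $1/\sqrt{\epsilon}$ prefactor, and finally bound the resulting deterministic integral uniformly in $t$ and $\epsilon$ by splitting near the singularity at $r=0$ and using the exponential decay of $e^{A_2 r}$ for large $r$.

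More precisely, since the integrand $\epsilon^{-1/2} e^{A_2(t-s)/\epsilon}$ is deterministic, Burkholder's inequality gives
\begin{equation*}
\mathbb{E} \abs{\Gamma^{2\epsilon}_t}^p \leq C_p \left( \int_0^t \frac{1}{\epsilon} \norm{e^{A_2 (t-s)/\epsilon} Q_2^{1/2}}^2_{\mathcal{L}_2} \dd s \right)^{p/2}.
\end{equation*}
Substituting $r = (t-s)/\epsilon$ turns the inner integral into $\int_0^{t/\epsilon} \norm{e^{A_2 r} Q_2^{1/2}}^2_{\mathcal{L}_2} \dd r$, which no longer sees $\epsilon$. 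It then suffices to show this integral is bounded by a constant independent of $t$.

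For this, I would split at $r=1$. On $[0,1]$ hypothesis \ref{hp:hilbert_schmidt} gives $\norm{e^{A_2 r} Q_2^{1/2}}^2_{\mathcal{L}_2} \leq C r^{-2\gamma}$, which is integrable because $\gamma < 1/2$. For $r \geq 1$, I factor $e^{A_2 r} Q_2^{1/2} = e^{A_2(r-1)} \, e^{A_2} Q_2^{1/2}$ and use the Hilbert–Schmidt ideal property together with hypothesis \ref{hp:operator2}:
\begin{equation*}
\norm{e^{A_2 r} Q_2^{1/2}}_{\mathcal{L}_2} \leq \norm{e^{A_2 (r-1)}} \,\norm{e^{A_2} Q_2^{1/2}}_{\mathcal{L}_2} \leq C e^{-\lambda (r-1)}.
\end{equation*}
Thus $\int_1^\infty \norm{e^{A_2 r} Q_2^{1/2}}^2_{\mathcal{L}_2} \dd r$ converges, yielding an $\epsilon$- and $t$-independent bound for the full integral; raising to the power $p/2$ gives the claim.

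There is no real obstacle here: this is the standard rescaling that motivates the $\epsilon^{-1/2}$ prefactor in the fast equation, the only subtlety being that one must use both parts of the bound on $\norm{e^{A_2 r} Q_2^{1/2}}_{\mathcal{L}_2}$ (singular behaviour near $0$, exponential decay near $\infty$) to get uniformity of the bound in $t>0$ rather than merely in $t \in [0,T]$. An alternative, essentially equivalent route is to observe that $\Gamma^{2\epsilon}_t$ is a centered Gaussian random variable in $K$, compute $\mathbb{E}|\Gamma^{2\epsilon}_t|^2$ directly via the Itô isometry with the same change of variables, and then pass to general $p \geq 2$ via the equivalence of moments for Gaussian vectors in Hilbert space.
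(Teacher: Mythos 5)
Your proposal is correct and follows essentially the same route as the paper: Burkholder's inequality for the deterministic integrand, the change of variables $r=(t-s)/\epsilon$ to remove $\epsilon$, and a bound on $\int_0^{\infty}\norm{e^{A_2 r}Q_2^{1/2}}^2_{\mathcal{L}_2}\dd r$ combining the small-time singularity from hypothesis \ref{hp:hilbert_schmidt} with the exponential decay from hypothesis \ref{hp:operator2}. The only cosmetic difference is that the paper factors $e^{A_2\rho}Q_2^{1/2}=e^{A_2\rho/2}\,e^{A_2\rho/2}Q_2^{1/2}$ for all $\rho$ instead of splitting the integral at $r=1$, which yields the same convergent integral.
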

\begin{proof}
For $t>0$, by \cite[Theorem 4.36]{daprato_stochastic_eq} and by our hypotheses we have
\begin{align*}
\mathbb{E}\left [ \abs{\Gamma^{2  \epsilon}_t}^p \right] & \leq C \left( \int_0^t \frac{1}{\epsilon}  \norm{e^{A_2(t-s)/\epsilon} Q_2^{1/2}}_{\mathcal{L}_2}^2ds \right)^{p/2}\\
&= C  \left( \int_0^{t/ \epsilon} \norm{e^{A_2 \rho} Q_2^{1/2}}_{\mathcal{L}_2}^2 d \rho \right)^{p/2}\\
& \leq C \left( \int_0^{t/ \epsilon} \norm{e^{A_2 \rho/2} }^2 \norm{e^{A_2 \rho/2} Q_2^{1/2}}_{\mathcal{L}_2}^2 d \rho  \right)^{p/2} \\
& \leq C \left( \int_0^{t/ \epsilon}  e^{-\lambda \rho}  \left(\frac{\rho}{2} \wedge 1 \right )^{-2 \gamma} d \rho \right)^{p/2} \\
& \leq C \left( \int_0^{\infty}  e^{-\lambda \rho}  \left(\frac{\rho}{2} \wedge 1 \right )^{-2\gamma} d \rho \right)^{p/2}  \leq C ,
\end{align*}
so that the thesis is proved.
\end{proof}
\begin{lemma}\label{lemma:a_priori_estimates}
Let $p \geq 2$ then there exists $C=C(T,p)>0$ such that
\begin{align}\label{eq:a_priori_estimate_U}
 \mathbb{E}\left [ \sup_{t\in [0,T]} \abs{U^\epsilon_t}^p \right] \leq C (1+|u|^p+|v|^p)
 \end{align}
 and
 \begin{align}\label{eq:a_priori_estimate_V}
\sup_{t\in [0,T]} \mathbb{E}\left [ \abs{V^\epsilon_t}^p \right] \leq C (1+|u|^p+|v|^p),
\end{align}
for every $\epsilon>0$.
\end{lemma}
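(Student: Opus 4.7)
The plan is to handle the two estimates together in a coupled way, using the dissipativity of the fast component (built into hypothesis \ref{hp:coefficients} via $L_G<\lambda$) to produce a bound on $V^\epsilon$ that survives the $\epsilon^{-1}$ scaling, and then use this bound to close a Gr\"onwall argument for $U^\epsilon$. The stochastic convolutions $\Gamma^1$ and $\Gamma^{2\epsilon}$ are isolated at the start and controlled by Lemmas \ref{lemma:estimate_stoch_convolution} and \ref{lemma:estimate_stoch_convolution_2}, so the remaining work is on deterministic (pathwise) integral equations.

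For the fast component, I would set $\Lambda_t^\epsilon := V_t^\epsilon-\Gamma_t^{2\epsilon}$, which satisfies the pathwise equation $\frac{d}{dt}\Lambda_t^\epsilon = \epsilon^{-1}A_2\Lambda_t^\epsilon + \epsilon^{-1}G(U_t^\epsilon,\Lambda_t^\epsilon+\Gamma_t^{2\epsilon})$ with $\Lambda_0^\epsilon=v$. Differentiating $|\Lambda^\epsilon|^2$ and applying the dissipativity inequality \eqref{eq:strong_dissipativity_G} with $y_1=0$, together with the Lipschitz bound $|G(U_t^\epsilon,\cdot)-G(U_t^\epsilon,0)|\le L_G|\cdot|$ and $|G(U_t^\epsilon,0)|\le|G(0,0)|+L_G|U_t^\epsilon|$, followed by a Young inequality to absorb a small multiple of $|\Lambda_t^\epsilon|^2$ into the dissipative term, would give
\begin{equation*}
\tdev{}{t}|\Lambda_t^\epsilon|^2\le -\frac{2\delta}{\epsilon}|\Lambda_t^\epsilon|^2+\frac{C}{\epsilon}\bigl(1+|U_t^\epsilon|^2+|\Gamma_t^{2\epsilon}|^2\bigr).
\end{equation*}
Gr\"onwall then yields $|\Lambda_t^\epsilon|^2\le|v|^2+\frac{C}{\epsilon}\int_0^t e^{-2\delta(t-s)/\epsilon}(1+|U_s^\epsilon|^2+|\Gamma_s^{2\epsilon}|^2)\,ds$. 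Raising to power $p/2$ using Jensen against the probability measure $\frac{2\delta}{\epsilon}e^{-2\delta(t-s)/\epsilon}\,ds$ (whose mass is $\le 1$), taking expectation and applying Lemma \ref{lemma:estimate_stoch_convolution_2} produces
\begin{equation*}
\mathbb{E}|V_t^\epsilon|^p\le C\bigl(1+|v|^p+\mathbb{E}\sup_{s\le t}|U_s^\epsilon|^p\bigr),
\end{equation*}
with $C$ independent of $\epsilon$ thanks to the cancellation between $\epsilon^{-1}$ and the exponential kernel.

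For the slow component, I would use the mild formulation, the contractivity $\|e^{A_1 t}\|\le 1$, and the Lipschitz bound on $F$ to get, pathwise,
\begin{equation*}
\sup_{s\le t}|U_s^\epsilon|\le|u|+\int_0^t C\bigl(1+|U_r^\epsilon|+|V_r^\epsilon|\bigr)dr+\sup_{s\le T}|\Gamma_s^1|.
\end{equation*}
Raising to the $p$-th power with Jensen's inequality, taking expectation, and invoking Lemma \ref{lemma:estimate_stoch_convolution} with $\theta=0$ reduces the estimate to $\mathbb{E}\sup_{s\le t}|U_s^\epsilon|^p\le C(1+|u|^p)+C\int_0^t(\mathbb{E}\sup_{r\le s}|U_r^\epsilon|^p+\mathbb{E}|V_s^\epsilon|^p)ds$. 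Substituting the $V^\epsilon$ bound above collapses this into a Gr\"onwall inequality in $\mathbb{E}\sup_{s\le t}|U_s^\epsilon|^p$ with driving term $C(1+|u|^p+|v|^p)$, yielding \eqref{eq:a_priori_estimate_U}. Plugging the result back into the $V^\epsilon$ estimate gives \eqref{eq:a_priori_estimate_V}.

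The one delicate point is the $V^\epsilon$ part: a naive application of Burkholder/Jensen to the mild formulation of $V^\epsilon$ would leave factors of $\epsilon^{-1}$ uncontrolled. The trick that makes the whole argument work is to remove the stochastic convolution first and then exploit the dissipativity in the pathwise equation for $\Lambda^\epsilon$, so that the exponential kernel $\epsilon^{-1}e^{-2\delta t/\epsilon}$ integrates to a constant independent of $\epsilon$; everything else is routine.
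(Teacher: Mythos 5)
Your proposal is correct and follows essentially the same strategy as the paper: subtract the stochastic convolutions $\Gamma^1$, $\Gamma^{2\epsilon}$ (controlled by Lemmas \ref{lemma:estimate_stoch_convolution}, \ref{lemma:estimate_stoch_convolution_2}), exploit the strong dissipativity \eqref{eq:strong_dissipativity_G} so that the kernel $\epsilon^{-1}e^{-2\delta(t-s)/\epsilon}$ integrates to an $\epsilon$-independent constant, and close with Gr\"onwall. The only differences are bookkeeping: the paper gets a first-power pathwise bound on $V^\epsilon-\Gamma^{2\epsilon}$ (via the $\sqrt{x+\theta}$ trick of \cite{tessitore_backward}) and closes Gr\"onwall on $\mathbb{E}|V^\epsilon_t|^p$ first, whereas you work with the squared quantity plus Young/Jensen and close Gr\"onwall on $\mathbb{E}\sup_{s\le t}|U^\epsilon_s|^p$ first; both are valid.
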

\begin{proof}
Define
$$ \Lambda^{1\epsilon}_t=U_t^\epsilon-\Gamma^{1}_t,$$
so that
$$d \Lambda^{1\epsilon}_t= A_1 \Lambda^{1\epsilon}_t dt + F(\Lambda^{1\epsilon}_t+\Gamma^{1}_t , V^\epsilon_t)dt.$$
By Young's inequality and  Hypotheses \ref{hp:operator1},  \ref{hp:coefficients} we have:
\begin{align*}
\frac{1}{p} \frac{d}{d t}\left|\Lambda^{1\epsilon}_t \right|^{p}=
&\langle A_{1} \Lambda^{1\epsilon}_t, \Lambda^{1 \epsilon}_t \rangle \left|\Lambda^{1\epsilon}_t\right|^{p-2}+
\langle F(\Lambda^{1\epsilon}_t+\Gamma^{1}_t, V^{\epsilon}_t ) , \Lambda^{1 \epsilon}_t \rangle \left|\Lambda^{1\epsilon}_t\right|^{p-2}
\\
\leq & C\left|\Lambda^{1\epsilon}_t\right|^{p}+C\left|F(\Lambda^{1\epsilon}_t+\Gamma^{1}_t, V^{\epsilon}_t ) \right|^{p} \\
\leq & C\left|\Lambda^{1\epsilon}_t\right|^{p}+C\left(1+\left|\Gamma^{1 }_t\right|^{p}+\left|V^{\epsilon}_t\right|^{p}\right),
\end{align*}
for every $t \leq T$.\\
Then by the comparison Theorem we have 
\begin{equation}
\left |\Lambda^{1\epsilon}_t\right|^p \leq C |u|^p + C \int_0^t \left (1+\left |\Gamma^{1}_s\right|^p+  \left |V^{\varepsilon}_s\right|^p \right) ds,
\end{equation}
for every $t \leq T$.\\
Then by the definition of $\Lambda^{1\epsilon}$ and this last inequality it follows
\begin{equation}
\begin{aligned}
\left|U_t^\epsilon\right|^{p} 
\leq & C \left(1+|u|^{p}+\int_{0}^{t} \left (\left|\Gamma_s^1\right|^{p} +\left|V^{\epsilon}_s\right|^{p} \right) d s +\left|\Gamma_t^1\right|^{p} \right),
\end{aligned}
\end{equation}
for every $t \leq T$.\\
Now by Lemma \ref{lemma:estimate_stoch_convolution} we have:
\begin{equation}\label{eq:proof_estimate_for_u_with_integral_v}
\mathbb{E} \left[ \sup _{t \leq \tau}\left|U_t^\epsilon\right|^{p} \right] \leq C \left(1+|u|^{p}+\int_{0}^{\tau} \mathbb{E}\left|V^{\epsilon}_r\right|^{p} dr \right),
\end{equation}
for every $\tau \leq T$.\\
Now we proceed in a similar way to \cite[proof of Lemma 3.10]{tessitore_backward}, i.e. set 
$$ \Lambda^{2\epsilon}_t=e^{\frac{2 \delta}{\epsilon} t}\left (V_t^\epsilon-\Gamma^{2\epsilon}_t \right),$$
so that
$$d\Lambda^{2\epsilon}_t = \frac{2 \delta}{\epsilon} \Lambda^{2\epsilon}_t dt+ \frac{1}{\epsilon} A_2 \Lambda^{2\epsilon}_t  dt + \frac{1}{\epsilon} e^{\frac{2 \delta}{\epsilon} t} G(e^{-\frac{2 \delta}{\epsilon}}\Lambda^{2\epsilon}_t  + \Gamma^{2\epsilon}_t) dt.  $$
Now by \eqref{eq:strong_dissipativity_G} it follows
\begin{align*}
d |\Lambda^{2\epsilon}_t|^2 & = \frac{4 \delta}{\epsilon} |\Lambda^{2\epsilon}_t|^2 + \frac{2}{\epsilon} \langle A_2\Lambda^{2\epsilon}_t, \Lambda^{2\epsilon}_t \rangle dt + \frac{2}{\epsilon}  e^{2 \delta  t / \epsilon} \langle G(e^{-2 \delta t/ \epsilon} \Lambda^{2\epsilon}_t + \Gamma^{2\epsilon}_t), \Lambda^{2\epsilon}_t\rangle dt  \\
& \quad - \frac{2}{\epsilon}  e^{2 \delta  t / \epsilon} \langle G(\Gamma^{2\epsilon}_t), \Lambda^{2\epsilon}_t\rangle dt +  \frac{2}{\epsilon}  e^{2 \delta  t / \epsilon} \langle G(\Gamma^{2\epsilon}_t), \Lambda^{2\epsilon}_t\rangle dt \\
& \leq \frac{2}{\epsilon}  e^{2 \delta  t / \epsilon} \langle G( \Gamma^{2\epsilon}_t), \Lambda^{2\epsilon}_t\rangle dt .
\end{align*}
Now similarly to \cite[proof of Lemma 3.10]{tessitore_backward} fix $\theta>0$, differentiate $f(x)= \sqrt{x+ \theta }$ and use the previous inequality, then:
\begin{align*}
|\Lambda^{2\epsilon}_t |& \leq \sqrt{|\Lambda^{2\epsilon}_t |^2+ \theta } \\
&  \leq \sqrt{|v|^2+\theta}+\int_0^t \frac{1}{\epsilon} \frac{1}{\sqrt{|\Lambda^{2\epsilon}_s|^2+\theta}} e^{\frac{2 \delta}{\epsilon} s} |G(\Gamma^{2\epsilon}_s)||\Lambda^{2\epsilon}_s|  ds .
\end{align*}
Now by dominated convergence for $\theta \to 0$ we have:
$$|\Lambda^{2\epsilon}_t | \leq |v| + \int_0^t \frac{1}{\epsilon} e^{\frac{2 \delta}{\epsilon} s} |G( \Gamma^{2\epsilon}_s)|  ds,$$
so that by recalling the definition of $\Lambda^{2\epsilon}_t$ we have:
\begin{align*}|V_t^\epsilon| & \leq e^{-\frac{2 \delta}{\epsilon} t}  |v| + \int_0^t \frac{1}{\epsilon} e^{-\frac{2 \delta}{\epsilon} (t-s)} |G( \Gamma^{2\epsilon}_s)|  ds +| \Gamma^{2\epsilon}_t |\\
& = e^{-\frac{2 \delta}{\epsilon} t}  |v| + \int_0^{t/\epsilon} e^{-2 \delta (t / \epsilon - s)} |G( \Gamma^{2\epsilon}_{\epsilon s})|  ds +| \Gamma^{2\epsilon}_t |.
\end{align*}
Then, by Holder's inequality and Lemma \ref{lemma:estimate_stoch_convolution_2},  we  have:
\begin{align*}
\mathbb{E}|V_t^\epsilon|^p & \leq C  |v|^p + C \mathbb{E} \left | \int_0^{t / \epsilon} e^{-2 \delta (t / \epsilon - s)} |G( \Gamma^{2\epsilon}_{\epsilon s})|   ds \right |^p + C\mathbb{E} \left |  \Gamma^{2\epsilon}_t \right|^p \\
&  \leq C  |v|^p + C \left( \int_0^{t / \epsilon}  e^{-\frac{ 2\delta p}{ (p-1)} (t / \epsilon - s)} ds \right)^{p-1}  \int_0^{t / \epsilon}  e^{-2\delta p (t / \epsilon - s)} \mathbb{E}|G(\Gamma^{2\epsilon}_{\epsilon s})| ^p  ds+ C \mathbb{E} \left |  \Gamma^{2\epsilon}_t \right|^p \\
& = C  |v|^p + C   \left( \int_0^{t / \epsilon }   e^{-\frac{ 2\delta p}{ (p-1)} \sigma} d\sigma \right)^{p-1}  \int_0^{t } \frac{1}{\epsilon}  e^{-2\delta p (t  - s)/ \epsilon} \mathbb{E}|G(\Gamma^{2\epsilon}_{ s})| ^p ds +C  \mathbb{E}\left |  \Gamma^{2\epsilon}_t \right|^p \\
&\leq C  |v|^p + C   \left( \int_0^{t / \epsilon }   e^{-\frac{ 2\delta p}{ (p-1)} \sigma} d\sigma \right)^{p-1}  \int_0^{t } \frac{1}{\epsilon}  e^{-2\delta p (t  - s)/ \epsilon} (1+ \mathbb{E}|\Gamma^{2\epsilon}_s|^p ) ds +C  \mathbb{E}\left |  \Gamma^{2\epsilon}_t \right|^p \\
&\leq C  |v|^p + C    \left( \int_0^{t / \epsilon }   e^{-\frac{ 2\delta p}{ (p-1)} \sigma} d\sigma \right)^{p-1}   \int_0^{t } \frac{1}{\epsilon}  e^{-2\delta p (t  - s)/ \epsilon}  ds +C   \\
& \leq C(1+ |v|^p).
\end{align*}
This proves \eqref{eq:a_priori_estimate_V}.

Finally inserting \eqref{eq:a_priori_estimate_V} into \eqref{eq:proof_estimate_for_u_with_integral_v} we have \eqref{eq:a_priori_estimate_U}.
 \end{proof} 
\begin{lemma}\label{lemma:estimate_U_norm_alpha}
Let $0 < \alpha < 1/2 - \gamma$, $u \in D((-A_1)^\alpha)$, $v \in K$, then there exists $C=C(T,\alpha)>0$ such that
\begin{align*}
 \mathbb{E}\left [ \sup_{t\in [0,T]} \abs{U^\epsilon_t}_{\alpha}^2 \right] \leq C (1+|u|_\alpha^2+|v|^2),
 \end{align*}
 for every $\epsilon>0$.
 \end{lemma}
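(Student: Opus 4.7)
The plan is to apply $(-A_1)^\alpha$ to the mild formulation of $U^\epsilon$ and estimate each of the three resulting terms separately, reducing the statement to the already established estimates in lemmas \ref{lemma:estimate_stoch_convolution} and \ref{lemma:a_priori_estimates}. Starting from
\begin{equation*}
U_t^{\epsilon}=e^{A_1t}u+\int_0^te^{A_1(t-s)}F(U_s^{\epsilon},V_s^{\epsilon})\,ds+\Gamma^{1}_t,
\end{equation*}
I take the $D((-A_1)^\alpha)$-norm and use the triangle inequality to split the problem into three contributions.

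For the first term, since $(-A_1)^\alpha$ commutes with $e^{A_1 t}$ and hypothesis \ref{hp:operator1} gives $\|e^{A_1t}\|\leq 1$, I get $|e^{A_1t}u|_\alpha\leq |u|_\alpha$. For the deterministic convolution I use the standard analytic semigroup estimate $\|(-A_1)^\alpha e^{A_1 s}\|\leq C s^{-\alpha}$ (valid by hypothesis \ref{hp:operator1}) together with the sublinearity of $F$ inherited from hypothesis \ref{hp:coefficients}, obtaining
\begin{equation*}
\left|\int_0^te^{A_1(t-s)}F(U_s^{\epsilon},V_s^{\epsilon})\,ds\right|_{\alpha}\leq C\int_0^t(t-s)^{-\alpha}\bigl(1+|U_s^\epsilon|+|V_s^\epsilon|\bigr)\,ds.
\end{equation*}
Since $2\alpha<1$, Cauchy--Schwarz bounds the square of this quantity by $C_T\int_0^T(1+|U_s^\epsilon|^2+|V_s^\epsilon|^2)\,ds$, uniformly in $t\in[0,T]$. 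For the stochastic convolution, lemma \ref{lemma:estimate_stoch_convolution} applies directly with $\theta=\alpha$ and $p=2$ (which is legitimate because $\alpha<1/2-\gamma$), giving $\mathbb{E}\sup_{t\leq T}|\Gamma^{1}_t|_\alpha^2\leq C$.

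Putting these pieces together and taking expectation of the supremum I arrive at
\begin{equation*}
\mathbb{E}\sup_{t\in[0,T]}|U_t^\epsilon|_\alpha^2\leq C|u|_\alpha^2+C_T\int_0^T\mathbb{E}\bigl(1+|U_s^\epsilon|^2+|V_s^\epsilon|^2\bigr)\,ds+C.
\end{equation*}
Lemma \ref{lemma:a_priori_estimates} with $p=2$ controls the integral by $C_T(1+|u|^2+|v|^2)$, and since $\inf_k\alpha_k>0$ (forced by the convergence of $\sum_k\alpha_k^{-\mu}$ in hypothesis \ref{hp:operator1}), the operator $(-A_1)^{-\alpha}$ is bounded on $H$, so $|u|\leq C|u|_\alpha$. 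This yields the desired estimate $C(1+|u|_\alpha^2+|v|^2)$.

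I do not expect any serious obstacle: the proof is a straightforward bootstrap from the $L^p$-estimates of lemma \ref{lemma:a_priori_estimates} to the fractional $D((-A_1)^\alpha)$-norm, enabled by the restriction $\alpha<1/2-\gamma$, which makes both the analytic semigroup bound on the deterministic convolution and the regularity of the stochastic convolution usable. The only mild subtlety is verifying that the boundedness of $(-A_1)^{-\alpha}$ is available from hypothesis \ref{hp:operator1} so that the $H$-norm on the right-hand side of lemma \ref{lemma:a_priori_estimates} can be absorbed into $|u|_\alpha^2$.
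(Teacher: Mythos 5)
Your proposal is correct and follows essentially the same route as the paper: split the mild formulation into the three terms, use $\|(-A_1)^\alpha e^{A_1 t}\|\leq Ct^{-\alpha}$ with Cauchy--Schwarz and lemma \ref{lemma:a_priori_estimates} for the deterministic convolution, and lemma \ref{lemma:estimate_stoch_convolution} with $\theta=\alpha$ for the stochastic convolution. Your extra remark that $|u|\leq C|u|_\alpha$ (since the convergence of $\sum_k\alpha_k^{-\mu}$ forces $\inf_k\alpha_k>0$) is a small step the paper leaves implicit, and it is handled correctly.
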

 \begin{proof}
 Consider for $t \leq T$ 
\begin{align}\label{eq:mild_U}
U_t^{\epsilon}=e^{A_1t}u +\int_0^te^{A_1(t-s)} F(U_s^{\epsilon},V_s^{\epsilon})ds +\int_0^te^{A_1(t-s)} dW^{Q_1}_s.
\end{align}
 First as $u \in D((-A_1)^\alpha)$ we have:
 $$\sup_{t \leq T}|e^{A_1 t}u|_\alpha^2=\sup_{t \leq T}|e^{A_1 t}(-A_1)^\alpha u|^2 \leq C |u|_\alpha^2.$$
 Moreover by \eqref{eq:bound_-A_exp At} and by Lemma \ref{lemma:a_priori_estimates} we have:
\begin{align*}
  \mathbb{E}\left [ \sup_{t\in [0,T]}  \left | \int_0^t e^{A_1(t-s)} F(U_s^{\epsilon},V_s^{\epsilon}) ds \right|_\alpha^2  \right] & \leq  \mathbb{E}\left [ \sup_{t\in [0,T]}  \left | \int_0^t (t-s)^{-\alpha} \left | F(U_s^{\epsilon},V_s^{\epsilon}) \right |  ds \right|^2  \right]  \\
&   \leq C    \mathbb{E}\left [ \sup_{t\in [0,T]}  \int_0^t  (t-s)^{-2\alpha} ds \int_0^t \left  | F(U_s^{\epsilon},V_s^{\epsilon}) \right |^2 ds \right] \\
& \leq C \int_0^T \left(1+ \mathbb{E}| U_s^{\epsilon}|^2 +\mathbb{E} |V_s^{\epsilon} |^2 \right) ds \\
& \leq C (1+|u|^2+|v|^2).
\end{align*}
Finally by Lemma \ref{lemma:estimate_stoch_convolution} 
we have:
\begin{equation*}
\mathbb{E} \left[  \sup_{t\in [0,T]} \abs{ \Gamma^1_t}^2_\alpha \right]  < +\infty.
\end{equation*}
By considering \eqref{eq:mild_U}, calculating  $ \mathbb{E}\left [ \sup_{t\in [0,T]} \abs{U^\epsilon_t}_{\alpha}^2 \right] $ and using the last three inequalities we have the thesis.
 \end{proof}
 \begin{lemma}\label{lemma:continuity_from_time_U}
 Let $0 < \alpha < 1/2 - \gamma$, $u \in D((-A_1)^\alpha)$, $v \in K$, then there exists $C=C(T,\alpha)>0$ such that
 \begin{equation*}
 \mathbb{E}\left | U_{t+h}^\epsilon - U_{t}^\epsilon \right|^2 \leq C |h|^{2 \alpha}  (1+|u|_\alpha^2+|v|^2),
 \end{equation*}
 for every $\epsilon>0$, $0 \leq t \leq T$, $h\geq 0$ such that $t+h \leq T$.
 \end{lemma}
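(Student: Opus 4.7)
The plan is to start from the mild formulation of $U^\epsilon$ and exploit the semigroup property to write
\begin{equation*}
U_{t+h}^{\epsilon}-U_t^{\epsilon}=(e^{A_1 h}-I)U_t^{\epsilon}+\int_t^{t+h}e^{A_1(t+h-s)}F(U_s^{\epsilon},V_s^{\epsilon})\,ds+\int_t^{t+h}e^{A_1(t+h-s)}\,dW^{Q_1}_s,
\end{equation*}
and then estimate the three terms separately in $L^2(\Omega)$.

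For the first term I would use the standard analytic semigroup bound $\|(e^{A_1h}-I)(-A_1)^{-\alpha}\|\leq C h^{\alpha}$ valid under hypothesis \ref{hp:operator1}, so that $|(e^{A_1h}-I)U_t^{\epsilon}|\leq C h^{\alpha}|U_t^{\epsilon}|_{\alpha}$. Taking squares and expectations, lemma \ref{lemma:estimate_U_norm_alpha} (whose hypothesis $u\in D(-A_1^\alpha)$ is assumed here) gives a bound of order $C h^{2\alpha}(1+|u|_{\alpha}^{2}+|v|^{2})$. For the deterministic drift term I would pull the contraction $\|e^{A_1(t+h-s)}\|\leq 1$ out, apply Cauchy--Schwarz on the time integral and use the sublinearity of $F$ together with the a priori moment estimate from lemma \ref{lemma:a_priori_estimates} to obtain a bound of order $C h^{2}(1+|u|^{2}+|v|^{2})$; since $h\leq T$ and $2\alpha<1<2$, this is dominated by $C h^{2\alpha}(1+|u|_\alpha^{2}+|v|^{2})$.

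The most delicate piece, which is also the main (mild) obstacle, is the stochastic convolution over $[t,t+h]$. Here I would appeal to the Itô isometry in Hilbert space together with hypothesis \ref{hp:hilbert_schmidt}:
\begin{equation*}
\mathbb{E}\left|\int_t^{t+h}e^{A_1(t+h-s)}\,dW^{Q_1}_s\right|^{2}=\int_t^{t+h}\norm{e^{A_1(t+h-s)}Q_1^{1/2}}_{\mathcal{L}_2}^{2}\,ds\leq C\int_0^{h}\sigma^{-2\gamma}\,d\sigma\leq C\,h^{1-2\gamma}.
\end{equation*}
The condition $\alpha<1/2-\gamma$ translates exactly into $2\alpha<1-2\gamma$, so using $h\leq T$ we can absorb this into $C h^{2\alpha}$. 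Summing the three contributions yields the claimed estimate, with a constant depending only on $T$ and $\alpha$.
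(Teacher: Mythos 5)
Your proposal is correct and follows essentially the same argument as the paper: the same mild-solution decomposition into the three terms $(e^{A_1h}-I)U_t^\epsilon$, the drift integral, and the stochastic convolution, estimated respectively via Lemma \ref{lemma:estimate_U_norm_alpha}, Lemma \ref{lemma:a_priori_estimates}, and the It\^o isometry with Hypothesis \ref{hp:hilbert_schmidt}, and concluded by noting $2\alpha< \min(2,1-2\gamma)$. No gaps.
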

\begin{proof}
For $0 \leq t \leq T$, $h\geq 0$ such that $t+h \leq T$ we have
\begin{equation}
U^\epsilon_{t+h}-U^\epsilon_{t}=(e^{A_1 h}-I) U^\epsilon_{t} + \int_t^{t+h} e^{A_1(t+h-s)}F(U^\epsilon_{s},V^\epsilon_{s}) ds + \int_t^{t+h} e^{A_1(t+h-s)} dW^{Q_1}_s.
\end{equation}
Consider the first term on the right-hand-side, as $u \in D((-A_1)^\alpha)$ by \eqref{eq:holder_continuity_semigroup} and by Lemma \ref{lemma:estimate_U_norm_alpha} we have:
\begin{align*}
\mathbb{E} \left |(e^{A_1 h}-I) U^\epsilon_{t} \right|^2 & \leq C |h|^{2 \alpha} \mathbb{E} \left |U^\epsilon_{t} \right|^2_\alpha \\
& \leq C |h|^{2 \alpha}  (1+|u|_\alpha^2+|v|^2).
\end{align*} 
Consider now the second term on the right-hand-side, then by Lemma \ref{lemma:a_priori_estimates} we have:
\begin{align*}
\mathbb{E} \left |\int_t^{t+h} e^{A_1(t+h-s)}F(U^\epsilon_{s},V^\epsilon_{s}) ds   \right |^2 & \leq 
C |h| \int_t^{t+h} \left (1+\mathbb{E}  |U^\epsilon_{s}|^2+\mathbb{E} |V^\epsilon_{s}|^2 \right ) ds \\
& \leq C |h|^2  (1+|u|^2+|v|^2).
\end{align*} 
Finally for the third term on the right-hand-side by Ito's isometry and Hypothesis \ref{hp:hilbert_schmidt} we have:
\begin{align*}
\mathbb{E} \left |  \int_t^{t+h} e^{A_1(t+h-s)} dW_s^{Q_1} \right |^2 & =\int_t^{t+h} \norm{ e^{A_1(t+h-s)} Q_1^{1/2}}_{\mathcal{L}_2}^2 ds\\
& \leq C \int_t^{t+h} |t+h-s|^{-2\gamma} ds \\
& = C |h|^{1-2 \gamma}.
\end{align*} 
As by assumption $2 \alpha \wedge 2 \wedge  (1-2 \gamma)= 2\alpha$ then we have the thesis.
\end{proof}
\section{Fast motion}\label{sec:fast_motion}
In this section we study some classical properties of the fast motion. Consider 
\begin{equation}\label{eq:fast_motion_fixed_u}
\begin{cases}
dv^v_s=A_2v^v_sds+ G(v^v_s)ds+d w^{Q_2}_s, \\
v^v_0=v \in K,
\end{cases}
\end{equation}
for every $s \geq 0$ and for some  $Q_2$-Wiener process $w_s^{Q_2}$.\\
First define the semigroup related to the  fast motion by
\begin{equation}
P_s \phi(v) =\mathbb{E} \left[ \phi(v_s^{v})\right],
\end{equation}
for every $\phi \in B_B(K)$, $s \geq 0$.\\
Next recall that $\delta$ was defined by \eqref{eq:definition_delta}, then we have:
\begin{lemma}\label{lemma:continuity_fast_motion_fast_comp}
\begin{equation*}
\mathbb{E}\left[ \left|v_s^{v_1}-v_s^{v_2} \right|^2\right] \leq e^{-4\delta s} |v_1-v_2|^2,
\end{equation*}
for every $s \geq 0$, $v_1,v_2 \in K$. 
\end{lemma}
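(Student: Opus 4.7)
The plan is to note that since both $v_s^{u,v_1}$ and $v_s^{u,v_2}$ are driven by the same Wiener process $w^{Q_2}$ with the same additive noise, the difference process $D_s := v_s^{u,v_1} - v_s^{u,v_2}$ satisfies a pathwise equation with no stochastic term, namely
\begin{equation*}
\frac{d}{ds} D_s = A_2 D_s + G(u, v_s^{u,v_1}) - G(u, v_s^{u,v_2}), \qquad D_0 = v_1 - v_2.
\end{equation*}
Since the noise has cancelled, the problem reduces to a deterministic dissipativity estimate for each $\omega$.

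I would then differentiate $|D_s|^2$ and apply the strong dissipativity inequality \eqref{eq:strong_dissipativity_G} that was already derived from hypothesis \ref{hp:coefficients}. Formally,
\begin{equation*}
\frac{1}{2}\frac{d}{ds}|D_s|^2 = \langle A_2 D_s + G(u, v_s^{u,v_1}) - G(u, v_s^{u,v_2}),\, D_s\rangle \leq -2\delta |D_s|^2,
\end{equation*}
which by Gronwall's lemma yields $|D_s|^2 \leq e^{-4\delta s}|v_1-v_2|^2$ pathwise, and then taking expectations gives the claim.

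The main technical subtlety is that $v_s^{u,v}$ is only a mild solution, so the chain rule computation above is not directly justified: $D_s$ need not belong to $D(A_2)$ and we cannot in general pair $A_2 D_s$ with $D_s$. The standard way around this, which I would invoke, is to work with the Yosida approximations $A_{2,n} = n A_2(n - A_2)^{-1}$ (or equivalently truncate $G$ and use a regularizing approximation), prove the bound for the approximate process $D_s^n$ where all objects are classical, and then pass to the limit using the usual convergence properties of Yosida approximations in the mild formulation. The dissipativity inequality passes to the limit since the strongly dissipative estimate \eqref{eq:strong_dissipativity_G} depends only on the Lipschitz constant of $G$ and the dissipativity of $A_2$, both preserved (in the appropriate sense) under Yosida approximation. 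Taking expectations at the end completes the proof.
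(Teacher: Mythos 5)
Your proposal is correct and follows essentially the same route as the paper: differentiate $|\rho_s|^2$ for the difference process, apply the strong dissipativity estimate \eqref{eq:strong_dissipativity_G}, and conclude by Gronwall/comparison before taking expectations. The only difference is that you explicitly justify the formal chain rule via Yosida approximation, a step the paper performs formally without comment.
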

\begin{proof}
Define $\rho_s=v_s^{v_1}-v_s^{v_2}$, then by \eqref{eq:strong_dissipativity_G} we have:
\begin{align*}
\frac{d}{ds} |\rho_s|^2 & = 2 \langle A_2 \rho_s,\rho_s \rangle +2\langle G(v_s^{v_1})-G(v_s^{v_2}),\rho_s \rangle \\
& \leq - 4 \delta |\rho_s|^2 ,
\end{align*}
for every $s \geq 0$, $v_1,v_2 \in K$.\\
Then by taking the expectation and applying the comparison Theorem we have the thesis.
\end{proof}
Next we can show:
\begin{lemma}\label{lemma:moments_fast_motion}
Let $p\geq 1$, then there exists $C=C(p)>0$ such that:
\begin{equation*}
\mathbb{E}\left[ \left|v_s^{v} \right|^p\right] \leq C  (1+e^{-\delta p s} |v|^p),
\end{equation*}
for every $s \geq 0$, $v \in K$. 
\end{lemma}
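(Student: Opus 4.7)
The proof closely parallels the estimate for $V^\epsilon_t$ established inside the proof of lemma \ref{lemma:a_priori_estimates}, but is actually simpler since the slow component is frozen at a deterministic $u$ and there is no $\epsilon$ scaling. The plan is to exploit the strong dissipativity \eqref{eq:strong_dissipativity_G} to absorb the linear and drift terms, reducing matters to an estimate on the stochastic convolution.

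First I would introduce the stochastic convolution
$$\tilde\Gamma_s=\int_0^s e^{A_2(s-r)}dw^{Q_2}_r,$$
and observe that exactly the same computation as in lemma \ref{lemma:estimate_stoch_convolution_2} (taking $\epsilon=1$) yields $\sup_{s\ge 0}\mathbb{E}|\tilde\Gamma_s|^p\leq C$ for any $p\ge 2$; for $1\leq p<2$ this follows from Hölder. Next, imitating the trick used for $\Lambda^{2\epsilon}$ in the proof of lemma \ref{lemma:a_priori_estimates}, I would set
$$\Lambda_s=e^{2\delta s}\bigl(v^{u,v}_s-\tilde\Gamma_s\bigr),$$
compute $d\Lambda_s$, add and subtract $G(u,\tilde\Gamma_s)$, and invoke the strong dissipativity \eqref{eq:strong_dissipativity_G} with $y_1=\tilde\Gamma_s$, $y_2=e^{-2\delta s}\Lambda_s+\tilde\Gamma_s$ to obtain
$$\frac{d}{ds}|\Lambda_s|^2\leq 2e^{2\delta s}\langle G(u,\tilde\Gamma_s),\Lambda_s\rangle.$$

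Then, differentiating $\sqrt{|\Lambda_s|^2+\theta}$ and letting $\theta\downarrow 0$ via dominated convergence exactly as in lemma \ref{lemma:a_priori_estimates}, I get
$$|\Lambda_s|\leq |v|+\int_0^s e^{2\delta r}\,|G(u,\tilde\Gamma_r)|\,dr,$$
which, recalling the definition of $\Lambda_s$, gives the pathwise bound
$$|v^{u,v}_s|\leq e^{-2\delta s}|v|+\int_0^s e^{-2\delta(s-r)}|G(u,\tilde\Gamma_r)|\,dr+|\tilde\Gamma_s|.$$

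To finish, I would raise to the $p$-th power, applying Hölder's inequality to the convolution integral (splitting $e^{-2\delta(s-r)}=e^{-2\delta(s-r)(p-1)/p}\cdot e^{-2\delta(s-r)/p}$) to obtain
$$\left(\int_0^s e^{-2\delta(s-r)}|G(u,\tilde\Gamma_r)|\,dr\right)^p\leq C\int_0^s e^{-2\delta(s-r)}|G(u,\tilde\Gamma_r)|^p\,dr.$$
Using the sublinear growth of $G$ that follows from hypothesis \ref{hp:coefficients}, $|G(u,\tilde\Gamma_r)|\leq C(1+|u|+|\tilde\Gamma_r|)$, together with $\sup_r\mathbb{E}|\tilde\Gamma_r|^p\leq C$, I can then take expectations and bound the integral term by $C(1+|u|^p)$. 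Combined with $C e^{-2\delta p s}|v|^p\leq C e^{-\delta p s}|v|^p$ and $\mathbb{E}|\tilde\Gamma_s|^p\leq C$, this yields the desired inequality. I do not anticipate any serious obstacle: the only delicate point is the $\sqrt{|\Lambda_s|^2+\theta}$ regularization, which is routine once one notices that the proof of lemma \ref{lemma:a_priori_estimates} has already handled it.
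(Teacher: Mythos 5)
Your proposal is correct and follows essentially the same strategy as the paper: subtract the stochastic convolution $\tilde\Gamma_s$, whose moments are uniformly bounded exactly as in lemma \ref{lemma:estimate_stoch_convolution_2}, and exploit the strong dissipativity \eqref{eq:strong_dissipativity_G} of $A_2+G(u,\cdot)$. The only (harmless) difference is in handling the resulting differential inequality: the paper differentiates $|\rho_s|^p$ directly and applies Young's inequality and the comparison theorem for $p\geq 2$, while you first derive a pathwise bound via the $e^{2\delta s}$-weighting and $\sqrt{|\cdot|^2+\theta}$ regularization and then take moments with H\"older, which works equally well.
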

\begin{proof}
Define $$\tilde \Gamma_s^{Q_2}=\int_0^s e^{A_2(s-r)}d w_r^{Q_2}.$$
First by Burkholder's inequality and Hypotheses \ref{hp:operator2} and \ref{hp:hilbert_schmidt} similarly to what is done for Lemma \ref{lemma:estimate_stoch_convolution_2} we have:
\begin{align}\label{eq:est_stoch_conv_fast_motion_slowed}
\mathbb{E}\left [ \abs{\tilde \Gamma^{Q_2}_s}^p \right] 
&\leq C \left( \int_0^{s} \norm{e^{A_2 (s-r)} Q_2^{1/2}}_{\mathcal{L}_2}^2 dr \right)^{p/2} \nonumber \\
& \leq C \left( \int_0^{s}  e^{-\lambda (s-r)}  \left(\frac{s-r}{2} \wedge 1 \right )^{-2 \gamma} d r \right)^{p/2} \leq C ,
\end{align}
for every $s \geq 0$.\\
Now set $\rho_s=v^{v}_s-\tilde \Gamma_s^{Q_2}$. Then for $p\geq 2$ we have:
\begin{align*}
\frac{1}{p}\frac{d}{ds} |\rho_s|^p & =  \langle A_2 \rho_s,\rho_s \rangle |\rho_s|^{p-2} +\langle G(\rho_s+\tilde \Gamma_s^{Q_2})-G(\tilde \Gamma_s^{Q_2}),\rho_s \rangle |\rho_s|^{p-2}  + \langle G(\tilde \Gamma_s^{Q_2}),\rho_s \rangle |\rho_s|^{p-2}   \\
& \leq - 2 \delta |\rho_s|^p+ C (1+|\tilde \Gamma_s^{Q_2}|)|\rho_s|^{p-1} \\
&\leq -  \delta |\rho_s|^p+ C (1+|\tilde \Gamma_s^{Q_2}|^p) .
\end{align*}
Then by the comparison Theorem and \eqref{eq:est_stoch_conv_fast_motion_slowed} we have:
\begin{align*}
\mathbb{E}\left|v_s^{u,v} \right|^p&  \leq C \mathbb{E} |\rho_s|^p + C \mathbb{E} |\tilde \Gamma_s^{Q_2}|^p \\
& \leq C \left [e^{-\delta p s}|v|^p + \int_0^s e^{-\delta p (s-r)} \left ( 1+\mathbb{E} |\tilde \Gamma_r^{Q_2}|^p \right)  dr + \mathbb{E} |\tilde \Gamma_s^{Q_2}|^p  \right ]\\
& \leq C \left (e^{-\delta p s}|v|^p +1\right).
\end{align*}
\end{proof}
Now by \cite[Theorem 6.3.3]{daprato_ergodicity} there exists a unique invariant measure $\mu$ for the semigroup $P_t$. Moreover we have:
\begin{lemma}\label{lemma:moments_invariant_measure}
We have:
\begin{equation}
\int_K |z| \mu (dz) < \infty.
\end{equation}
\end{lemma}
\begin{proof}
Fix $N>0$, then by definition of invariant measure and Lemma \ref{lemma:moments_fast_motion} we have for every $s>0$
\begin{align*}
\int_K (|z|\wedge N) \mu (dz)  & = \int_K  \mathbb{E} \left [ |v_s^{z}| \wedge N \right]   \mu (dz) \leq \int_K \left \{  \left  [ C \left ( 1 +  e^{-\delta s}  |z|\right) \right ] \wedge N \right \} \mu (dz)\\
&  \leq \int_K  C \left ( 1\wedge N +  e^{-\delta s}  |z|\wedge N \right)  \mu (dz)  \leq C \left ( 1 +  e^{-\delta s} \int_K  (|z|\wedge N)  \mu (dz) \right),
\end{align*}
 where we have used the fact that $(a+b)\wedge c \leq a \wedge c + b \wedge c$ for every $a,b,c \geq 0.$
By choosing $s>0$ large enough we have 
\begin{align*}
\int_K (|z|\wedge N) \mu (dz)  \leq C,
\end{align*}
for some $C>0$ independent of $N$. Letting $N \to \infty$ by the monotone convergence Theorem we have the result.
\end{proof}
Next we study the convergence to equlibrium of the semigroup of the fast motion, i.e. we prove:
\begin{lemma}\label{lemma:convergence_equilibrium}
There exists $C>0$ such that
\begin{align*}
\left |P_s \phi(v)-\int_K \phi(z) \mu(dz) \right|  \leq C \left[ \phi\right]_{Lip}e^{-2\delta s} (1+|v|),
\end{align*}
for every $s \geq 0$, $v \in K$, $\phi \in Lip(K)$.\\
Moreover there exists $C>0$ such that
\begin{align*}
\left |P_s \phi(v)-\int_K \phi(z) \mu(dz) \right|  \leq C |\phi|_{\infty} e^{-\delta s} (s \wedge 1)^{-1/2}  (1+|v|) ,
\end{align*}
for every $s > 0$, $\phi \in B_B(K)$, $v \in K$.
\end{lemma}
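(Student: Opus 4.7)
The plan is to prove the two bounds separately, the Lipschitz one directly from the contraction estimate and the invariance of $\mu^u$, and the bounded one by combining the Lipschitz bound with a Bismut--Elworthy--Li-type gradient estimate that exploits hypothesis \ref{hp:covariance_inverse}.

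For the first bound, I would exploit the invariance of $\mu^u$ under $P_s^u$ to write
\begin{equation*}
P_s^u\phi(v) - \int_K \phi(z)\,\mu^u(dz) = \int_K \bigl(P_s^u\phi(v) - P_s^u\phi(z)\bigr)\,\mu^u(dz) = \int_K \mathbb{E}\bigl[\phi(v_s^{u,v})-\phi(v_s^{u,z})\bigr]\,\mu^u(dz).
\end{equation*}
Taking absolute values, using the Lipschitz property of $\phi$, Jensen's inequality, and lemma \ref{lemma:continuity_fast_motion_fast_comp} yields $\mathbb{E}|v_s^{u,v}-v_s^{u,z}| \leq e^{-2\delta s}|v-z|$. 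Integrating against $\mu^u$ and applying lemma \ref{lemma:moments_invariant_measure} to control $\int_K |z|\,\mu^u(dz) \leq C(1+|u|)$ gives the first claim.

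For the second bound, the key ingredient is a smoothing estimate of the form
\begin{equation*}
[P_t^u\phi]_{\mathrm{Lip}} \leq C\,|\phi|_\infty\,(t\wedge 1)^{-1/2} \quad \text{for all } t>0,\ \phi\in B_B(K),
\end{equation*}
which is the standard Bismut--Elworthy--Li formula: differentiating $v\mapsto P_t^u\phi(v)$ one obtains a representation of $\nabla P_t^u\phi(v)\cdot h$ as $\frac{1}{t}\mathbb{E}[\phi(v_t^{u,v})\int_0^t \langle Q_2^{-1/2}\eta_r^h, dw_r^{Q_2}\rangle]$, where $\eta^h$ is the first variation process; the invertibility of $Q_2$ (hypothesis \ref{hp:covariance_inverse}) combined with hypothesis \ref{hp:hilbert_schmidt} and the contractivity of $\eta^h$ (coming from \eqref{eq:strong_dissipativity_G}) gives the $t^{-1/2}$ behaviour for small $t$, while for $t>1$ the bound follows by writing $P_t^u = P_1^u P_{t-1}^u$ and using that $P_t^u$ is a contraction on $B_B(K)$. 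Then I would split $P_s^u\phi(v) = P_{s/2}^u(P_{s/2}^u\phi)(v)$, apply the first bound already proved to the Lipschitz function $\psi := P_{s/2}^u\phi$ at time $s/2$ (noting that $\int_K \psi\,d\mu^u = \int_K \phi\,d\mu^u$ by invariance), and obtain
\begin{equation*}
\left|P_s^u\phi(v) - \int_K \phi(z)\,\mu^u(dz)\right| \leq C\,[P_{s/2}^u\phi]_{\mathrm{Lip}}\,e^{-\delta s}(1+|u|+|v|) \leq C\,|\phi|_\infty\,e^{-\delta s}(s\wedge 1)^{-1/2}(1+|u|+|v|).
\end{equation*}

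The main obstacle is the gradient estimate: justifying the Bismut--Elworthy--Li formula in this semilinear SPDE setting rigorously requires first noting that $P_s^u\phi$ is differentiable (by standard results on Fréchet differentiability of solutions in the initial condition, since $G$ is Lipschitz and $Q_2^{-1/2}$ acts on the noise) and then controlling the stochastic integral $\mathbb{E}|\int_0^t \langle Q_2^{-1/2}\eta_r^h, dw_r^{Q_2}\rangle|^2 = \int_0^t \mathbb{E}|Q_2^{-1/2}\eta_r^h|^2\,dr$. The contractivity $|\eta_r^h| \leq e^{-2\delta r}|h|$ handles the decay, but getting the $t^{-1/2}$ singularity at $t\to 0$ uses the averaging $1/t$ in the Elworthy--Li randomization. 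Everything else (the splitting argument, invariance, and application of the Lipschitz bound) is routine once this estimate is in hand.
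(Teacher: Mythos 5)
Your proposal is correct and follows essentially the same route as the paper: the first bound via invariance of $\mu^u$, the contraction estimate of lemma \ref{lemma:continuity_fast_motion_fast_comp} and lemma \ref{lemma:moments_invariant_measure}, and the second via the semigroup splitting $P_s^u=P_{s/2}^u P_{s/2}^u$ combined with a Bismut--Elworthy gradient estimate $\sup_u|DP_t^u\phi|_\infty\leq C(t\wedge1)^{-1/2}|\phi|_\infty$. The only difference is that the paper obtains this gradient estimate by citing \cite[theorem 9.32]{daprato_stochastic_eq} (checking its hypothesis 9.1 holds uniformly in $u$), whereas you sketch its derivation directly.
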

\begin{proof}
First for every $\phi \in Lip(K)$ by Lemma \ref{lemma:continuity_fast_motion_fast_comp} we have:
\begin{equation}\label{eq:continuity_semigroup_fast_comp}
\begin{aligned}
\left|P_{s} \phi(v_2)-P_{s} \phi(v_1)\right| & \leq[\phi]_{\text {Lip }} \mathbb{E}\left[ \left|v^{v_2}_s-v^{ v_1}_s\right| \right] \\
& \leq[\phi]_{\text {Lip }} e^{-2\delta s}|v_2-v_1|,
\end{aligned}
\end{equation}
for every $s\geq0$, $v_1,v_2 \in K$.\\
Now let $s>0$, by definition of invariant measure, \eqref{eq:continuity_semigroup_fast_comp} and Lemma  \ref{lemma:moments_invariant_measure} we have:
\begin{align}\label{eq:proof_convergence_equilibrium}
\left |P_s \phi(v)-\int_K \phi(z) \mu(dz) \right| & = \left |\int_K \left ( P_{s} \phi(v)-P_{s} \phi(z) \right) \mu(dz)\right | \nonumber \\
& \leq  \left[ \phi\right]_{Lip} e^{-2\delta s} \int_{K} |v-z| \mu(dz) \nonumber\\
& \leq C \left[ \phi\right]_{Lip} e^{-2\delta s} (1+|v|) ,
\end{align}
for every $v \in K$, $\phi \in Lip(K)$ so that we have the first inequality.\\
Now thanks to Hypothesis \ref{hp:covariance_inverse} we can apply \cite[Theorem 9.32]{daprato_stochastic_eq} to have the Bismut-Elworthy formula:
\begin{equation}\label{eq:est_gradient_semigroup}
\sup_{u \in H} |D P_s \phi |_{\infty} \leq C (s \wedge 1)^{-1/2}  |\phi |_{\infty},
\end{equation}
for every $s>0$, $\phi \in B_B(K)$.\\
Now by the semigroup property, the regularizing property of the semigroup \eqref{eq:est_gradient_semigroup} and by \eqref{eq:continuity_semigroup_fast_comp}  we have:
\begin{align}\label{eq:continuity_semigroup_fast_comp_phi_bounded}
\left|P_{s} \phi(v_2)-P_{s} \phi(v_1)\right| &= \left|P_{s/2} \left (P_{s/2} \phi \right)  (v_2) -P_{s/2} \left (P_{s/2} \phi \right)(v_1) \right| \nonumber \\
&  \leq\left[P_{s / 2} \phi\right]_{\text{Lip}} e^{-2\delta \frac{s}{2}}|v_2-v_1| \nonumber \\
 & \leq C |\phi|_{\infty}(s \wedge 1)^{-\frac{1}{2}} e^{-\delta s}|v_2-v_1|,
\end{align}
for every $s>0$, $v_1,v_2 \in K$.\\
Finally similarly to before by \eqref{eq:continuity_semigroup_fast_comp_phi_bounded} for $s>0$ we have:
\begin{align*}\label{eq:proof_convergence_equilibrium_phi_bounded}
\left |P_s \phi(v)-\int_K \phi(z) \mu(dz) \right| & = \left |\int_K \left ( P_{s} \phi(v)-P_{s} \phi(z) \right) \mu(dz)\right | \\
& \leq C |\phi|_{\infty}(s \wedge 1)^{-\frac{1}{2}} e^{-\delta s} \int_{K} |v-z| \mu(dz) \\
& \leq C|\phi|_{\infty}(s \wedge 1)^{-\frac{1}{2}} e^{-\delta s} (1+|v|) ,
\end{align*}
for every $v \in K$, $\phi \in B_B(K)$.
\end{proof}
Now we study the mixing properties of the semigroup of the fast motion. To this purpose define for $0 \leq s \leq t \leq \infty$, $v \in K$
$$\mathcal{H}_s^{t}(v)= \sigma (v_r^{v}, 0 \leq s \leq r \leq t ).$$
Then a classical consequence of Lemmas \ref{lemma:moments_fast_motion}, \ref{lemma:convergence_equilibrium} is the following mixing result whose proof is the same of \cite[Lemma 3.2]{Cerrai_normal_dev} and is reported in the appendix for completeness.
\begin{lemma}\label{lemma:mixing_1}
There exists $C>0$ such that
\begin{equation*}
\sup \left \{  \left |\mathbb{P} (B_1 \cap B_2)-\mathbb{P} (B_1) \mathbb{P} (B_2) \right | : B_1 \in \mathcal{H}_0^{t}(v), B_2 \in \mathcal{H}_{t+s}^{\infty}(v) \right \} \leq C e^{-\delta s} s^{-1/2} (1+|v|),
\end{equation*}
for every $0 \leq s \leq t$, $v \in K$.
\end{lemma}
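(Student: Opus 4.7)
My plan is to reduce the mixing estimate to the convergence to equilibrium established in the previous lemma, via the Markov property, so that the real work has already been done.

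First, given $B_2\in\mathcal{H}^{\infty}_{t+s}(u,v)$, the strong Markov property at time $t+s$ lets me write
\begin{equation*}
\mathbb{P}(B_2\mid\mathcal{F}_{t+s})=h(v^{u,v}_{t+s})
\end{equation*}
for some Borel function $h\colon K\to[0,1]$ (depending on $B_2$, but with $|h|_\infty\le 1$). Then, conditioning at time $t$ and using the Markov property together with the definition of $P^{u}_{s}$,
\begin{equation*}
\mathbb{P}(B_2\mid\mathcal{F}_{t})=\mathbb{E}\bigl[h(v^{u,v}_{t+s})\mid\mathcal{F}_t\bigr]=P^{u}_{s}h(v^{u,v}_{t}).
\end{equation*}
Using this and denoting $\bar h:=\int_K h(z)\,\mu^u(dz)$, I write
\begin{align*}
\mathbb{P}(B_1\cap B_2)-\mathbb{P}(B_1)\mathbb{P}(B_2)
&=\mathbb{E}\!\left[1_{B_1}\bigl(P^{u}_{s}h(v^{u,v}_{t})-\bar h\bigr)\right]
-\mathbb{P}(B_1)\,\mathbb{E}\!\left[P^{u}_{s}h(v^{u,v}_{t})-\bar h\right],
\end{align*}
so that
\begin{equation*}
\bigl|\mathbb{P}(B_1\cap B_2)-\mathbb{P}(B_1)\mathbb{P}(B_2)\bigr|\le 2\,\mathbb{E}\bigl|P^{u}_{s}h(v^{u,v}_{t})-\bar h\bigr|.
\end{equation*}

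Second, I apply the bounded-test-function part of lemma \ref{lemma:convergence_equilibrium}, which gives exactly the regularising factor $(s\wedge 1)^{-1/2}$ coming from the Bismut--Elworthy estimate \eqref{eq:est_gradient_semigroup}. With $\phi=h$ (so $|h|_\infty\le 1$) and evaluation point $v^{u,v}_{t}$,
\begin{equation*}
\bigl|P^{u}_{s}h(v^{u,v}_{t})-\bar h\bigr|\le C\,e^{-\delta s}(s\wedge 1)^{-1/2}\bigl(1+|u|+|v^{u,v}_{t}|\bigr).
\end{equation*}
Taking expectation and using lemma \ref{lemma:moments_fast_motion} with $p=1$, which yields $\mathbb{E}|v^{u,v}_{t}|\le C(1+|u|+|v|)$ uniformly in $t\ge 0$, I obtain
\begin{equation*}
\mathbb{E}\bigl|P^{u}_{s}h(v^{u,v}_{t})-\bar h\bigr|\le C\,e^{-\delta s}(s\wedge 1)^{-1/2}(1+|u|+|v|).
\end{equation*}

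Finally, the bound $(s\wedge 1)^{-1/2}$ can be replaced by $s^{-1/2}$ at the cost of shrinking $\delta$: for $s\ge 1$ one has $e^{-\delta s}\le e^{-\delta s/2}\,s^{-1/2}\cdot s^{1/2}e^{-\delta s/2}$ and $s^{1/2}e^{-\delta s/2}$ is bounded, while for $s\le 1$ the two quantities coincide. Combining yields the asserted estimate with $C$ uniform in $B_1,B_2$, $t\ge s$ and in the initial data $u,v$. The only delicate ingredient is the representation $\mathbb{P}(B_2\mid\mathcal{F}_t)=P^{u}_{s}h(v^{u,v}_{t})$, which is standard once the Markov property of $(v^{u,v}_{r})_{r\ge 0}$ is invoked; the quantitative core is entirely the $s^{-1/2}$-singular rate of convergence to equilibrium supplied by the Bismut--Elworthy formula, so there is no real additional obstacle here.
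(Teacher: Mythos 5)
Your proof is correct and follows essentially the same route as the paper: both reduce the mixing bound, via the Markov property, to the bounded-test-function convergence-to-equilibrium estimate of lemma \ref{lemma:convergence_equilibrium} combined with the moment bound of lemma \ref{lemma:moments_fast_motion}; the paper merely carries out the conditioning explicitly on cylindrical sets (iterating the tower property to build the nested composition of indicators and semigroups) and then extends to all of $\mathcal{H}_{t+s}^{\infty}(u,v)$ by a monotone-class step, which is precisely the ``standard'' content of your representation $\mathbb{P}(B_2\mid\mathcal{F}_{t+s})=h(v^{u,v}_{t+s})$. The only deviation is your replacement of $(s\wedge 1)^{-1/2}$ by $s^{-1/2}$ at the price of halving $\delta$; the paper's statement keeps $\delta$ but its own proof silently glosses over the same point, and the weaker exponential rate is harmless for the subsequent lemmas.
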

Now Lemma \ref{lemma:mixing_1} implies the following classical result, e.g. see \cite{rozanov} (see also \cite[Proposition 3.3]{Cerrai_normal_dev}). The proof can be found in the appendix for completeness.
\begin{lemma}\label{lemma:mixing_2}
There exists $C>0$ such that for every $0\leq s_1 \leq t_1 < s_2 \leq t_2$ and $\xi_i$ $\mathcal{H}_{s_i}^{t_i}(v)$-measurable $i=1,2$ and $|\xi_i|\leq 1$ a.s $i=1,2$ 
\begin{equation*}
\left|\mathbb{E}\left[  \xi_{1} \xi_{2}\right]- \mathbb{E} \xi_{1} \mathbb{E} \xi_{2}\right| \leq C \frac{e^{-\delta\left(s_{2}-t_{1}\right)}}{\sqrt{s_{2}-t_{1}}}\left(1+|v|\right).
\end{equation*}
\end{lemma}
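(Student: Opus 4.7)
The plan is to deduce the claim from the strong mixing estimate of Lemma \ref{lemma:mixing_1} via the classical Rosenblatt--Ibragimov covariance inequality, along the lines of the result of Rozanov \cite{rozanov} which is cited just before the statement.

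First I would read off the strong mixing coefficient between $\mathcal{H}_{s_1}^{t_1}(u,v)$ and $\mathcal{H}_{s_2}^{t_2}(u,v)$ from Lemma \ref{lemma:mixing_1}. Since $\mathcal{H}_{s_1}^{t_1}(u,v)\subset\mathcal{H}_0^{t_1}(u,v)$ and $\mathcal{H}_{s_2}^{t_2}(u,v)\subset\mathcal{H}_{s_2}^{\infty}(u,v)$, an appropriate choice of the reference time in Lemma \ref{lemma:mixing_1} gives
\[
\alpha := \sup_{B_1 \in \mathcal{H}_{s_1}^{t_1}(u,v),\; B_2 \in \mathcal{H}_{s_2}^{t_2}(u,v)}\bigl|\mathbb{P}(B_1\cap B_2)-\mathbb{P}(B_1)\mathbb{P}(B_2)\bigr| \leq C\,\frac{e^{-\delta(s_2-t_1)}}{\sqrt{s_2-t_1}}\bigl(1+|u|+|v|\bigr).
\]

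Next I would perform the standard reduction from covariances of bounded random variables to covariances of indicators. Set $Y:=\mathrm{sgn}\bigl(\mathbb{E}[\xi_2\mid\mathcal{H}_{s_1}^{t_1}(u,v)]-\mathbb{E}\xi_2\bigr)$, which is $\mathcal{H}_{s_1}^{t_1}(u,v)$-measurable with $|Y|\leq 1$. Using $|\xi_1|\leq 1$ and the tower property,
\[
\bigl|\mathbb{E}[\xi_1\xi_2]-\mathbb{E}\xi_1\,\mathbb{E}\xi_2\bigr| \leq \mathbb{E}\bigl|\mathbb{E}[\xi_2\mid\mathcal{H}_{s_1}^{t_1}(u,v)]-\mathbb{E}\xi_2\bigr| = \mathbb{E}[Y\xi_2]-\mathbb{E}Y\,\mathbb{E}\xi_2,
\]
so it is enough to bound the right-hand side by a constant times $\alpha$. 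Decomposing $Y=Y^+-Y^-$ and $\xi_2=\xi_2^+-\xi_2^-$ with all parts in $[0,1]$ and applying the layer-cake identity $Y^{\pm}=\int_0^1 \mathbf{1}_{\{Y^{\pm}>r\}}\,dr$ (and analogously for $\xi_2^{\pm}$), Fubini's theorem expresses each of the four covariances $\mathbb{E}[Y^{\pm}\xi_2^{\pm}]-\mathbb{E}Y^{\pm}\mathbb{E}\xi_2^{\pm}$ as a double integral over $[0,1]^2$ whose integrand is a covariance of two indicator events, one in $\mathcal{H}_{s_1}^{t_1}(u,v)$ and one in $\mathcal{H}_{s_2}^{t_2}(u,v)$, hence bounded in absolute value by $\alpha$. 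Summing the four contributions gives $|\mathbb{E}[Y\xi_2]-\mathbb{E}Y\,\mathbb{E}\xi_2|\leq 4\alpha$, which combined with the first step yields the claim after absorbing the constant $4$ into $C$.

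The only point requiring care is the bookkeeping in the layer-cake and sign-decomposition argument, together with the small technicality of choosing the reference time so that the constraint $s\leq t$ in Lemma \ref{lemma:mixing_1} is satisfied (which can always be arranged, up to replacing $\delta$ by $\delta/2$ if necessary, without affecting the form of the bound). Since both ingredients are classical — indeed this is essentially the content cited from \cite{rozanov} — I expect no genuine obstacle beyond this once Lemma \ref{lemma:mixing_1} is in hand.
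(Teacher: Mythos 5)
Your proposal is correct and follows essentially the paper's route. The first reduction is identical: the paper also replaces $\xi_1$ by $\tilde\xi_1=2\,1_{\{\mathbb{E}[\xi_2\mid\mathcal{H}_{s_1}^{t_1}(u,v)]-\mathbb{E}\xi_2>0\}}-1$, which is your $Y$. The only difference is in the second half: where you split $\xi_2$ into positive and negative parts and use the layer-cake identity plus Fubini to reduce to covariances of indicator events, the paper simply dualizes a second time, replacing $\xi_2$ by $\tilde\xi_2=2\,1_{\{\mathbb{E}[\tilde\xi_1\mid\mathcal{H}_{s_2}^{t_2}(u,v)]-\mathbb{E}\tilde\xi_1>0\}}-1$, so that the covariance becomes exactly $4\,|\mathbb{P}(A\cap B)-\mathbb{P}(A)\mathbb{P}(B)|$ for events $A\in\mathcal{H}_{s_1}^{t_1}(u,v)$, $B\in\mathcal{H}_{s_2}^{t_2}(u,v)$; both variants produce the factor $4$ and then invoke Lemma \ref{lemma:mixing_1}. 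One caveat on your last remark: as written, your way of respecting the constraint $s\le t$ in Lemma \ref{lemma:mixing_1} (shifting the reference time and accepting $\delta/2$) only yields the bound with $e^{-\frac{\delta}{2}(s_2-t_1)}$, not the stated $e^{-\delta(s_2-t_1)}$; the paper applies Lemma \ref{lemma:mixing_1} with $t=t_1$, $s=s_2-t_1$ without comment, which is legitimate because the restriction $s\le t$ is never actually used in its proof, so the cleaner fix is to note that this restriction can be dropped (and in any case only the existence of some positive exponential rate matters for the later use in Lemma \ref{lemma:mixing_3} and Lemma \ref{lemma:crucial_lemma}).
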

Since in our case $|\xi_i|$ will not be bounded by $1$ we need the following result which is similar  to \cite[Proposition 3.3]{Cerrai_normal_dev}. Also in this case we postpone the proof in the appendix.
\begin{lemma}\label{lemma:mixing_3}
Let $\rho \in (0,1)$. Then there exists $C=C(\rho)>0$ such that for every $0\leq s_1 \leq t_1 < s_2 \leq t_2$ and $\xi_i$ $\mathcal{H}_{s_i}^{t_i}(v)$-measurable, $i=1,2$ satisfying for some $K_i=K_i(\rho)>0$
\begin{equation}\label{eq:lemma-hp_mixing_3}
\left( \mathbb{E}\left|\xi_{i}\right |^{\frac{2}{1-\rho}} \right)^{\frac{1-\rho}{2}}= K_i < \infty
\end{equation}
then:
$$
\left|\mathbb{E}\left[  \xi_{1} \xi_{2}\right]- \mathbb{E} \xi_{1} \mathbb{E} \xi_{2}\right| \leq C K_1^{\frac{2}{2+\rho}} K_2^{\frac{2}{2+\rho}} (K_1+K_2)^{\frac{2 \rho}{2+\rho}}  \left(\frac{e^{-\delta ( s_{2}-t_{1})}}{\sqrt{s_{2}-t_{1}}} \left(1+|v|\right) \right)^{\frac{\rho}{2+\rho}} .
$$
\end{lemma}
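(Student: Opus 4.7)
The plan is to reduce the problem to the bounded case of Lemma \ref{lemma:mixing_2} by truncating the $\xi_i$ at some level $N>0$ and optimizing over $N$. Set $\xi_i^N := \xi_i \mathbf{1}_{\{|\xi_i| \leq N\}}$ and $\eta_i^N := \xi_i - \xi_i^N$; both are still $\mathcal{H}_{s_i}^{t_i}(u,v)$-measurable, and $|\xi_i^N|\leq N$. Applying Lemma \ref{lemma:mixing_2} to the pair $(\xi_1^N/N,\xi_2^N/N)$ (which is bounded by $1$) and multiplying through by $N^2$ gives
\begin{equation*}
\bigl|\mathbb{E}[\xi_1^N \xi_2^N] - \mathbb{E}\xi_1^N \mathbb{E}\xi_2^N\bigr| \leq C N^2 \Psi, \qquad \Psi := \frac{e^{-\delta(s_2-t_1)}}{\sqrt{s_2-t_1}}\bigl(1+|u|+|v|\bigr).
\end{equation*}

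Next I would decompose
\begin{equation*}
\mathbb{E}[\xi_1\xi_2] - \mathbb{E}\xi_1\mathbb{E}\xi_2 = \bigl(\mathbb{E}[\xi_1^N\xi_2^N] - \mathbb{E}\xi_1^N\mathbb{E}\xi_2^N\bigr) + R,
\end{equation*}
where $R$ is the sum of truncation-error cross terms, each involving at least one $\eta_i^N$. The crucial estimate is
\begin{equation*}
\mathbb{E}\bigl[|\xi_1||\xi_2|\mathbf{1}_{\{|\xi_1|>N\}}\bigr] \leq C K_1^{1+\rho} K_2\, N^{-\rho},
\end{equation*}
which follows from the three-factor H\"older inequality applied to $|\xi_1|,|\xi_2|,\mathbf{1}_{\{|\xi_1|>N\}}$ with exponents $p_1=p_2=4/(2-\rho+\rho^2)$ and $p_3=2/(\rho(1-\rho))$, combined with Chebyshev's inequality $\mathbb{P}(|\xi_1|>N) \leq K_1^{2/(1-\rho)} N^{-2/(1-\rho)}$. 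These exponents are dictated by three constraints: (i) $1/p_1+1/p_2+1/p_3=1$, (ii) $p_1,p_2\leq 2/(1-\rho)$, so that monotonicity of $L^p$ norms on a probability space gives $\|\xi_i\|_{p_j}\leq K_i$, and (iii) the Chebyshev factor produces exactly $N^{-\rho}$ (since this is the decay rate that the later optimization in $N$ will require). Combining the symmetric estimate obtained by swapping the roles of $\xi_1,\xi_2$ with faster-decaying bounds for the product-of-expectation pieces (e.g.\ $|\mathbb{E}\eta_i^N| \leq K_i^{2/(1-\rho)}N^{-(1+\rho)/(1-\rho)}$, which decays strictly faster than $N^{-\rho}$ and is therefore absorbed), yields
\begin{equation*}
|R| \leq C\, K_1 K_2 (K_1+K_2)^{\rho}\, N^{-\rho}.
\end{equation*}

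Finally I would optimize in $N$: balancing the two contributions $CN^2\Psi$ and $CK_1K_2(K_1+K_2)^{\rho}N^{-\rho}$ leads to the choice $N^{2+\rho} \sim K_1 K_2 (K_1+K_2)^{\rho}/\Psi$, and substituting back produces exactly
\begin{equation*}
C K_1^{2/(2+\rho)} K_2^{2/(2+\rho)} (K_1+K_2)^{2\rho/(2+\rho)}\, \Psi^{\rho/(2+\rho)},
\end{equation*}
which is the claimed inequality. The main technical obstacle is the H\"older and Chebyshev bookkeeping in the second step: one must simultaneously achieve the decay rate $N^{-\rho}$, keep the $L^{p_j}$ norms dominated by $K_j$, and arrange the $K$-factors into the symmetric form $K_1 K_2 (K_1+K_2)^\rho$; once that is done, the final optimization is purely algebraic.
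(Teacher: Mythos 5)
Your strategy is the same as the paper's: truncate at a level $N$ (the paper calls it $R$), apply Lemma \ref{lemma:mixing_2} to the bounded, rescaled parts to get the $CN^2\Psi$ contribution, bound the truncation-error terms by H\"older/Chebyshev so that they decay like $N^{-\rho}$ with prefactor $K_1K_2(K_1+K_2)^{\rho}$, and then optimize over $N$. Your three-factor H\"older bound $\mathbb{E}\bigl[|\xi_1||\xi_2|\mathbf{1}_{\{|\xi_1|>N\}}\bigr]\leq K_1^{1+\rho}K_2N^{-\rho}$ is correct (the exponent bookkeeping checks out) and is an acceptable substitute for the paper's treatment of the corresponding term $T_{2,R}$, and the final balancing of $N^2\Psi$ against $K_1K_2(K_1+K_2)^{\rho}N^{-\rho}$ reproduces the stated bound exactly as in the paper's minimization over $R$.

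The one step that does not hold as written is the treatment of the product-of-expectation pieces: you bound them by $K_1 K_2^{2/(1-\rho)}N^{-(1+\rho)/(1-\rho)}$ and declare them ``absorbed'' because the power of $N$ is better than $N^{-\rho}$. Faster decay in $N$ does not by itself give absorption: for $N\leq 1$ one has $N^{-(1+\rho)/(1-\rho)}\geq N^{-\rho}$, and the optimizing $N\sim\bigl(K_1K_2(K_1+K_2)^{\rho}/\Psi\bigr)^{1/(2+\rho)}$ can indeed be small (large $\Psi$); moreover the prefactor $K_1K_2^{2/(1-\rho)}$ is not dominated by $K_1K_2(K_1+K_2)^{\rho}$ when $K_2\gg K_1$. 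So the claimed intermediate bound $|R|\leq CK_1K_2(K_1+K_2)^{\rho}N^{-\rho}$ does not follow from what you wrote. The repair is immediate and is exactly what the paper does for its $T_{3,R}$: estimate $\mathbb{E}\bigl[|\xi_i|\mathbf{1}_{\{|\xi_i|>N\}}\bigr]\leq K_i\,\mathbb{P}(|\xi_i|>N)^{(1+\rho)/2}\leq K_i\,\mathbb{P}(|\xi_i|>N)^{\rho}$ (probabilities are at most one, so you may \emph{lower} the exponent to $\rho$) and then use Markov with the \emph{first} moment, $\mathbb{P}(|\xi_i|>N)\leq K_i/N$, which gives $K_i^{1+\rho}N^{-\rho}$; every remainder term then has the same $N^{-\rho}$ shape with constants fitting inside $K_1K_2(K_1+K_2)^{\rho}$, and your optimization goes through verbatim. (Alternatively, one can salvage your stronger Chebyshev bound by first noting the inequality is trivial unless the right-hand side is below $2K_1K_2$, which forces the optimal $N$ to be of order at least $K_1+K_2$; but that argument is missing from your write-up.)
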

\section{Averaged equation}\label{sec:averaged_eq}
In this section we introduce the averaged equation and we prove its well-posedness.\\
\begin{equation}\label{eq:f_bar}
\overline{F}(u)=\int_K F(u,v)\mu(dv),
\end{equation}
for every $u\in H$ and consider the so called averaged equation:
\begin{equation}\label{eq:averaged_eq}
\begin{cases}
dU_t=A_1U_t dt+\overline{F}(U_t)dt+dW^{Q_1}_t ,\\
U_0=u  ,
\end{cases}
\end{equation}
for every $t\leq T$.\\
Now we prove the well-posedness of the averaged equation:
\begin{proposition}
Equation \eqref{eq:averaged_eq} admits a unique mild solution given by:
\begin{equation*}
U_t=e^{A_1t}u +\int_0^te^{A_1(t-s)} \overline{F} (U_s)ds +\int_0^te^{A_1(t-s)} dW^{Q_1}_s,
\end{equation*}
for every $t \in [0,T]$.\\
Moreover for every $p>0$ there exists $C=C(T,p)>0$ such that
\begin{align*} \mathbb{E}\left [ \sup_{t\in [0,T]} \abs{U_t}^p \right] \leq C (1+|u|^p).
\end{align*}
\end{proposition}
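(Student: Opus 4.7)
The plan has three steps: first establish that $\overline{F}$ is Lipschitz (and hence sublinear), then invoke classical theory for mild solutions of semilinear SPDE's, and finally repeat the a-priori estimate argument of lemma \ref{lemma:a_priori_estimates} but with the fast component absent.

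\textbf{Step 1: Lipschitz continuity of $\overline{F}$.} The key identity, valid for any $v \in K$ and $s \geq 0$ by invariance of $\mu^u$, is
\begin{equation*}
\overline{F}(u) = \int_K P_s^u F(u,\cdot)(z) \, \mu^u(dz).
\end{equation*}
For fixed $v \in K$, I would write
\begin{align*}
\overline{F}(u_1)-\overline{F}(u_2) &= \bigl(\overline{F}(u_1)-P_s^{u_1}F(u_1,\cdot)(v)\bigr) + \bigl(P_s^{u_1}F(u_1,\cdot)(v)-P_s^{u_2}F(u_2,\cdot)(v)\bigr) \\
&\quad + \bigl(P_s^{u_2}F(u_2,\cdot)(v)-\overline{F}(u_2)\bigr).
\end{align*}
The outer two terms tend to $0$ as $s \to \infty$ by the Lipschitz part of lemma \ref{lemma:convergence_equilibrium} applied to $\phi=F(u_i,\cdot)$, which has Lipschitz constant $L_F$. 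The middle term, by hypothesis \ref{hp:coefficients} and lemma \ref{lemma:continuity_fast_motion_slow_comp}, is bounded uniformly in $s$ by
\begin{equation*}
L_F\bigl(|u_1-u_2|+\mathbb{E}|v_s^{u_1,v}-v_s^{u_2,v}|\bigr)\leq C|u_1-u_2|.
\end{equation*}
Sending $s \to \infty$ yields $|\overline{F}(u_1)-\overline{F}(u_2)|\leq C|u_1-u_2|$. Sublinearity of $\overline{F}$ then follows from the sublinearity of $F$ together with lemma \ref{lemma:moments_invariant_measure}: $|\overline{F}(u)|\leq |F(0,0)|+L_F|u|+L_F\int_K|z|\mu^u(dz)\leq C(1+|u|)$.

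\textbf{Step 2: Existence and uniqueness.} Since $A_1$ generates an analytic semigroup of contractions (hypothesis \ref{hp:operator1}), $\overline{F}$ is globally Lipschitz by Step 1, and the stochastic convolution $\Gamma^1_t$ has continuous paths by lemma \ref{lemma:estimate_stoch_convolution}, the classical Banach fixed-point argument for mild solutions of semilinear SPDE's (see e.g. \cite{daprato_stochastic_eq}) yields the unique mild solution in the stated form.

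\textbf{Step 3: Moment estimate.} Set $\Lambda_t=U_t-\Gamma^1_t$, so that $\Lambda$ solves the random PDE $\frac{d}{dt}\Lambda_t=A_1\Lambda_t+\overline{F}(\Lambda_t+\Gamma^1_t)$ classically. Taking the inner product with $\Lambda_t$, using dissipativity of $A_1$, sublinearity of $\overline{F}$ (proven in Step 1) and Young's inequality gives
\begin{equation*}
\frac{d}{dt}|\Lambda_t|^2\leq C|\Lambda_t|^2+C(1+|\Gamma^1_t|^2),
\end{equation*}
so the comparison theorem and Gronwall's lemma yield $\sup_{t\leq T}|\Lambda_t|^2\leq C\bigl(|u|^2+\int_0^T(1+|\Gamma^1_s|^2)\,ds\bigr)$. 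Taking expectation and using lemma \ref{lemma:estimate_stoch_convolution} (with $\theta=0$) together with $U_t=\Lambda_t+\Gamma^1_t$ concludes the proof.

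The only genuinely non-routine piece is Step 1: the Lipschitz character of $\overline{F}$ is not obvious because $\mu^u$ itself depends on $u$, and the trick is to exploit the uniform-in-$s$ control of the middle term while letting $s\to\infty$ to kill the two equilibrium errors.
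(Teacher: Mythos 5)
Your proposal is correct and follows essentially the same route as the paper: the same three-term decomposition whose two equilibrium errors vanish as $s\to\infty$ (lemma \ref{lemma:convergence_equilibrium}) while the middle term is controlled by lemma \ref{lemma:continuity_fast_motion_slow_comp} and the Lipschitzianity of $F$, followed by citing \cite{daprato_stochastic_eq} for well-posedness and a standard Gronwall argument for the moment bound. The only cosmetic difference is that the paper first scalarizes by testing against $h\in H$ with $|h|=1$ (working with $\phi_{u,h}(v)=\langle F(u,v),h\rangle$), since lemma \ref{lemma:convergence_equilibrium} is stated for real-valued Lipschitz $\phi$, whereas you apply it directly to the $H$-valued map $F(u_i,\cdot)$ --- an inessential gap repaired exactly by that scalarization.
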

\begin{proof}
In order to prove the first part of the Proposition it is sufficient to prove that $\overline F$ is Lipschitz (e.g. see \cite{daprato_stochastic_eq}). But this follows from the Lipschitz continuity of $F$, indeed:
$$|\overline F (u_1) - \overline F (u_2)| \leq \int_K | F(u_1,v)-F(u_2,v) |\mu(dv) \leq L_F |u_1-u_2|, \quad \forall u_1,u_2 \in H.$$
Hence we obtain the Lipschitzianity of $\overline F$ and the first claim of the Proposition.\\
The proof of the second claim of the Proposition follows by a standard application of Gronwall's  Lemma.
\end{proof}
\section{Preliminary results}\label{sec:preliminary_results}
In this section we prove a technical result, i.e. Lemma \ref{lemma:crucial_lemma}, which is inspired by \cite[Lemma 4.2]{Cerrai_normal_dev} and follows by the mixing properties of the fast motion studied in section \ref{sec:fast_motion}, in particular Lemma \ref{lemma:mixing_3}. In order to prove it we proceed with similar techniques to the  ones of the proof of \cite[Lemma 4.2]{Cerrai_normal_dev}. However note that Lemma \ref{lemma:crucial_lemma} is a slightly stronger result than \cite[Lemma 4.2]{Cerrai_normal_dev}: indeed in Lemma \ref{lemma:crucial_lemma} the absolute value is inside the integral, while in \cite[Lemma 4.2]{Cerrai_normal_dev} it is outside.

Fix $\overline \xi \in \mathcal C([0,T];H)$, $v \in K, h\in H$ with $|h|=1$, and define 
$$\Phi_h(r) \coloneqq    \langle F( \overline \xi_{r},V_r^\epsilon)-\overline{F}(\overline \xi_r),h\rangle \quad \forall r \leq T$$
and
\begin{align*}
\Psi_h(r)  \coloneqq \Phi_h(\epsilon r)  & =  \langle F(\overline\xi_{\epsilon r},V_{\epsilon r}^\epsilon)-\overline{F}(\overline \xi_{\epsilon r}),h\rangle.
\end{align*}
Moreover let $n \in \mathbb{N}$ and set:
\begin{equation}\label{eq:definition_J_j}
\begin{aligned}
J_{ j}\left(r_{1}, \ldots, r_{2 n}\right) \coloneq \left|\mathbb{E} \prod_{i=1}^{2 n} \Psi_h\left(r_{i}\right)-\mathbb{E} \prod_{i=1}^{2 j} \Psi_h\left(r_{i}\right) \mathbb{E} \prod_{i=2 j+1}^{2 n} \Psi_h\left(r_{i}\right)\right|,
\end{aligned}
\end{equation}
for every $1\leq j\leq n$, $0 \leq r_1 \leq ... \leq r_{2n} \leq T/\epsilon$.\\
First we show the following result:
\begin{lemma}
Let $0< \rho < 1$ then there exists $C=C(T,\rho,n)>0$ and $\eta=\eta(\rho,j_2-j_1)>0$ such that 
\begin{equation}\label{eq:lemma_estimate_product}
\left | \mathbb{E}  \prod_{i=j_1}^{j_2} \Psi_h (r_i) \right| \leq C \left( \frac{ e^{-\delta (r_{j_2}-r_{j_2-1})}} {\sqrt{r_{j_2}-r_{j_2-1}}}\right)^{\frac{\rho}{2+\rho}} \left(1+\sup_{r \leq T}|\overline \xi_r|^{\eta}+|v|^{ \eta}\right),
\end{equation}
for every  $u \in H,v \in K$, $1\leq j_1 \leq j_2 \leq n$, $0 \leq r_1 \leq ... \leq r_{2n} \leq T/\epsilon$.\\
Moreover there exists $C=C(T,\rho,n)>0$ and $\eta= \eta(\rho,n) >0$ such that 
\begin{equation}\label{eq:lemma_estimate_mixing}
\begin{aligned}
J_{ j}\left(r_{1}, \ldots, r_{2 n}\right) \leq C \left(\frac{e^{-\delta \hat{r}_{j}}}{\sqrt{\hat{r}_{j}}}\right)^{\frac{\rho}{2+\rho}} \left(1+ \sup_{r \leq T}|\overline \xi_r| ^{\eta}+|v|^{\eta}\right) ,
\end{aligned}
\end{equation}
where
$$
\hat{r}_{j}=\max \left(r_{2 n}-r_{2 n-1}, r_{2 j+1}-r_{2 j}\right),
$$
for every $1\leq j\leq n$, $0 \leq r_1 \leq ... \leq r_{2n} \leq T/\epsilon$.
\end{lemma}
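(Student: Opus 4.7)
The plan is to derive both estimates from lemma \ref{lemma:mixing_3} applied to convenient splittings of the product, with the moments of the individual factors $\Phi(r_i)$ being controlled through Hölder's inequality together with lemmas \ref{lemma:moments_fast_motion} and \ref{lemma:moments_invariant_measure}. A preliminary observation is that the Lipschitzianity of $F$ and lemma \ref{lemma:moments_invariant_measure} yield $|\Phi(r)| \leq C(1+|u|+|v_r^{u,v}|)$, so lemma \ref{lemma:moments_fast_motion} produces bounds of the form $\mathbb{E}|\Phi(r_i)|^q \leq C(1+|u|^q+|v|^q)$ for every $q\geq 1$.

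For \eqref{eq:lemma_estimate_product} I would factor the product as $\xi_1 \xi_2$ with $\xi_1=\prod_{i=j_1}^{j_2-1}\Phi(r_i)$, which is $\mathcal{H}_{r_{j_1}}^{r_{j_2-1}}$-measurable, and $\xi_2=\Phi(r_{j_2})$, which is $\mathcal{H}_{r_{j_2}}^{r_{j_2}}$-measurable, and then split
$$\mathbb{E}[\xi_1\xi_2] = \mathbb{E}\xi_1\,\mathbb{E}\xi_2 + \bigl(\mathbb{E}[\xi_1\xi_2] - \mathbb{E}\xi_1\,\mathbb{E}\xi_2\bigr).$$
The second summand is handled by lemma \ref{lemma:mixing_3}, which with $s_2-t_1=r_{j_2}-r_{j_2-1}$ already delivers the factor $(e^{-\delta(r_{j_2}-r_{j_2-1})}/\sqrt{r_{j_2}-r_{j_2-1}})^{\rho/(2+\rho)}$; the constants $K_1,K_2$ from the hypothesis \eqref{eq:lemma-hp_mixing_3} are controlled by Hölder's inequality together with the moment bound above, producing a polynomial factor in $(1+|u|+|v|)$ with an exponent depending on $\rho$ and $j_2-j_1$. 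For the leading term $\mathbb{E}\xi_1\mathbb{E}\xi_2$, I would apply the Lipschitz part of lemma \ref{lemma:convergence_equilibrium} to obtain $|\mathbb{E}\Phi(r_{j_2})|\leq C e^{-2\delta r_{j_2}}(1+|u|+|v|)$, use $r_{j_2}\geq r_{j_2}-r_{j_2-1}$ to re-express this as $e^{-2\delta(r_{j_2}-r_{j_2-1})}$, and finally invoke the elementary inequality $e^{-2\delta t}\leq C_\rho(e^{-\delta t}/\sqrt{t})^{\rho/(2+\rho)}$ for all $t>0$ (valid because $t\mapsto e^{-at}t^b$ is bounded on $(0,\infty)$ for $a,b>0$). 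The bound on $\mathbb{E}|\xi_1|$ is again a direct Hölder-plus-\ref{lemma:moments_fast_motion} estimate.

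For \eqref{eq:lemma_estimate_mixing} I would produce two independent bounds on $J_j$ and take their minimum. The first follows from lemma \ref{lemma:mixing_3} applied to $\xi_1=\prod_{i=1}^{2j}\Phi(r_i)$ and $\xi_2=\prod_{i=2j+1}^{2n}\Phi(r_i)$, producing the factor corresponding to the gap $r_{2j+1}-r_{2j}$. The second follows from the triangle inequality
$$J_j \le \left|\mathbb{E}\prod_{i=1}^{2n}\Phi(r_i)\right| + \left|\mathbb{E}\prod_{i=1}^{2j}\Phi(r_i)\right|\left|\mathbb{E}\prod_{i=2j+1}^{2n}\Phi(r_i)\right|,$$
to which \eqref{eq:lemma_estimate_product} is applied to both products ending at $r_{2n}$ (delivering the gap $r_{2n}-r_{2n-1}$) and a plain Hölder bound is used for the remaining factor. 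Since $t\mapsto e^{-\delta t}/\sqrt{t}$ is strictly decreasing on $(0,\infty)$, the minimum of the two bounds is evaluated at the larger of the two gaps, i.e.\ at $\hat r_j$, which yields the claimed estimate.

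The main obstacle I expect is the bookkeeping of the moment exponents. Each factor $\Phi(r_i)$ must be raised to the power $2/(1-\rho)$ inside the $K_i$'s of lemma \ref{lemma:mixing_3}, and after applying Hölder over up to $2n$ factors one needs moments of $|v_r^{u,v}|$ of very high order. This propagates through both bounds, and the final polynomial exponent $\eta$ must be chosen large enough to dominate every contribution and survive the $\min$ step, yet small enough that $C=C(T,\rho,n)$ stays finite. No new analytic ingredient is needed beyond lemmas \ref{lemma:moments_fast_motion}, \ref{lemma:convergence_equilibrium} and \ref{lemma:mixing_3}, but the uniformity in the indices $(j_1,j_2)$ and the combinatorial structure around $\hat r_j$ require some care.
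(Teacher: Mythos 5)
Your proposal is correct and follows essentially the same route as the paper: the moment bound $\mathbb{E}|\prod\Phi(r_i)|^{2/(1-\rho)}\leq C(1+|u|^{\cdot}+|v|^{\cdot})$ feeding lemma \ref{lemma:mixing_3}, the split of $\mathbb{E}\prod_{i=j_1}^{j_2}\Phi(r_i)$ into a mixing term with gap $r_{j_2}-r_{j_2-1}$ plus $\mathbb{E}\prod_{i=j_1}^{j_2-1}\Phi(r_i)\,\mathbb{E}\Phi(r_{j_2})$ controlled via lemma \ref{lemma:convergence_equilibrium} and the elementary absorption of $e^{-2\delta t}$ into $(e^{-\delta t}/\sqrt{t})^{\rho/(2+\rho)}$, and finally the two independent bounds on $J_j$ (direct mixing at the gap $r_{2j+1}-r_{2j}$, and the triangle inequality giving the gap $r_{2n}-r_{2n-1}$) combined through the monotonicity of $s\mapsto e^{-\delta s}s^{-1/2}$ to produce $\hat r_j$. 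This matches the paper's proof step for step, including the exponent bookkeeping for $\overline\eta$ and $\eta$.
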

\begin{remark}
Note that the dependence of the exponents with respect to $j_2-j_1$ and $n$ is not restrictive: once $n$  has been fixed (together with $\rho$ and $T$) we are allowed to take any $0 \leq j_1 \leq j_2 \leq n$. In this sense $\eta=\eta(\rho, j_2-j_1)$. Of course since $j_2-j_1 \leq n$ we could choose $\eta'=\eta'(\rho,n) \geq \eta $ and replace $\eta$ with $\eta'$ in the estimates of the Lemma. However the estimate with  $\eta$ is more precise.
\end{remark}
\begin{proof}
By the sublinearity of $\Psi_h$ and Lemma \ref{lemma:moments_fast_motion} for every $p\geq1$ we have:
\begin{equation}\label{eq:proof_estimate_product}
\begin{aligned}
\mathbb{E}\left|\prod_{i=j_{1}}^{j_{2}} \Psi_h\left(r_{i}\right)\right|^{p} & \leq C \left(1+\sum_{i=j_{1}}^{j_{2}}  \left |\overline \xi_{\epsilon r_i} \right|^{\left(j_{2}-j_{1}+1\right) p}+\sum_{i=j_{1}}^{j_{2}} \mathbb{E}\left|V^\epsilon_{\epsilon r_{i}}\right|^{\left(j_{2}-j_{1}+1\right) p}\right) \\
& \leq C\left(1+\sup_{r \leq T}|\overline \xi_r|^{\left(j_{2}-j_{1}+1\right) p}+|v|^{\left(j_{2}-j_{1}+1\right) p}\right),
\end{aligned}
\end{equation}
for every $1 \leq j_1 \leq j_2 \leq 2n$.\\
Notice that  $V^{\epsilon}_{\epsilon r}$ and $v^v_r$, defined by \eqref{eq:fast_motion_fixed_u} for $w_r^{Q_2}= W^{Q_2}_{\epsilon r}/ \sqrt{\epsilon}$, are indistinguishable for $r \in [0,T/\epsilon]$. Then by setting $p=2/(1-\rho)$ and applying Lemma  \ref{lemma:mixing_3} to $\xi_1=\prod_{i=1}^{j} \Psi_h \left(r_{i}\right)$, $\xi_2=\prod_{i=j+1}^{2 n} \Psi_h \left(r_{i}\right)$ with $K_1=C(1+\sup_{r \leq T}|\overline \xi_r|^j+|v|^j)$, $K_2=C(1+\sup_{r \leq T}|\overline \xi_r|^{2n-j}+|v|^{2n-j})$  for every $0 \leq j_1 <  j < j_2 \leq 2n$ we have:
\begin{equation}\label{eq:proof_estimate_mixing}
\begin{aligned}
&\left|\mathbb{E} \prod_{i=j_1}^{j_2} \Psi_h\left(r_{i}\right)-\mathbb{E} \prod_{i=j_1}^{j} \Psi_h \left(r_{i}\right) \mathbb{E} \prod_{i=j+1}^{j_2} \Psi_h \left(r_{i}\right)\right|  \leq C \left(1+\sup_{r \leq T}|\overline \xi_r|^{\eta}+|v|^{\eta}\right)\left(\frac{e^{-\delta\left(r_{j+1}-r_{j}\right)}}{\sqrt{r_{j+1}-r_{j}}}\right)^{\frac{\rho}{2+\rho}},
\end{aligned}
\end{equation}
where $\eta=j_2-j_1 + \frac{\rho (j_2-j_1-1)}{(\rho+2)}$. \\
By definitions of $\Psi_h$, the indistinguishability of $V^{\epsilon}_{\epsilon r}$, $v^v_r$ and by Lemma \ref{lemma:convergence_equilibrium} we have:
\begin{equation}\label{eq:estimate_phi}
\left|\mathbb{E} \Psi_h\left(r_{j_{2}}\right)\right| \leq C e^{-2 \delta r_{j_{2}}}\left(1+\sup_{r \leq T}|\overline \xi_r|+|v|\right).
\end{equation}
Now by the last three inequalities we have: 
\begin{small}
\begin{equation*}
\begin{aligned}
\left|\mathbb{E} \prod_{i=j_{1}}^{j_{2}} \Psi_h \left(r_{i}\right)\right| & \leq \left|\mathbb{E} \prod_{i=j_{1}}^{j_{2}} \Psi_h\left(r_{i}\right)-\mathbb{E} \prod_{i=j_{1}}^{j_{2}-1} \Psi_h \left(r_{i}\right) \mathbb{E} \Psi_h \left(r_{j_{2}}\right)\right|+ \mathbb{E} \Bigg| \prod_{i=j_{1}}^{j_{2}-1} \Psi_h \left(r_{i}\right) \Bigg| \big|    \mathbb{E} \Psi_h \left(r_{j_{2}}\right)\big|   \\
& \leq C \left(1+\sup_{r \leq T}|\overline \xi_r|^{ \eta}+|v|^{ \eta}\right)\left(\frac{e^{-\delta\left(r_{j_{2}}-r_{j_{2}-1}\right)}}{\sqrt{r_{j_{2}}-r_{j_{2}-1}}}\right)^{\frac{\rho}{2+\rho}}\\
& \quad + C (1+\sup_{r \leq T}|\overline \xi_r|^{j_{2}-j_{1}}+|v|^{j_{2}-j_{1}}) \left(1+\sup_{r \leq T}|\overline \xi_r|+|v|\right)   e^{-2 \delta r_{j_{2}}} \\
& \leq C\left(1+\sup_{r \leq T}|\overline \xi_r|^{ \eta}+|v|^{ \eta}\right)\left(\frac{e^{-\delta\left(r_{j_{2}}-r_{j_{2}-1}\right)}}{\sqrt{r_{j_{2}}-r_{j_{2}-1}}}\right)^{\frac{\rho}{2+\rho}} ,
\end{aligned}
\end{equation*}
\end{small}
where $ \eta=(j_2-j_1 +1)+ \frac{\rho (j_2-j_1)}{(\rho+2)}$.
This implies \eqref{eq:lemma_estimate_product}.\\
Now by \eqref{eq:lemma_estimate_product} we have:
\begin{equation}
\left|\mathbb{E} \prod_{i=j_1}^{2 n} \Psi_h \left(r_{i}\right)\right| \leq C \left(1+\sup_{r \leq T}|\overline \xi_r|^{\eta}+|v|^{\eta}\right) \left(\frac{e^{-\delta\left(r_{2 n}-r_{2 n-1}\right)}}{\sqrt{r_{2 n}-r_{2 n-1}}}\right)^{\frac{\rho}{2+\rho}}.
\end{equation}
for every $j_1 <2n.$\\
Now fix any  $j < 2n$. Then by  the last inequality and \eqref{eq:proof_estimate_product} we have:
\begin{small}
\begin{equation}\label{eq:proof_estimate_mixing_products_final_variables}
\begin{aligned}
&\left|\mathbb{E} \prod_{i=1}^{2 n} \Psi_h \left(r_{i}\right)-\mathbb{E} \prod_{i=1}^{j} \Psi_h \left(r_{i}\right) \mathbb{E} \prod_{i=j+1}^{2 n} \Psi_h \left(r_{i}\right)\right|  \leq C\left(1+\sup_{r \leq T}|\overline \xi_r|^{\eta}+|v|^{\eta}\right)\left(\frac{e^{-\delta\left(r_{2 n}-r_{2 n-1}\right)}}{\sqrt{r_{2 n}-r_{2 n-1}}}\right)^{\frac{\rho}{2+\rho}}.
\end{aligned}
\end{equation}
\end{small}
Finally consider \eqref{eq:proof_estimate_mixing} with $j_1=1,j_2=2n$ and \eqref{eq:proof_estimate_mixing_products_final_variables}. Since the function $f(s)=e^{-\delta s} s^{-1/2}$ is decreasing we have
\begin{small}
\begin{equation*}
\begin{aligned}
\left|\mathbb{E} \prod_{i=1}^{2 n} \Psi_h \left(r_{i}\right)-\mathbb{E} \prod_{i=1}^{j} \Psi_h \left(r_{i}\right) \mathbb{E} \prod_{i=j+1}^{2 n} \Psi_h \left(r_{i}\right)\right|&\leq C \left(\frac{e^{-\delta \max \left(r_{2 n}-r_{2 n-1}, r_{2 j+1}-r_{2 j}\right)}}{\sqrt{\max \left(r_{2 n}-r_{2 n-1}, r_{2 j+1}-r_{2 j}\right)}}\right)^{\frac{\rho}{2+\rho}} \left(1+\sup_{r \leq T}|\overline \xi_r|^{\eta}+|v|^{\eta}\right)\\
&=C \left(\frac{e^{-\delta \hat{r}_{j}}}{\sqrt{\hat{r}_{j}}}\right)^{\frac{\rho}{2+\rho}} \left(1+ \sup_{r \leq T}|\overline \xi_r| ^{\eta}+|v|^{\eta}\right) .
\end{aligned}
\end{equation*}
\end{small}
This implies \eqref{eq:lemma_estimate_mixing}.
\end{proof}
Let $0<\alpha$, $0 \leq \beta < 1/3$ and set 
$$\theta_{\alpha,\beta}(r):=e^{-r \alpha}r^{-\beta} \quad \forall r>0.$$

Now we can state and prove the main result of this section.
\begin{lemma}\label{lemma:crucial_lemma}
Let $n \in \mathbb{N}$ and $0 \leq \beta < 1/3$. Then there exists a constant $C=C(T,n,\beta)>0$ and $\eta=\eta(n)>0$ such that for every $\epsilon>0$, $\alpha>0$, $u \in H,v \in K$, $h\in H$ with $|h|=1$ we have:
\begin{equation*}
 \int_{[s,t]^{2n}} \left  | \mathbb{E}  \prod_{i=1}^{2n} \theta_{\alpha,\beta}(t-r_i) \langle F(\overline \xi_{r_i},V_{r_i}^\epsilon)-\overline{F}(\overline \xi_{ r_i}),h\rangle\right|    dr_1...dr_{2n} \leq C \left(1+\sup_{r \leq T}|\overline \xi_r|^{\eta}+|v|^{\eta}\right) \epsilon^{n} \left( \frac{1}{\alpha} \right)^{(1-2\beta)n}.
\end{equation*}
\end{lemma}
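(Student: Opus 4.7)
The plan is to follow closely the strategy of \cite[Lemma 4.2]{Cerrai_normal_dev}, exploiting the mixing estimates \eqref{eq:lemma_estimate_product} and \eqref{eq:lemma_estimate_mixing} established above. The argument has four parts: a symmetrization to move to the ordered simplex, an iterated decomposition of $\mathbb{E}\prod_{i}\Psi_h(r_i/\epsilon)$ via the mixing lemma, a change of variables to pair centers/widths, and a separate bookkeeping of the $\theta_{\alpha,\beta}$-weights and the mixing decay factors.

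First I would observe that $\prod_{i=1}^{2n}\theta_{\alpha,\beta}(r_i)\Psi_h(r_i/\epsilon)$ is invariant under permutations of $(r_1,\ldots,r_{2n})$, so the integral over $[s,t]^{2n}$ equals $(2n)!$ times the integral over the simplex $\{s\le r_1\le\cdots\le r_{2n}\le t\}$. On this simplex, I would iterate \eqref{eq:lemma_estimate_mixing} (applied with $j=n-1,n-2,\dots,1$ successively, peeling off the last pair at each step) to write
\begin{equation*}
\mathbb{E}\prod_{i=1}^{2n}\Psi_h(r_i/\epsilon)=\prod_{j=1}^{n}\mathbb{E}\bigl[\Psi_h(r_{2j-1}/\epsilon)\Psi_h(r_{2j}/\epsilon)\bigr]+\sum_{k=1}^{n-1}\mathcal{R}_k,
\end{equation*}
where each pair expectation is controlled by \eqref{eq:lemma_estimate_product} with $j_1=2j-1,\,j_2=2j$, giving the within-pair decay $(e^{-\delta(r_{2j}-r_{2j-1})/\epsilon}/\sqrt{(r_{2j}-r_{2j-1})/\epsilon})^{\rho/(2+\rho)}$ up to the polynomial moment factor $C(1+|u|^{\eta}+|v|^{\eta})$, and each correction $\mathcal{R}_k$ contains the first $k-1$ pair expectations multiplied by a mixing-error term carrying a between-pair decay at the gap $r_{2k+1}-r_{2k}$ (since $e^{-\delta s}/\sqrt{s}$ is decreasing, the $\max$ in \eqref{eq:lemma_estimate_mixing} can be relaxed to either gap).

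Next I would change variables on the simplex to pair centers $a_j=r_{2j-1}$ and widths $b_j=r_{2j}-r_{2j-1}$. Since $\theta_{\alpha,\beta}$ is decreasing, $\theta_{\alpha,\beta}(a_j)\theta_{\alpha,\beta}(a_j+b_j)\le\theta_{\alpha,\beta}(a_j)^2=e^{-2\alpha a_j}a_j^{-2\beta}$. Relaxing the ordering constraint on the $a_j$'s (using symmetry in $(a_1,\dots,a_n)$ to integrate over $[0,\infty)^n$ divided by $n!$) bounds the main-term $a$-integrals by
\begin{equation*}
\frac{1}{n!}\left(\int_0^{\infty}e^{-2\alpha a}a^{-2\beta}\,da\right)^{n}=\frac{\Gamma(1-2\beta)^{n}}{n!\,(2\alpha)^{(1-2\beta)n}},
\end{equation*}
which is finite since $\beta<1/2$. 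The $n$ width integrations, after the substitution $\tau=b_j/\epsilon$, each yield
\begin{equation*}
\int_0^{\infty}\left(\frac{e^{-\delta b_j/\epsilon}}{\sqrt{b_j/\epsilon}}\right)^{\!\rho/(2+\rho)}\!db_j=\epsilon\int_0^{\infty}e^{-\delta\tau\rho/(2+\rho)}\tau^{-\rho/(2(2+\rho))}\,d\tau=C\epsilon,
\end{equation*}
finite because $\rho/(2(2+\rho))<1$. Multiplying across the $n$ pairs gives the main-term bound $C\epsilon^{n}\alpha^{-(1-2\beta)n}(1+|u|^{\eta}+|v|^{\eta})$.

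The delicate part, and the main obstacle, will be showing that each correction term $\mathcal{R}_k$ obeys the same bound. The key point is that $\mathcal{R}_k$ supplies $k-1$ within-pair decays plus one between-pair decay, and the remaining residual expectation $\mathbb{E}\prod_{i=2k+1}^{2n}\Psi_h(r_i/\epsilon)$ can be decomposed further by the same iterative scheme so that, altogether, one collects $n$ gap-decay factors distributed across the available within- and between-pair gaps. Each such gap integration still contributes a factor $\epsilon$ after rescaling, while the centers and any uncovered variables are grouped as $\theta_{\alpha,\beta}^{2}$ and integrated to $\alpha^{-(1-2\beta)}$ per group; symmetrization in the free centers and the relaxation to independent ranges ensures no additional losses. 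The hypothesis $\rho\in(0,1)$ guarantees that the moment estimate used in \eqref{eq:proof_estimate_product} with $p=2/(1-\rho)$ applies, producing a finite (but $n$- and $\rho$-dependent) exponent $\eta$ of $|u|,|v|$, and the combinatorial $(2n)!$ from the symmetrization step as well as the $n!$ from symmetrizing centers are absorbed into $C=C(T,n,\beta)$. This yields the stated inequality.
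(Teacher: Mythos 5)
Your treatment of the main term (the product of pair expectations) is sound and essentially reproduces the paper's $n=1$ computation pair by pair: one decay per within-pair width gives $\epsilon^{n}$, and the paired $\theta$-weights give $\alpha^{-(1-2\beta)n}$. The genuine gap is in the correction terms. When you peel off a pair and invoke \eqref{eq:lemma_estimate_mixing}, the error $\mathcal{R}_k$ is controlled only through a single scalar bound carrying exactly one decay factor, at the one gap $\max(r_{2n}-r_{2n-1},\,r_{2k+1}-r_{2k})$; all structure in the remaining $2(n-k)$ variables is lost at that moment. Your claim that ``the remaining residual expectation $\mathbb{E}\prod_{i=2k+1}^{2n}\Psi_h$ can be decomposed further'' does not apply to $\mathcal{R}_k$: that residual expectation is not a factor of the error term, it only enters through the difference that the mixing lemma bounds, so there is nothing left to iterate on. Counting decay directions, $\mathcal{R}_k$ supplies $k-1$ within-pair decays plus one mixing decay, hence after integration you obtain at most $C\epsilon^{k}$ (times powers of $\alpha^{-1}$), not $\epsilon^{n}$. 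Already for $n=2$ the single correction is bounded by $C\epsilon$, while the lemma requires $\epsilon^{2}$; since the result is applied with $\alpha=\alpha_k\to\infty$ and summed over $k$, and Proposition \ref{crucial_lemma_2} needs the full $\epsilon^{n}$, this loss is fatal, not cosmetic.

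The missing ingredient is the paper's two-part device. First, the splitting point is not fixed by a peeling order but chosen pointwise as $j(r)$, the location of the \emph{largest} between-pair gap; since the maximum of the gaps dominates their average, the single decay $e^{-\delta \hat r_{j(r)}}$ in \eqref{eq:lemma_estimate_mixing} can be replaced by the product $e^{-\delta_n(r_{2n}-r_{2n-1}+\sum_i (r_{2i+1}-r_{2i}))}$ with $\delta_n=\delta\rho/(n(2+\rho))$, i.e.\ a (weak) exponential decay in \emph{every} gap, which after the iterated integrations yields the full $\epsilon^{n}\alpha^{-(1-2\beta)n}$ for the error part ($I_{1,\epsilon}$ in the paper). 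Second, the factored terms $\mathbb{E}\prod_{i\le 2j}\Psi_h\cdot\mathbb{E}\prod_{i>2j}\Psi_h$ (summed over the finitely many possible values of $j(r)$) are handled by induction on $n$, each factor recursively providing its own decays; this replaces your attempt to expand everything into pair expectations in one pass. Minor further points: peeling pairs from the front also needs a version of \eqref{eq:lemma_estimate_mixing} for blocks not starting at index $1$ (true by the same proof, but not the statement you quote), and the weights are $\theta_{\alpha,\beta}(t-r_i)$ rather than $\theta_{\alpha,\beta}(r_i)$, though that only affects bookkeeping, not the substance.
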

\begin{proof}
Recall the definition of $\Psi_h(r)$, then by a change of variable we have:
\begin{equation}\label{eq:first_change_variable_product_integrals}
\begin{aligned}
 \int_{[s,t]^{2n}}  \left |  \mathbb{E}  \prod_{i=1}^{2n} \theta_{\alpha,\beta}(t-r_i) \langle F(\overline \xi_{r_i},V_{r_i}^\epsilon)-\overline{F}(\overline \xi_{ r_i}),h\rangle \right|   dr_1...dr_{2n} =\epsilon^{2 n}   H_{\epsilon}(s, t) ,
\end{aligned}
\end{equation}
where we have defined:
$$
H_{\epsilon}(s, t) :=\int_{\left[\frac{s}{\epsilon}, \frac{t}{\epsilon}\right]^{2 n}}  \left |  \mathbb{E}   \prod_{i=1}^{2 n} \left ( \theta_{\alpha, \beta}\left(t-\epsilon r_{i}\right) \Psi_{ h}\left(r_{i}\right) \right) \right|   d r_{1} \cdots d r_{2 n}.
$$
By simmetry we have:
\begin{equation}\label{eq:H_eps}
 H_{\epsilon}(s, t)=C \int_{\frac{s}{\epsilon}}^{\frac{t}{\epsilon}} \int_{\frac{s}{\epsilon}}^{r_{2 n}} ... \int_{\frac{s}{\epsilon}}^{r_{2 }}  \left | \mathbb{E}  \prod_{i=1}^{2n} \left ( \theta_{\alpha, \beta}\left(t-\epsilon r_{i}\right) \Psi_{h}\left(r_{i}\right) \right) \right|  d r_{1} \cdots d r_{2 n}.
\end{equation}
We proceed by induction on $n$ and to this purpose fix some $\rho \in (0,1)$. Consider $n=1$ then by the definition of $\theta_{\alpha,\beta}=e^{-r \alpha}r^{-\beta}$, \eqref{eq:lemma_estimate_product} and some changes of variables we have
\begin{equation*}
\begin{aligned}
\epsilon^2   H_{\epsilon}(s, t)&=2 \epsilon^2 \int_{\frac{s}{\epsilon}}^{\frac{t}{\epsilon}} \int_{\frac{s}{\epsilon}}^{r_{2}} \theta_{\alpha, \beta}\left(t-\epsilon r_{1}\right) \theta_{\alpha, \beta}\left(t-\epsilon r_{2}\right)   \left| \mathbb{E} \left [ \Psi_{h}\left(r_{1}\right) \Psi_{h}\left(r_{2}\right) \right] \right|  d r_{1} d r_{2} \\
& \leq C \epsilon^2 \int_{\frac{s}{\epsilon}}^{\frac{t}{\epsilon}} \int_{\frac{s}{\epsilon}}^{r_{2}} \theta_{\alpha, \beta}\left(t-\epsilon r_{1}\right) \theta_{\alpha, \beta}\left(t-\epsilon r_{2}\right)\left(\frac{e^{-\delta\left(r_{2}-r_{1}\right)}}{\sqrt{r_{2}-r_{1}}}\right)^{\frac{\rho}{2+\rho}} d r_{1} d r_{2} \left(1+\sup_{r \leq T}|\overline \xi_r|^{\eta}+|v|^{\eta}\right)\\
&=C \epsilon^{2-2 \beta} \int_{0}^{\frac{t-s}{\epsilon}} y_{2}^{-\beta} e^{-\epsilon y_{2} \alpha} \int_{y_{2}}^{\frac{t-s}{\epsilon}} y_{1}^{-\beta} e^{-\epsilon y_{1} \alpha}\left(\frac{e^{-\delta\left(y_{1}-y_{2}\right)}}{\sqrt{y_{1}-y_{2}}}\right)^{\frac{\rho}{2+\rho}} d y_{1} d y_{2} \left(1+\sup_{r \leq T}|\overline \xi_r|^{\eta}+|v|^{\eta}\right)\\
&\leq C \epsilon^{2-2 \beta} \int_{0}^{\frac{t-s}{\epsilon}} y_{2}^{-2 \beta} e^{-2 \epsilon y_{2} \alpha} \int_{0}^{+\infty}\left(\frac{e^{-\delta y_{1}}}{\sqrt{y_{1}}}\right)^{\frac{\rho}{2+\rho}} d y_{1} d y_{2} \left(1+\sup_{r \leq T}|\overline \xi_r|^{\eta}+|v|^{\eta}\right)\\
& \leq C \epsilon^{2-2 \beta} \int_{0}^{\frac{t-s}{\epsilon}} y_2^{-2 \beta} e^{-2 \epsilon y_2 \alpha} d y_2 \left(1+\sup_{r \leq T}|\overline \xi_r|^{\eta}+|v|^{\eta}\right)\\
& = C  \epsilon^{2-2 \beta}(\epsilon \alpha)^{-(1-2 \beta)} \int_{0}^{\alpha(t-s)} r^{-2 \beta} e^{-2 r} d r \left(1+\sup_{r \leq T}|\overline \xi_r|^{ \eta}+|v|^{\eta}\right) \\
& \leq C \epsilon^{2-2 \beta}(\epsilon \alpha)^{-(1-2 \beta)}\left(1+\sup_{r \leq T}|\overline \xi_r|^{\eta}+|v|^{\eta}\right)\\
&=C \epsilon \alpha^{-(1-2 \beta)}\left(1+\sup_{r \leq T}|\overline\xi_r|^{\eta}+|v|^{\eta}\right),
\end{aligned}
\end{equation*}
so that by \eqref{eq:first_change_variable_product_integrals} we have the thesis for $n=1$.

Now assume that 
\begin{equation*}
 \int_{[s,t]^{j}}   \left | \mathbb{E} \prod_{i=1}^{j} \theta_{\alpha,\beta}(t-\epsilon r_i) \Psi_h (r_i)\right|   dr_1...dr_j  \leq C \left(1+\sup_{r \leq T}|\overline\xi_r|^{\eta_j}+|v|^{\eta_j}\right) \epsilon^{j/2} \left( \frac{1}{\alpha} \right)^{(1-2\beta)j/2},
\end{equation*}
for every even $j<2n$ where $\eta_j=j + \frac{\rho (j-1)}{(\rho+2)}$.\\
We prove that then it holds for $j=2n$.\\
Set for $r=(r_1,...r_{2n}) \in (s,t)^{2n}$ with $s \leq r_1 \leq ... \leq r_{2n} \leq t$ the integer $j(r)$ such that
\begin{equation*}
\max _{j=1, \ldots, n-1}\left(r_{2 j+1}-r_{2 j}\right)=r_{2 j(r)+1}-r_{2 j(r)}.
\end{equation*}
  and consider $H_\epsilon(s,t)$ given by \eqref{eq:H_eps}.     Recalling the definition of $J_j(r_1,...r_{2n})$ given by \eqref{eq:definition_J_j}  we have: 
 \begin{small}
\begin{equation*}
\begin{aligned}
\epsilon^{2n} H_{\epsilon}(s, t) &=C  \epsilon^{2n} \int_{\frac{s}{\epsilon}}^{\frac{t}{\epsilon}} \int_{\frac{s}{\epsilon}}^{r_{2 n}} ... \int_{\frac{s}{\epsilon}}^{r_{2 }}   \prod_{i=1}^{2n}\theta_{\alpha, \beta}\left(t-\epsilon r_{i}\right)   \left |  \mathbb{E}   \prod_{i=1}^{2n} \Psi_{h}\left(r_{i}\right) \right |  d r_{1} \cdots d r_{2 n} \\
& \leq C \epsilon^{2n} \int_{\frac{s}{\epsilon}}^{\frac{t}{\epsilon}} \int_{\frac{s}{\epsilon}}^{r_{2 n}} ... \int_{\frac{s}{\epsilon}}^{r_{2}} \prod_{i=1}^{2n} \theta_{\alpha, \beta} \left(t-\epsilon r_{i}\right) J_{ j(r)}\left(r_{1}, \ldots, r_{2 n}\right) d r_{1} \cdots d r_{2 n}\\
&\quad +C \epsilon^{2n}  \int_{\frac{s}{\epsilon}}^{\frac{t}{\epsilon}} \int_{\frac{s}{\epsilon}}^{r_{2 n}} ... \int_{\frac{s}{\epsilon}}^{r_{2}} \prod_{i=1}^{2n} \theta_{\alpha, \beta} \left(t-\epsilon r_{i}\right)\left|\mathbb{E} \prod_{i=1}^{2 j(r)} \Psi_{ h}\left(r_{i}\right)\mathbb{E} \prod_{i=2j(r)+1}^{2n} \Psi_{ h}\left( r_{i}\right)\right| 
 d r_{1} \cdots d r_{2 n}.\end{aligned}
\end{equation*}
 \end{small}
Note that by definition of $j(r)$,  for every $s \leq r_1 \leq ... \leq r_{2n} \leq t$, we have
 \begin{small}
$$\left|\mathbb{E} \prod_{i=1}^{2 j(r)} \Psi_{ h}\left(r_{i}\right)\mathbb{E} \prod_{i=2j(r)+1}^{2n} \Psi_{ h}\left( r_{i}\right)\right|  \leq\sum_{j=1}^{n-1} \left|\mathbb{E} \prod_{i=1}^{2 j} \Psi_{ h}\left(r_{i}\right)\mathbb{E} \prod_{i=2j+1}^{2n} \Psi_{ h}\left( r_{i}\right) \right|  . $$
 \end{small}
It follows:
  \begin{small}
\begin{equation}\label{eq:proof_lemma_inequality_for_H}
\begin{aligned}
 & \epsilon^{2n}   H_{\epsilon}(s, t)  \\
&    \leq C \epsilon^{2n} \int_{\frac{s}{\epsilon}}^{\frac{t}{\epsilon}} \int_{\frac{s}{\epsilon}}^{r_{2 n}} ... \int_{\frac{s}{\epsilon}}^{r_{2}} \prod_{i=1}^{2n} \theta_{\alpha, \beta} \left(t-\epsilon r_{i}\right) J_{ j(r)}\left(r_{1}, \ldots, r_{2 n}\right) d r_{1} \cdots d r_{2 n}  \\
&\quad   +C \epsilon^{2n}  \sum_{j=1}^{n-1}  \int_{\frac{s}{\epsilon}}^{\frac{t}{\epsilon}} \int_{\frac{s}{\epsilon}}^{r_{2 n}} ... \int_{\frac{s}{\epsilon}}^{r_{2}} \prod_{i=1}^{2n} \theta_{\alpha, \beta} \left(t-\epsilon r_{i}\right)\left|\mathbb{E} \prod_{i=1}^{2 j} \Psi_{ h}\left(r_{i}\right)\mathbb{E} \prod_{i=2j+1}^{2n} \Psi_{ h}\left( r_{i}\right)\right| 
 d r_{1} \cdots d r_{2 n}  \\
 &
= C \epsilon^{2n} \int_{\frac{s}{\epsilon}}^{\frac{t}{\epsilon}} \int_{\frac{s}{\epsilon}}^{r_{2 n}} ... \int_{\frac{s}{\epsilon}}^{r_{2}} \prod_{i=1}^{2n} \theta_{\alpha, \beta} \left(t-\epsilon r_{i}\right) J_{ j(r)}\left(r_{1}, \ldots, r_{2 n}\right) d r_{1} \cdots d r_{2 n}\\
&\quad +C \epsilon^{2n} \sum_{j=1}^{n-1} \int_{\frac{s}{\epsilon}}^{\frac{t}{\epsilon}} \int_{\frac{s}{\epsilon}}^{r_{2 j}} \ldots \int_{\frac{s}{\epsilon}}^{r_{2}} \prod_{i=1}^{2 j} \theta_{\alpha, \beta}\left(t-\epsilon r_{i}\right)\left|\mathbb{E} \prod_{i=1}^{2 j} \Psi_{ h}\left(r_{i}\right)\right| d r_{1} \cdots r_{2 j}\\
&\quad \quad  \times \int_{\frac{s}{\epsilon}}^{\frac{t}{\epsilon}} \int_{\frac{s}{\epsilon}}^{r_{2(n-j)}} \cdots \int_{\frac{s}{\epsilon}}^{r_{2}} \prod_{i=1}^{2(n-j)} \theta_{\alpha, \beta}\left(t-\epsilon r_{i}\right)\left|\mathbb{E} \prod_{i=1}^{2(n-j)} \Psi_{ h}\left( r_{i}\right)\right| d r_{1} \cdots r_{2(n-j)}\\
&=: \epsilon^{2n} I_{1, \epsilon} (s,t)+ \epsilon^{2n}I_{2, \epsilon} (s,t).
\end{aligned}
\end{equation}
 \end{small}
 
Now by \eqref{eq:lemma_estimate_mixing} and the definition of $j(r)$  we have:
\begin{align*}
J_{j(r)}\left(r_{1}, \ldots, r_{2 n}\right) &\leq C \left(1+\sup_{r \leq T}|\overline \xi_r|^{\eta}+|v|^{\eta}\right) \left(\frac{e^{-\delta \hat{r}_{j(r)}}}{\sqrt{\hat{r}_{j(r)}}}\right)^{\frac{\rho}{2+\rho}}\\
& =C \left(1+\sup_{r \leq T}|\overline \xi_r|^{\eta}+|v|^{\eta}\right) \left(\frac{e^{-\delta \max \left(r_{2 n}-r_{2 n-1}, r_{2n-1}-r_{2n-2},\cdots,r_{3}-r_{2}\right)}}{\sqrt{\max \left(r_{2 n}-r_{2 n-1}, r_{2n-1}-r_{2n-2},\cdots,r_{3}-r_{2}\right)}}\right)^{\frac{\rho}{2+\rho}}\\
&=C \left(1+\sup_{r \leq T}|\overline \xi_r|^{\eta}+|v|^{\eta}\right) \frac{e^{-\frac{\delta \rho}{2+\rho}  \max \left(r_{2 n}-r_{2 n-1}, r_{2n-1}-r_{2n-2},\cdots,r_{3}-r_{2}\right)}}{\left[\max \left(r_{2 n}-r_{2 n-1}, r_{2n-1}-r_{2n-2},\cdots,r_{3}-r_{2}\right) \right]^{\bar \rho}},
\end{align*}
 where $\bar{\rho}=\frac{\rho}{2(2+\rho)}$.\\
Recall that $r_{i+1} \geq r_{i}$ for every $i$ and note that $\max \left(r_{2 n}-r_{2 n-1}, r_{2n-1}-r_{2n-2}\right) \geq 1/2 (r_{2 n}-r_{2 n-2})$ and $\max \left(r_{2 n}-r_{2 n-1}, r_{2n-1}-r_{2n-2},\cdots,r_{3}-r_{2}\right) \geq \frac 1 n  \left(r_{2 n}-r_{2 n-1}+\sum_{i=1}^{n-1} r_{2 i+1}-r_{2 i}\right).$
Hence, since $g(t)=1/t^{\bar \rho}$, $f(t)=e^{-\alpha t}$ for $\alpha>0$ are decreasing, we have (recall that $C$ may change from line to line):
\begin{align*}
 J_{j(r)}\left(r_{1}, \ldots, r_{2 n}\right) 
&  \leq C \left(1+\sup_{r \leq T}|\overline \xi_r|^{\eta}+|v|^{\eta}\right) \frac{e^{-\frac{\delta \rho}{2+\rho}  \max \left(r_{2 n}-r_{2 n-1}, r_{2n-1}-r_{2n-2},\cdots,r_{3}-r_{2}\right)}}{\left[\max \left(r_{2 n}-r_{2 n-1}, r_{2n-1}-r_{2n-2}\right) \right]^{\bar \rho}} \\
&  \leq C \left(1+\sup_{r \leq T}|\overline \xi_r|^{\eta}+|v|^{\eta}\right) \frac{e^{-\frac{\delta \rho}{2+\rho}  \max \left(r_{2 n}-r_{2 n-1}, r_{2n-1}-r_{2n-2},\cdots,r_{3}-r_{2}\right)}}{\left(r_{2 n}-r_{2 n-2}\right)^{\bar{\rho}}}   \\
& \leq C \left(1+\sup_{r \leq T}|\overline \xi_r|^{\eta}+|v|^{\eta}\right)\frac{e^{-\frac{\delta \rho}{2+\rho} \frac 1 n  \left(r_{2 n}-r_{2 n-1}+\sum_{i=1}^{n-1} r_{2 i+1}-r_{2 i}\right)}}{\left(r_{2 n}-r_{2 n-2}\right)^{\bar{\rho}}}\\
& = C \left(1+\sup_{r \leq T}|\overline\xi_r|^{\eta}+|v|^{\eta}\right) \frac{e^{-\delta_{n}\left(r_{2 n}-r_{2 n-2}+\sum_{i=1}^{n-2} r_{2 i+1}-r_{2 i}\right)}}{\left(r_{2 n}-r_{2 n-2}\right)^{\bar{\rho}}},
\end{align*}
where in the last equality we have defined $\delta_{n}=\frac{\delta \rho}{n(2+\rho)}$ and we have used the following identity: $\sum_{i=1}^{n-1} r_{2 i+1}-r_{2 i}=r_{2n-1}-r_{2n-2}+ \sum_{i=1}^{n-2} r_{2 i+1}-r_{2 i}$.\\
We can apply this last inequality in order to estimate $I_{1,\epsilon}(s,t)$, i.e.
\begin{align*}
I_{1, \epsilon}(s,t)  & \leq C \left(1+\sup_{r \leq T}|\xi_r|^{\eta}+|v|^{\eta}\right)
\int_{\frac{s}{\epsilon}}^{\frac{t}{\epsilon}} \int_{\frac{s}{\epsilon}}^{r_{2 n}} \cdots  \int_{\frac{s}{\epsilon}}^{r_{2}} \frac{e^{-\delta_{n}\left(r_{2 n}-r_{2 n-2}\right)}}{\left(r_{2 n}-r_{2 n-2}\right)^{\bar{\rho}}} \\ &\times \prod_{i=1}^{2 n} \vartheta_{\alpha, \beta}\left(t-\epsilon r_{i}\right) \prod_{i=1}^{n-2} e^{-\delta_{n}\left(r_{2 i+1}-r_{2 i}\right)} d r_{1} \cdots d r_{2 n}\\
&=C \epsilon^{-2 n \beta} \int_{0}^{\frac{t-s}{\epsilon}} e^{-\left(\epsilon \alpha-\delta_{n}\right) y_{2 n}} y_{2 n}^{-\beta} \int_{y_{2 n}}^{\frac{t-s}{\epsilon}} e^{-\epsilon \alpha y_{2 n-1}} y_{2 n-1}^{-\beta} \int_{y_{2 n-1}}^{\frac{t-s}{\epsilon}} \frac{e^{-\left(\epsilon \alpha+\delta_{n}\right) y_{2 n-2}}}{\left(y_{2 n-2}-y_{2 n}\right)^{\bar{\rho}}} y_{2 n-2}^{-\beta}\\
&\times \int_{y_{2 n-2}}^{\frac{t-s}{\epsilon}} \cdots \int_{y_{3}}^{\frac{t-s}{\epsilon}} \prod_{i=2}^{2 n-2} e^{-\left(\epsilon \alpha+(-1)^{i} \delta_{n}\right) y_{i}} r_{i}^{-\beta} \int_{y_{2}}^{\frac{t-s}{\epsilon}} e^{-\epsilon \alpha y_{1}} y_{1}^{-\beta} d y_{1} \cdots d y_{2 n}  \left(1+\sup_{r \leq T}|\xi_r|^{\eta}+|v|^{\eta}\right).
\end{align*}
Now for $k=1,2,3$, $i=1,3, ... 2n-1$ we obtain:
\begin{align}\label{eq:proof_lemma_eq_odd}
\int_{y_{i+1}}^{\frac{t-s}{\epsilon}} e^{-k \epsilon \alpha y_{i}} y_{i}^{-k \beta} d y_{i}&=(\epsilon \alpha)^{k \beta-1} \int_{\epsilon \alpha y_{i+1}}^{(t-s) \alpha} e^{-k y_{i}} y_{i}^{-k \beta} d y_{i} \nonumber \\
& \leq  C (\epsilon \alpha)^{k \beta-1}.
\end{align}
Now consider $i=2,4,...2n$, we have:
\begin{align}\label{eq:proof_lemma_eq_even}
\int_{y_{i+1}}^{\frac{t-s}{\epsilon}} e^{-\left(\epsilon \alpha+\delta_{n}\right) y_{i}} y_{i}^{-\beta} d y_{i} & \leq y_{i+1}^{-\beta} e^{-\left(\epsilon \alpha+\delta_{n}\right) y_{i+1}}\left(\epsilon \alpha+\delta_{n}\right)^{-1} \nonumber \\ 
& \leq C y_{i+1}^{-\beta} e^{-\left(\epsilon \alpha+\delta_{n}\right) y_{i+1}}.
\end{align}
Now by \eqref{eq:proof_lemma_eq_odd} and \eqref{eq:proof_lemma_eq_even} we have:
\begin{equation}\label{eq:proof_I1_first_part}
\begin{aligned}
 \int_{y_{2 n-2}}^{\frac{t-s}{\epsilon}} \cdots  \int_{y_{3}}^{\frac{t-s}{\epsilon}} &  \prod_{i=2}^{2 n-2} e^{-\left(\epsilon \alpha+(-1)^{i} \delta_{n}\right) y_{i}} y_{i}^{-\beta} \int_{y_{2}}^{\frac{t-s}{\epsilon}} e^{-\epsilon \alpha y_{1}} y_{1}^{-\beta} d y_{1} \cdots d y_{2 n-3} \\
{ } &\leq(\epsilon \alpha)^{\beta-1} (\epsilon \alpha)^{(2 \beta-1)(n-2)} \\
& =(\epsilon \alpha)^{n(2 \beta-1)-(3 \beta-1)}.
\end{aligned}
\end{equation}
Now consider the remaining term in the inequality for $I_{1,\epsilon}(s,t)$: by \eqref{eq:proof_lemma_eq_odd} and \eqref{eq:proof_lemma_eq_even}  we have
\begin{small}
\begin{equation}\label{eq:proof_I1_second_part}
\begin{aligned}
\int_{0}^{\frac{t-s}{\epsilon}} e^{-\left(\epsilon \alpha-\delta_{n}\right) y_{2 n}} y_{2 n}^{-\beta} & \int_{y_{2 n}}^{\frac{t-s}{\epsilon}} e^{-\epsilon \alpha y_{2 n-1}} y_{2 n-1}^{-\beta} \int_{y_{2 n-1}}^{\frac{t-s}{\epsilon}} \frac{e^{-\left(\epsilon \alpha+\delta_{n}\right) y_{2 n-2}}}{\left(y_{2 n-2}-y_{2 n}\right)^{\bar{\rho}}} y_{2 n-2}^{-\beta} d y_{2 n-2} d y_{2 n-1} d y_{2 n} \\
&\leq C \int_{0}^{\frac{t-s}{\epsilon}} e^{-3 \epsilon \alpha y_{2 n}} y_{2 n}^{-3 \beta} \int_{y_{2 n}}^{\frac{t-s}{\epsilon}} \frac{e^{-\left(2 \epsilon \alpha+\delta_{n}\right)\left(y_{2 n-1}-y_{2 n}\right)}}{\left(y_{2 n-1}-y_{2 n}\right)^{\bar{\rho}}} d y_{2 n-1} d y_{2 n} \\
&\leq C \int_{0}^{\frac{t-s}{\epsilon}} e^{-3 \epsilon \alpha y_{2 n}} y_{2 n}^{-3 \beta} \int_{0}^{\infty} \frac{e^{-\delta_{n} y_{2 n-1}}}{y_{2 n-1}^{\bar{\rho}}} d y_{2 n-1} d y_{2 n} \\
&\leq C(\epsilon \alpha)^{3 \beta-1}.
\end{aligned}
\end{equation}
\end{small}
Applying now \eqref{eq:proof_I1_first_part} and \eqref{eq:proof_I1_second_part} to the inequality for $I_{1,\epsilon}(s,t)$ we have:
\begin{equation}
\begin{aligned}
I_{1, \epsilon} & \leq C \epsilon^{-2 n \beta} (\epsilon \alpha)^{n(2 \beta-1)-(3 \beta-1)} (\epsilon \alpha)^{3 \beta-1} \left(1+\sup_{r \leq T}|\overline \xi_r|^{\eta}+|v|^{\eta}\right) \\
&=C \epsilon^{-n} \alpha^{-(1-2 \beta)n} \left(1+\sup_{r \leq T}|\overline \xi_r|^{\eta}+|v|^{\eta}\right).
\end{aligned}
\end{equation}
In addition by the inductive hypothesis we have:
\begin{equation}
I_{2, \epsilon} \leq C  \epsilon^{-n} \alpha^{-(1-2 \beta)n} \left(1+\sup_{r \leq T}|\overline \xi_r|^{\eta}+|v|^{\eta}\right).
\end{equation}
Finally applying the last two inequalities to \eqref{eq:proof_lemma_inequality_for_H} and then going back to \eqref{eq:first_change_variable_product_integrals} we have the thesis.
\end{proof}
\section{Order of convergence}\label{sec:order}
In this section we finally investigate the order of convergence. We first prove the following proposition which will be crucial in the derivation of the order of convergence. 
\begin{proposition}\label{crucial_lemma_2}
There exist $C=C(T)>0$, $\iota>0$ such that
$$\mathbb{E}\left[ \sup_{0 \leq t \leq \tau} \left| \int_{0}^t e^{A_1 (t-s)} \left(F(U_{s},V_s^\epsilon)-\overline{F}(U_s)\right) ds \right|^2 \right]\leq C \epsilon (1+|u|^{\iota}+|v|^{\iota}),$$
for every $0 \leq  \tau \leq T$, $\epsilon>0$, $u \in H,v \in K$.
\end{proposition}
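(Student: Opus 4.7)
The plan is to combine the factorization method, an eigenbasis expansion of $A_1$, and lemma \ref{lemma:crucial_lemma} applied on each sub-interval where the slow component is frozen. Fix $\beta$, $n$, $\mu$ as in hypothesis \ref{hp:operator1}, so that in particular $2n\beta>1$ and both series converge, and set $\Phi_s:=F(U^\epsilon_{\lfloor s/\Delta\rfloor\Delta},\tilde V_s^\epsilon)-\overline F(U^\epsilon_{\lfloor s/\Delta\rfloor\Delta})$ and $\eta_t:=\int_0^t e^{A_1(t-s)}\Phi_s\,ds$. First I would apply the factorization identity
\begin{equation*}
\eta_t=\frac{\sin(\pi\beta)}{\pi}\int_0^t e^{A_1(t-r)}(t-r)^{\beta-1}Y_r\,dr,\qquad Y_r:=\int_0^r e^{A_1(r-s)}(r-s)^{-\beta}\Phi_s\,ds,
\end{equation*}
and Holder's inequality with exponent $p=2n>1/\beta$ to reduce the sup-in-$t$ estimate to a time integral of moments: $\sup_{t\leq\tau}|\eta_t|^{2n}\leq C\int_0^\tau |Y_r|^{2n}\,dr$.

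Next I would expand $Y_r=\sum_k Y_r^k e_k$ in the eigenbasis of $A_1$, where $Y_r^k=\int_0^r e^{-\alpha_k(r-s)}(r-s)^{-\beta}\langle\Phi_s,e_k\rangle\,ds$, and use Jensen's inequality with the probability weights $\alpha_k^{-\mu}/C_\mu$, where $C_\mu:=\sum_k\alpha_k^{-\mu}<+\infty$, to obtain
\begin{equation*}
|Y_r|^{2n}=\Bigl(\sum_k|Y_r^k|^2\Bigr)^n\leq C_\mu^{n-1}\sum_k\alpha_k^{\mu(n-1)}|Y_r^k|^{2n}.
\end{equation*}
The problem is thereby reduced to the scalar moment bound $\mathbb{E}|Y_r^k|^{2n}\leq C(1+|u|^\xi+|v|^\xi)\,\epsilon^n\,\alpha_k^{-(1-2\beta)n}$, which, once proven, is summed against $\alpha_k^{\mu(n-1)}$; the resulting series $\sum_k\alpha_k^{n(\mu+2\beta-1)-\mu}$ converges by hypothesis \ref{hp:operator1}.

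To prove the scalar moment bound I would split $Y_r^k$ along the grid points $j\Delta$. On each sub-interval $[j\Delta,(j+1)\Delta)$ the slow component is frozen at $U_{j\Delta}^\epsilon$ and, by the Markov property, $\tilde V^\epsilon_{j\Delta+\cdot}$ is distributed, conditionally on $\mathcal{F}_{j\Delta}$, as the fast motion $v^{U_{j\Delta}^\epsilon,V_{j\Delta}^\epsilon}$ driven by a fresh $Q_2$-Wiener process. After the change of variable $s=j\Delta+\epsilon\sigma$, the single-sub-interval contribution to $|Y_r^k|^{2n}$ has exactly the form treated by lemma \ref{lemma:crucial_lemma} with $\alpha=\alpha_k$ and $h=e_k$; applying that lemma conditionally on $\mathcal{F}_{j\Delta}$, and using the a priori moment estimates of lemmas \ref{lemma:a_priori_estimates} and \ref{lemma:moments_fast_motion} to handle the random $(1+|U_{j\Delta}^\epsilon|^\xi+|V_{j\Delta}^\epsilon|^\xi)$ factor after taking expectation, produces the required $\epsilon^n\alpha_k^{-(1-2\beta)n}$ gain per sub-interval. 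Integrating in $r\in[0,\tau]$ and invoking $\mathbb{E}\sup|\eta_t|^2\leq(\mathbb{E}\sup|\eta_t|^{2n})^{1/n}$ then concludes.

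The main technical obstacle is the passage from the per-sub-interval estimate to a genuine $L^{2n}$-bound on the full $Y_r^k$: the multilinear expectation $\mathbb{E}\prod_{i=1}^{2n}\langle\Phi_{s_i},e_k\rangle$ appearing in $\mathbb{E}|Y_r^k|^{2n}$ must be controlled when the $s_i$ fall into distinct sub-intervals, where the frozen slow components differ. The diagonal terms (all $s_i$ in the same sub-interval) are covered directly as above, but the cross-terms require a further argument combining the exponential localization near $r$ provided by $e^{-\alpha_k(r-s)}$ inside $\theta_{\alpha_k,\beta}$ with the exponential decorrelation of the fast motion across sub-intervals (lemmas \ref{lemma:mixing_2} and \ref{lemma:mixing_3}), so that the factor $\lfloor\tau/\Delta\rfloor$ arising from the number of sub-intervals does not destroy the $\epsilon^n$ gain.
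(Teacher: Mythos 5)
Your toolkit is the right one (factorization, eigenbasis expansion with the $\alpha_k^{-\mu}$ weights, Lemma \ref{lemma:crucial_lemma} with $h=e_k$, $\alpha=\alpha_k$, freezing on each sub-interval, a-priori estimates, and a final H\"older step from the $2n$-th to the second moment), but the order in which you deploy it creates a genuine gap that you acknowledge and do not close. By applying the factorization identity globally on $[0,t]$, your $Y_r$ (and hence $Y_r^k$) integrates $\Phi_s$ over all of $[0,r]$, so $\mathbb{E}|Y_r^k|^{2n}$ contains multilinear terms $\mathbb{E}\prod_{i}\langle\Phi_{s_i},e_k\rangle$ with the $s_i$ spread over several sub-intervals, on each of which the ``frozen'' slow argument is a \emph{different random variable} $U^\epsilon_{j\Delta}$. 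Lemma \ref{lemma:crucial_lemma}, and the mixing Lemmas \ref{lemma:mixing_2}, \ref{lemma:mixing_3} behind it, are proved only for the fast motion $v^{u,v}$ with a single \emph{deterministic} pair $(u,v)$; the freezing lemma makes them usable only when all time points lie in one sub-interval, conditionally on $\mathcal{F}_{j\Delta}$. Your sketched fix for the cross terms (exponential localization from $e^{-\alpha_k(r-s)}$ plus decorrelation across sub-intervals) is not carried out and is not obviously sufficient: for small $\alpha_k$ the kernel gives no localization, and the decorrelation lemmas are not stated for the actual process $\tilde V^\epsilon$ whose drift changes (randomly) from one sub-interval to the next. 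So the ``main technical obstacle'' you name is precisely the missing proof.

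The paper avoids cross terms altogether by reversing your first two steps: it first writes $Z_t=\sum_k\int_{k\Delta}^{(k+1)\Delta}e^{A_1(t-s)}\bigl(F(U^\epsilon_{k\Delta},\tilde V_s^\epsilon)-\overline F(U^\epsilon_{k\Delta})\bigr)ds$ and applies the factorization identity \emph{inside each sub-interval}, with the kernel anchored at $(k+1)\Delta$, i.e. $Z_t=\frac{\sin\beta\pi}{\pi}\sum_k\int_{k\Delta}^{(k+1)\Delta}((k+1)\Delta-s)^{\beta-1}e^{A_1(t-s)}Y^{k,\epsilon}_s\,ds$ with $Y^{k,\epsilon}_s=\int_{k\Delta}^{s}(s-r)^{-\beta}e^{A_1(s-r)}\bigl(F(U^\epsilon_{k\Delta},\tilde V_r^\epsilon)-\overline F(U^\epsilon_{k\Delta})\bigr)dr$. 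Each $Y^{k,\epsilon}_s$ then involves only one sub-interval, one frozen pair $(U^\epsilon_{k\Delta},V^\epsilon_{k\Delta})$, and the eigenbasis/Parseval argument plus Lemma \ref{lemma:crucial_lemma} is run for \emph{fixed} $(u,v)$ on the auxiliary process $v^{u,v,\epsilon}$, yielding $\mathbb{E}|Y^\epsilon_s|^{2n}\le C\epsilon^n(1+|u|^\eta+|v|^\eta)$; the freezing lemma and Lemma \ref{lemma:a_priori_estimates} transfer this to $\mathbb{E}|Y^{k,\epsilon}_s|^{2n}\le C\epsilon^n(1+|u|^\eta+|v|^\eta)$ uniformly in $k$. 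The hypothesis $\beta>1/(2n)$ is exactly what makes the per-sub-interval kernel $((k+1)\Delta-s)^{\beta-1}$ integrable to the conjugate power in the H\"older step for $\sup_t|Z_t|^{2n}$, so summing over $k$ costs only a constant and no decorrelation across sub-intervals is ever needed. If you restructure your argument this way, the rest of your proposal goes through.
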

\begin{proof} We proceed with similar techniques to the ones in the proof of \cite[Theorem 4.1]{Cerrai_normal_dev}.\\
Define 
$$Z_t^\epsilon := \int_{0}^t e^{A_1 (t-s)} \left( F(U_{s},V_s^\epsilon)-\overline{F}(U_s)\right) ds.$$
We proceed using the factorization method, e.g. see \cite[Chapter 5, Section 3]{daprato_stochastic_eq}: fix $\zeta>0,n \geq 2$ integer and $ 1/(2n)< \beta < 1/3$ as in Hypothesis \ref{hp:operator1}, then:
\begin{align*}
Z_t^\epsilon = \frac{\sin{\beta \pi}}{\pi} \int_{0}^{t}   (t -s)^{\beta-1} e^{A_1 ( t -s) }   Y_s^{\epsilon} ds,
\end{align*}
where
\begin{align*}
Y_s^{\epsilon}=\int_{0}^{ s} (s-r)^{- \beta} e^{A_1(s-r) } \left( F(U_{r},V_r^\epsilon)-\overline{F}(U_r)\right)dr.
\end{align*}
By Holder's inequality  we have:
\begin{align}\label{eq:proof_crucial_lemma_est_sup_Z_with_Y}
\mathbb E \left[ \sup_{t \in [0,\tau]} |Z_t^\epsilon|^{2n} \right] 
& \leq     C  \int_0^\tau \mathbb E |Y_s^{\epsilon}|^{2n}  ds.
\end{align}
We now claim that there exist $C=C(T)>0$ and $\eta>0$ such that
\begin{equation}\label{eq:bound_expectation_Y_epsilon}
\mathbb{E}\left|Y^{\epsilon}_s\right|^{2 n}\leq C \epsilon^{n}\left(1+|u|^\eta+|v|^\eta\right) ,
\end{equation}
for every $0 \leq t_0 \leq  s \leq T$, $\epsilon>0$, $u \in H,v \in K$.\\
Indeed first recall the spectral representation \eqref{eq:spectral_representation_A_1}.
Then by Parseval's identity,  Holder's inequality, Hypothesis \ref{hp:operator1} and the properties of conditional expectations we have:
\begin{small}
\begin{align*}
\mathbb{E} \left|Y^{\epsilon}_s\right|^{2 n}=& \mathbb{E} \left(\sum_{k=1}^{\infty} \alpha_k^{-\frac{(n-1)\zeta}{ n}}  \alpha_k^{\frac{(n-1)\zeta}{ n}} \left|\int_{0}^{s}(s-r)^{-\beta} e^{-(s-r) \alpha_{k}}\langle F(U_{r},V_r^\epsilon)-\overline{F}(U_r), e_{k}\rangle d r\right|^{2}\right)^{n} \nonumber \\ 
& \leq \left(\sum_{k=1}^{\infty} \alpha_{k}^{-\zeta}\right)^{n-1} \sum_{k=1}^{\infty} \Big [ \alpha_{k}^{(n-1)\zeta}  \mathbb{E} \left|\int_{0}^{s}(s-r)^{-\beta} e^{-(s-r) \alpha_{k}}\langle F(U_{r},V_r^\epsilon)-\overline{F}(U_r), e_{k}\rangle d r\right|^{2n} \Bigg ] \nonumber  \\
& \leq C \sum_{k=1}^{\infty} \Big [ \alpha_{k}^{(n-1)\zeta}  \mathbb{E} \left|\int_{0}^{s}(s-r)^{-\beta} e^{-(s-r) \alpha_{k}}\langle F(U_{r},V_r^\epsilon)-\overline{F}(U_r), e_{k}\rangle d r\right|^{2n} \Bigg ] \nonumber  \\
& =  C\sum_{k=1}^{\infty} \Big [ \alpha_{k}^{(n-1)\zeta}  \mathbb{E}\int_{[0, s]^{2 n}} \prod_{i=1}^{2 n}\left(s-r_{i}\right)^{-\beta} e^{-\left(s-r_{i}\right) \alpha_{k}} \langle F(U_{r_i},V_{r_i}^\epsilon)-\overline{F}(U_{r_i}), e_{k} \rangle 
d r_{1} \cdots d r_{2 n} \Big ] \nonumber 
\\
& =  C\sum_{k=1}^{\infty} \Bigg [ \alpha_{k}^{(n-1)\zeta}  \mathbb{E} \Bigg[  \mathbb{E} \Bigg[   \int_{[0, s]^{2 n}} \prod_{i=1}^{2 n}\left(s-r_{i}\right)^{-\beta} e^{-\left(s-r_{i}\right) \alpha_{k}} \langle F(U_{r_i},V_{r_i}^\epsilon)-\overline{F}(U_{r_i}), e_{k} \rangle  \\
& \quad \quad \quad \quad \quad \quad \quad \quad \quad \quad \quad \quad \quad \quad
d r_{1} \cdots d r_{2 n}  \Bigg | \left \{U_r,  \forall r \leq s \right \}\Bigg ]  \Bigg ]  \Bigg ].
\end{align*}
\end{small}
Now consider the conditional expectation on the right-hand-side.
Fixing $s \geq 0$, due to the independence of the averaged component $\{U_r,  \forall r \leq s \}$ (which is $\sigma  \{ W_r^{Q_1}, \forall r \leq s  \} -$measurable) and the fast component $\{V_r^\epsilon,  \forall r \leq s \}$ (which is $\sigma \{ W_r^{Q_2}, \forall r \leq s  \}- $measurable being independent of $\sigma \{ W_r^{Q_1}, \forall r \leq s  \} $), for every $\overline  \xi \in C([0,s]; H)$ we have 
\begin{small}
\begin{align*}
\mathbb{E}  \Bigg[   \int_{[0, s]^{2 n}}& \prod_{i=1}^{2 n}\left(s-r_{i}\right)^{-\beta} e^{-\left(s-r_{i}\right) \alpha_{k}} \langle F(U_{r_i},V_{r_i}^\epsilon)-\overline{F}(U_{r_i}), e_{k} \rangle 
d r_{1} \cdots d r_{2 n}  \Bigg | \{U_r,  \forall r \leq s \}= \{\overline  \xi_r,  \forall r \leq s \} \Bigg ] \\
& = \mathbb{E} \Bigg[   \int_{[0, s]^{2 n}} \prod_{i=1}^{2 n}\left(s-r_{i}\right)^{-\beta} e^{-\left(s-r_{i}\right) \alpha_{k}} \langle F(\overline  \xi_{r_i},V_{r_i}^\epsilon)-\overline{F}(\overline  \xi_{r_i}), e_{k} \rangle 
d r_{1} \cdots d r_{2 n}  \Bigg ]\\
& \leq   \int_{[0, s]^{2 n}} \Bigg | \mathbb{E} \Bigg[  \prod_{i=1}^{2 n}\left(s-r_{i}\right)^{-\beta} e^{-\left(s-r_{i}\right) \alpha_{k}} \langle F(\overline  \xi_{r_i},V_{r_i}^\epsilon)-\overline{F}(\overline  \xi_{r_i}), e_{k} \rangle \Bigg ]  \Bigg  |
d r_{1} \cdots d r_{2 n}  \\
& \leq C \left(1+\sup_{r \leq T}|\overline \xi_r|^{\eta}+|v|^{\eta}\right) \epsilon^{n} \left( \frac{1}{\alpha_k} \right)^{(1-2\beta)n},
\end{align*}
\end{small}
where the last inequality follows by Lemma \ref{lemma:crucial_lemma}.\\
Hence we have:
\begin{align*}
\mathbb{E} \Bigg[   \int_{[0, s]^{2 n}} & \prod_{i=1}^{2 n}\left(s-r_{i}\right)^{-\beta} e^{-\left(s-r_{i}\right) \alpha_{k}} \langle F(U_{r_i},V_{r_i}^\epsilon)-\overline{F}(U_{r_i}), e_{k} \rangle 
d r_{1} \cdots d r_{2 n}  \Bigg | \{U_r,  \forall r \leq s \}\Bigg ] \\
&  \leq C \left(1+\sup_{r \leq T}|U_r|^{\eta}+|v|^{\eta}\right) \epsilon^{n} \left( \frac{1}{\alpha_k} \right)^{(1-2\beta)n}.
\end{align*}
Inserting this inequality in the one for $\mathbb{E} \left|Y^{\epsilon}_s\right|^{2 n}$ we have:
\begin{align*}
\mathbb{E}\left|Y^{\epsilon}_s\right|^{2 n} & \leq C  \epsilon^{n}\left(1+\mathbb E \left [\sup_{r \leq T}|U_{r}|^\eta\right]+|v|^\eta\right) \sum_{k=1}^{\infty} \alpha_{k}^{(n-1)\zeta-(1-2 \beta)n}\\
& \leq C   \epsilon^{n}\left(1+|u|^\eta+|v|^\eta\right) \sum_{k=1}^{\infty} \alpha_{k}^{n(\zeta+2\beta-1)-\zeta}.
\end{align*}
The series on the right-hand-side is convergent by Hypothesis \ref{hp:operator1} and we have \eqref{eq:bound_expectation_Y_epsilon}, so that the claim is proved.

Inserting \eqref{eq:bound_expectation_Y_epsilon} into \eqref{eq:proof_crucial_lemma_est_sup_Z_with_Y} we have:
\begin{align*}
\mathbb E \left[ \sup_{t \in [0,\tau]} |Z_t^\epsilon|^{2n} \right] 
\leq C   \epsilon^{n} \left(1+|u|^\eta+|v|^\eta\right).
\end{align*}
Finally  by Holder's inequality we have the thesis:
\begin{align*}
\mathbb{E}\left[ \sup_{t \in [0,\tau]} \left|Z_t^\epsilon \right|^{2 } \right] & \leq  \left (\mathbb{E}\left[ \sup_{t \in [0,\tau]} \left|Z_t^\epsilon \right|^{2 n} \right] \right)^{1/n}  \leq C  \epsilon\left(1+|u|^\iota+|v|^\iota\right),
\end{align*}
for $\iota=\eta / n$.
\end{proof}
We can now state and prove the main Theorem of this work:
\begin{theorem}\label{th:main_theorem}
Let $u \in H$, $v \in K$ and assume Hypotheses \ref{hp:operator1}, \ref{hp:operator2}, \ref{hp:coefficients}, \ref{hp:hilbert_schmidt}, \ref{hp:covariance_inverse}. Then there exists $C=C(T,|u|,|v|)>0$ such that
\begin{equation*}
\mathbb{E}\left [ \sup_{t\in [0,T]} \abs{U^\epsilon_t-U_t}^2 \right] \leq C \epsilon ,
\end{equation*}
for every $\epsilon>0$.
\end{theorem}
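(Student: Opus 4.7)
Since the crucial proposition and the two discretization lemmas have already been established, the theorem follows by a classical decomposition combined with Gronwall's inequality. The plan is to write the difference in mild form
\begin{equation*}
U^\epsilon_t - U_t = \int_0^t e^{A_1(t-s)}\bigl[F(U^\epsilon_s, V^\epsilon_s) - \overline F(U_s)\bigr]\,ds,
\end{equation*}
and to decompose the integrand into four pieces by inserting and subtracting terms evaluated at the discretized slow state and the approximated fast process:
\begin{align*}
F(U^\epsilon_s, V^\epsilon_s) - \overline F(U_s)
&= \bigl[F(U^\epsilon_s, V^\epsilon_s) - F(U^\epsilon_{\lfloor s/\Delta\rfloor\Delta}, \tilde V_s^\epsilon)\bigr] \\
&\quad + \bigl[F(U^\epsilon_{\lfloor s/\Delta\rfloor\Delta}, \tilde V_s^\epsilon) - \overline F(U^\epsilon_{\lfloor s/\Delta\rfloor\Delta})\bigr] \\
&\quad + \bigl[\overline F(U^\epsilon_{\lfloor s/\Delta\rfloor\Delta}) - \overline F(U^\epsilon_s)\bigr] \\
&\quad + \bigl[\overline F(U^\epsilon_s) - \overline F(U_s)\bigr].
\end{align*}

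I would then estimate the $L^2$ norm of $\sup_{t\leq\tau}$ of the four corresponding integrals separately. Using $\|e^{A_1 t}\|\leq 1$ and Cauchy--Schwarz in time, the first piece is controlled by Lipschitz continuity of $F$, lemma \ref{lemma:continuity_from_time_U} applied to $U^\epsilon_s - U^\epsilon_{\lfloor s/\Delta\rfloor\Delta}$ (whose time increment is at most $\Delta$), and lemma \ref{lemma:discretization_v}, yielding $C\Delta^{2\alpha}(1+|u|_\alpha^2+|v|^2)$. The second piece is exactly what proposition \ref{crucial_lemma_2} controls, producing $C\epsilon(1+|u|^\xi+|v|^\xi)$. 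The third piece, by Lipschitzianity of $\overline F$ (established in section \ref{sec:averaged_eq}) together with lemma \ref{lemma:continuity_from_time_U}, again gives $C\Delta^{2\alpha}$. The fourth piece, once more by Lipschitzianity of $\overline F$, yields the Gronwall-ready bound $C\int_0^\tau \mathbb{E}\sup_{r\leq s}|U^\epsilon_r - U_r|^2\,ds$.

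Setting $\phi(\tau):=\mathbb{E}\sup_{t\leq\tau}|U^\epsilon_t - U_t|^2$, these estimates combine into
\begin{equation*}
\phi(\tau) \leq C\bigl(\epsilon + \Delta^{2\alpha}\bigr) + C\int_0^\tau \phi(s)\,ds,
\end{equation*}
and Gronwall's lemma gives $\phi(T)\leq C(\epsilon+\Delta^{2\alpha})$. Optimising over the discretization step by choosing $\Delta=\epsilon^{1/(2\alpha)}$ balances the two error contributions and delivers $\phi(T)\leq C\epsilon$, which is precisely the claimed estimate. The essential averaging work sits inside proposition \ref{crucial_lemma_2}, whose proof rests on the mixing-based lemma \ref{lemma:crucial_lemma}; the only remaining delicate point at the level of the theorem is ensuring the decomposition above is organised so that $\sup_{t\leq\tau}$ can be pulled inside each of the four integrals (via $\|e^{A_1 t}\|\leq 1$ and Cauchy--Schwarz) before Gronwall is invoked. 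No further obstacle is anticipated.
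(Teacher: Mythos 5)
Your proposal is correct and follows essentially the same route as the paper: the Khasminskii time-discretization, proposition \ref{crucial_lemma_2} for the averaged term, lemmas \ref{lemma:continuity_from_time_U} and \ref{lemma:discretization_v} for the discretization errors, Gronwall's lemma, and the balancing choice $\Delta=\epsilon^{1/(2\alpha)}$. The only cosmetic difference is that you run Gronwall directly on $\mathbb{E}\bigl[\sup_{t\le\tau}|U^\epsilon_t-U_t|^2\bigr]$ with a four-term splitting, whereas the paper runs it on $\tilde U^\epsilon-U$ with three terms and recovers $U^\epsilon-U$ at the end via lemma \ref{lemma:dicretization_U}; your first piece is precisely the content of that lemma, so the two arguments coincide.
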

\begin{proof}
For $t \in [0,T]$ we have:
\begin{align*}
U_t^\epsilon - U_t  =  \int_{0}^t e^{A_1 (t-s)} \left( F(U_{s}^\epsilon,V_s^\epsilon)-\overline{F}(U_s)\right) ds,
\end{align*}
so that: 
\begin{align*}
\left | U_t^\epsilon - U_t \right| & \leq \left| \int_{0}^t e^{A_1 (t-s)} \left( F(U_{s}^\epsilon,V_s^\epsilon)-F(U_{s},V_s^\epsilon)\right) ds \right| +\left| \int_{0}^t e^{A_1 (t-s)} \left( F(U_{s},V_s^\epsilon)-\overline{F}(U_s)\right) ds \right|.
\end{align*}
Now let $0 \leq \tau \leq T$ and compute
\begin{align*}
\mathbb{E}\left[ \sup_{0 \leq t \leq \tau} \left | U_t^\epsilon - U_t \right|^2 \right] 
& \leq  2 \mathbb{E}\left[ \sup_{0 \leq t \leq \tau} \left| \int_{0}^t e^{A_1 (t-s)} \left( F(U_{s}^\epsilon,V_s^\epsilon)-F(U_{s},V_s^\epsilon)\right) ds \right|^2 \right] \\
&   + 2 \mathbb{E}\left[ \sup_{0 \leq t \leq \tau}\left| \int_{0}^t e^{A_1 (t-s)} \left( F(U_{s},V_s^\epsilon)-\overline{F}(U_s)\right) ds \right|^2 \right].
\end{align*}
For the first term on the right-hand-side by the Lipschitz continuity of $F$ we have:
\begin{align*}
\mathbb{E}\left[ \sup_{0 \leq t \leq \tau}\left| \int_{0}^t e^{A_1 (t-s)} \left( F(U_{s}^\epsilon,V_s^\epsilon)-F(U_{s},V_s^\epsilon)\right) ds \right|^2 \right] & \leq C \int_{0}^\tau \mathbb{E}  \left| U_s^{\epsilon}-U_s\right|^2 ds . 
\end{align*}
For the second term on the right-hand-side by Proposition \ref{crucial_lemma_2} we have:
\begin{align*}
 \mathbb{E}\left[ \sup_{0 \leq t \leq \tau} \left| \int_{0}^t e^{A_1 (t-s)} \left( F(U_{s},V_s^\epsilon)-\overline{F}(U_s)\right) ds \right|^2 \right]
 \leq C \epsilon.
\end{align*}
Putting everything together we have:
\begin{align}\label{eq:proof_eq_for_gronwall_order_conv}
\mathbb{E}\left[ \sup_{0 \leq t \leq \tau} \left |  U_t^\epsilon - U_t \right|^2 \right] \leq C \left (\epsilon + \int_{0}^\tau \mathbb{E}  \left| U_s^{\epsilon}-U_s\right|^2 ds  \right)  ,
\end{align}
for every $\tau \leq  T$.\\
Then by Gronwall's Lemma we have the thesis of the Theorem:
\begin{align*}
\mathbb{E}\left[ \sup_{0 \leq t \leq T} \left | U_t^\epsilon - U_t \right|^2 \right] \leq C \epsilon.
\end{align*}
\end{proof}
Finally we can provide an application to which our theory can be applied and which is not covered by the existing literature.
\begin{example}
Consider the following fully coupled slow-fast stochastic reaction-diffusion system:
\begin{equation}\label{eq:spde}
\begin{cases}
\frac{\partial u_{\varepsilon}}{\partial t}(t, \xi)= \frac{\partial^2}{\partial \xi^2} u_{\varepsilon}(t, \xi)+f\left(\xi, u_{\varepsilon}(t, \xi), v_{\varepsilon}(t, \xi)\right) + \dot {w}_1(t, \xi) ,\\
 \frac{\partial v_{\varepsilon}}{\partial t}(t, \xi)= \frac{1}{\varepsilon}\left[\left(\frac{\partial^2}{\partial \xi^2} - \lambda \right) v_{\varepsilon}(t, \xi)+g\left(\xi, v_{\varepsilon}(t, \xi)\right)\right] +\frac{1}{\sqrt{\varepsilon}} \dot {w}_2(t, \xi) ,\\  
 u_{\varepsilon}(0, \xi)=u (\xi), \quad v_{\varepsilon}(0, \xi)=v(\xi), \quad \xi \in [0,L] ,\\
 u_{\varepsilon}(t, \xi)=  v_{\varepsilon}(t, \xi)=0, \quad t \geq 0, \quad \xi=0,L,
\end{cases}
\end{equation}
where 
\begin{itemize}
\item $t \in [0,T], \xi \in [0,L]$,
\item  $\epsilon \in (0,1]$ is a small parameter representing the ratio of time-scales between the two variables of the system $u_\epsilon$ and $v_\epsilon$,
\item $u_\epsilon$ and $v_\epsilon$ are the slow and fast components respectively,
\item $u,v	\in  H= L^2[0,T]$ are the initial conditions,
\item $\lambda>0$,
\item $f,g \colon [0,L] \times \mathbb{R} \rightarrow  \mathbb{R}$ are Lipschitz functions uniformly wrt $\xi$ with Lipschitz constants $L_f,L_G$ respectively and $L_G < \lambda$,
\item $\dot {w}_1, \dot {w}_2$ are independent white noises both in time and space.
\end{itemize}
Then it is well known \cite{Cerrai} that \eqref{eq:spde} can be rewritten in the abstract form \eqref{eq:abstract_system} where $H=K=L^2[0,T]$, $F\colon H\times H\rightarrow  H$, $G\colon H \rightarrow  H$ are the Nemytskii operators of $f,g$ respectively, i.e.
\begin{align*}
&F(x,y)(\xi)=f(\xi,x(\xi),y(\xi)), \\
&G(y)(\xi)=g(\xi,y(\xi)).
\end{align*}
In this setting the hypotheses of Theorem \ref{th:main_theorem} are satisfied (recall Remarks \ref{rem:laplacian_hp_1}, \ref{rem:covariances_laplacian}) so that the result can be applied.
\end{example}
\appendix

\section{Proof of Lemma \ref{lemma:mixing_1}}
\begin{proof}
First observe that $$\mathcal{H}_s^{t}(v)=\sigma(\mathcal{C}_s^t(v),0 \leq s \leq t),$$
where 
\begin{equation}\label{eq:proof_cylindrical_sets}
\mathcal{C}_s^t(v)=\{v_{r_1}^{v} \in A_1,...v_{r_k}^{v} \in A_k : k \in \mathbb{N}, s \leq r_1< ... < r_k \leq t, A_1, ... A_k \in \mathcal{B}(K) \}
\end{equation}
is the family of cylindrical sets.\\
Consider $B_{1} \in \mathcal{C}_{0}^{t}(v)$ and $B_{2} \in \mathcal{C}_{s+t}^{\infty}(v)$, i.e.
$$
B_{1}=\bigcap_{i=1}^{k_{1}}\left\{v^{v}_{r_{1, i}} \in A_{1, i}\right\}, \quad B_{2}=\bigcap_{i=1}^{k_{2}}\left\{v^{v}_{r_{2, i}} \in A_{2, i}\right\},
$$
for $0 \leq r_{1,1}< \cdots <r_{1, k_{1}} \leq t$ and $s+t \leq r_{2,1}<\cdots<r_{2, k_{2}}<\infty$ and $A_{j, i} \in \mathcal{B}(K)$, for $j=1,2$ and $i=1, \ldots, k_{j}$.\\ 
First by the tower property of conditional expectations we have:
\begin{align}\label{eq:proof_probability_intersection}
\mathbb{P}\left(B_{1} \cap B_{2}\right) &=\mathbb{E}\left[\prod_{i=1}^{k_{1}} 1_{A_{1, i}}\left(v^{v}_{r_{1, i}}\right) \prod_{i=1}^{k_{2}} 1_{A_{2, i}}\left(v^{v}_{r_{2, i}}\right)\right] \nonumber \\
&=\mathbb{E}\left[ \prod_{i=1}^{k_{1}} 1_{A_{1, i}}\left(v^{v}_{r_{1, i}}\right) \mathbb{E}\left[\prod_{i=1}^{k_{2}} 1_{A_{2, i}}\left(v^{v}_{r_{2, i}}\right) \bigg | \mathcal{F}_{t}\right]\right],
\end{align}
where $1_{A_{i,j}}(\cdot)$ is the indicator function.\\
Now, as $t+s \leqslant r_{2,1}<\cdots<r_{2, k_{2}}$, we have:
$$
\begin{aligned}
\mathbb{E}\left[ \prod_{i=1}^{k_{2}} 1_{A_{2, i}}\left(v^{v}_{r_{2, i}}\right) \bigg | \mathcal{F}_{t}\right] &=\mathbb{E}\left[1_{A_{2,1}}\left(v^{v}_{r_{2,1}}\right) \mathbb{E}\left[\prod_{i=2}^{k_{2}} 1_{A_{2, i}}\left(v^{v}_{r_{2, i}}\right)  \bigg |  \mathcal{F}_{r_{2,1}}\right]  \bigg | \mathcal{F}_{t}\right] \\
&=\mathbb{E}\left[1_{A_{2,1}}\left(v^{v}_{r_{2,1}}\right) \mathbb{E}\left[1_{A_{2,2}}\left(v^{v}_{r_{2,2}}\right) \mathbb{E}\left[\prod_{i=3}^{k_{2}} 1_{A_{2, i}}\left(v^{v}_{r_{2, i}}\right)  \bigg |  \mathcal{F}_{r_{2,2}}\right]  \bigg |  \mathcal{F}_{r_{2,1}}\right]  \bigg | \mathcal{F}_{t}\right].
\end{aligned}
$$
By iteration we have:
\begin{small}
\begin{align*}
\mathbb{E}\left[\prod_{i=1}^{k_{2}} 1_{A_{2, i}}\left( v^{v}_{r_{2, i}}\right) \bigg | \mathcal{F}_{t}\right]& =\mathbb{E}\left[   1_{A_{2,1}}\left(v^{v}_{r_{2,1}}\right) \mathbb{E}\left[ 1_{A_{2,2}}\left(v^{v}_{r_{2,2}}\right)\mathbb{E}\left[1_{A_{2,3}}\left(v^{v}_{r_{2,3}}\right) \cdots \mathbb{E}\left[1_{A_{2, k_{2}}}\left(v^{v}_{r_{2, k_{2}}}\right) \mid \mathcal{F}_{r_{2, k_{2}-1}} \right]|\cdots | \mathcal{F}_{r_{2,2}} \right]  | \mathcal{F}_{r_{2,1}} \right] \mid \mathcal{F}_{t}    \right ]\\
& =P_{r_{2,1}-t}\left[1_{A_{2,1}} P_{r_{2,2}-r_{2,1}}\left(1_{A_{2,2}} P_{r_{2,3}-r_{2,2}}\left( \cdots I_{A_{2,k_2-1}} P_{r_{2,k_2}-r_{2,k_2-1}}\left ( I_{A_{2,k_2}} \right) \cdots \right)\right)\right]\left(v^{v}_t\right),
\end{align*}
\end{small}
so that by \eqref{eq:proof_probability_intersection} we have:
\begin{small}
\begin{align*}
P(B_1 \cap B_2)= \mathbb{E}\left[ \prod_{i=1}^{k_{1}} 1_{A_{1, i}}\left(v^{v}_{r_{1, i}}\right) P_{r_{2,1}-t}\left[1_{A_{2,1}} P_{r_{2,2}-r_{2,1}}\left(1_{A_{2,2}} P_{r_{2,3}-r_{2,2}}\left( \cdots I_{A_{2,k_2-1}} P_{r_{2,k_2}-r_{2,k_2-1}}\left ( I_{A_{2,k_2}} \right) \cdots \right)\right)\right]\left(v^{v}_t\right)\right].
\end{align*}
\end{small}
In a similar way we have:
$$
\mathbb{E} \prod_{i=1}^{k_{2}} 1_{A_{2, i}}\left(v^{v}\left(r_{2, i}\right)\right)=P_{r_{2,1}-t}\left[1_{A_{2,1}} P_{r_{2,2}-r_{2,1}}\left(1_{A_{2,2}} P_{r_{2,3}-r_{2,2}}\left( \cdots I_{A_{2,k_2-1}} P_{r_{2,k_2}-r_{2,k_2-1}}\left ( I_{A_{2,k_2}} \right) \cdots \right)\right)\right]\left(v\right),
$$
so that
\begin{small}
\begin{align*}
P(B_1)P(B_2)&=\mathbb{E} \prod_{i=1}^{k_{1}} 1_{A_{1, i}}\left(v^{v}_{r_{1, i}}\right) \mathbb{E} \prod_{i=1}^{k_{2}} 1_{A_{2, i}}\left(v^{v}\left(r_{2, i}\right)\right)\\
& =\mathbb{E} \prod_{i=1}^{k_{1}} 1_{A_{1, i}}\left(v^{v}_{r_{1, i}}\right) P_{r_{2,1}-t}\left[1_{A_{2,1}} P_{r_{2,2}-r_{2,1}}\left(1_{A_{2,2}} P_{r_{2,3}-r_{2,2}}\left( \cdots I_{A_{2,k_2-1}} P_{r_{2,k_2}-r_{2,k_2-1}}\left ( I_{A_{2,k_2}} \right) \cdots \right)\right)\right]\left(v\right).
\end{align*}
\end{small}
It follows
\begin{small}
\begin{align*}
\mathbb{P}\left(B_{1} \cap B_{2}\right) &-\mathbb{P}\left(B_{1}\right) \mathbb{P}\left(B_{2}\right)\\
&=\mathbb{E}  \Bigg [ \prod_{i=1}^{k_{1}} 1_{A_{1, i}}\left(v^{u,v}\left(r_{1, i}\right)\right) \Bigg \{ P_{r_{2,1}-t}\left[1_{A_{2,1}} P_{r_{2,2}-r_{2,1}}\left(1_{A_{2,2}} P_{r_{2,3}-r_{2,2}}\left( \cdots I_{A_{2,k_2-1}} P_{r_{2,k_2}-r_{2,k_2-1}}\left ( I_{A_{2,k_2}} \right) \cdots \right)\right)\right]\left(v^{v}_t\right)\\
& \quad \quad   - P_{r_{2,1}-t}\left[1_{A_{2,1}} P_{r_{2,2}-r_{2,1}}\left(1_{A_{2,2}} P_{r_{2,3}-r_{2,2}}\left( \cdots I_{A_{2,k_2-1}} P_{r_{2,k_2}-r_{2,k_2-1}}\left ( I_{A_{2,k_2}} \right) \cdots \right)\right)\right]\left(v\right) \Bigg \}  \Bigg ] \\
& = \mathbb{E}  \Bigg [ \prod_{i=1}^{k_{1}} 1_{A_{1, i}}\left(v^{v}\left(r_{1, i}\right)\right) \Bigg \{ P_{r_{2,1}-t}\phi \left(v^{v}_t\right)  -  P_{r_{2,1}-t} \phi(v) \Bigg \}  \Bigg ] \\
& = \mathbb{E}  \Bigg [ \prod_{i=1}^{k_{1}} 1_{A_{1, i}}\left(v^{v}\left(r_{1, i}\right)\right) \Bigg \{ P_{r_{2,1}-t}\phi \left(v^{v}_t\right)  - \int_K \phi(v) \mu(dv) \Bigg \}  \Bigg ] ,
\end{align*}
\end{small}
for $\phi \coloneq 1_{A_{2,1}} P_{r_{2,2}-r_{2,1}}\left(1_{A_{2,2}} P_{r_{2,3}-r_{2,2}}\left( \cdots I_{A_{2,k_2-1}} P_{r_{2,k_2}-r_{2,k_2-1}}\left ( I_{A_{2,k_2}} \right) \cdots \right)\right)$.\\
 Then by Lemmas \ref{lemma:moments_fast_motion} and \ref{lemma:convergence_equilibrium} and as $f(s)=e^{-\delta s} s^{-1/2}$ is decreasing we have:
\begin{align*}
\left|\mathbb{P}\left(B_{1} \cap B_{2}\right)-\mathbb{P}\left(B_{1}\right) \mathbb{P}\left(B_{2}\right)\right| &\leq C \frac{ e^{-\delta\left(r_{2,1}-t\right)}}{\sqrt{r_{2,1}-t}}\left(1+\mathbb{E}\left|v^{v}_t\right|+|v|\right)\\ & \leq C \frac{ e^{-\delta s}}{\sqrt{s}}\left(1+|v|\right),
\end{align*}
so that the inequality holds when $B_{1} \in \mathcal{C}_{0}^{t}(v)$ and $B_{2} \in \mathcal{C}_{t+s}^{\infty}(v)$.\\
Finally recalling \eqref{eq:proof_cylindrical_sets} the validity of the inequality can be extended to every $B_{1} \in \mathcal{H}_{0}^{t}(v)$ and $B_{2} \in \mathcal{H}_{s+t}^{\infty}(v)$.
\end{proof}
\section{Proof of Lemma \ref{lemma:mixing_2}}
\begin{proof}
As $|\xi_1| \leq 1$ a.s. we have:
\begin{align*}
\left|\mathbb{E} \left[ \xi_{1} \xi_{2} \right] -\mathbb{E} \xi_{1} \mathbb{E} \xi_{2}\right| & =\left|\mathbb{E}\left[\xi_{1} \mathbb{E}\left[\xi_{2} \mid \mathcal{H}_{s_{1}}^{t_{1}}(v)\right]\right]-\mathbb{E} \xi_{1} \mathbb{E} \xi_{2}\right| \\
& =\left|\mathbb{E}\left[\xi_{1}\left ( \mathbb{E}\left[\xi_{2} \mid \mathcal{H}_{s_{1}}^{t_{1}}(v)\right]-\mathbb{E} \xi_{2}\right )\right]\right| \\
& \leq\left|\mathbb{E}\left[\tilde \xi_{1}\left (\mathbb{E}\left[\xi_{2} \mid \mathcal{H}_{s_{1}}^{t_{1}}(v)\right]-\mathbb{E} \xi_{2}\right )\right]\right|\\
 & =\left|\mathbb{E} \left[  \tilde \xi_{1} \xi_{2} \right] -\mathbb{E} \tilde \xi_{1} \mathbb{E} \xi_{2}\right|,
\end{align*}
where we have defined
$$
\tilde \xi_{1} \coloneq 
\begin{cases} 
1, \quad & \textit{if} \quad  \mathbb{E}\left[\xi_{2} \mid \mathcal{H}_{s_{1}}^{t_{1}}(v)\right]-\mathbb{E} \xi_{2}>0,\\
-1, \quad &\textit{if} \quad   \mathbb{E}\left[\xi_{2} \mid \mathcal{H}_{s_{1}}^{t_{1}}(v)\right]-\mathbb{E} \xi_{2} \leq 0.
\end{cases}
$$
Similarly, as $|\xi_2| \leq 1$ a.s., we have:
\begin{align*}
\left|\mathbb{E} [ \tilde \xi_{1} \xi_{2} ] -\mathbb{E} \tilde \xi_{1} \mathbb{E} \xi_{2}\right|& =\left|\mathbb{E}\left[ \xi_{2}   \mathbb{E}\left[\tilde \xi_{1} \mid \mathcal{H}_{s_{2}}^{t_{2}}(v)\right]\right]-\mathbb{E} \tilde \xi_{1} \mathbb{E} \xi_{2}\right| \\ 
& =\left|\mathbb{E}\left[\xi_{2}\left ( \mathbb{E}\left[\tilde \xi_{1} \mid \mathcal{H}_{s_{1}}^{t_{1}}(v)\right]-\mathbb{E}\tilde  \xi_{1}\right )\right]\right| \\
& \leq \left|\mathbb{E}\left[\tilde \xi_{2}\left ( \mathbb{E}\left[\tilde \xi_{1} \mid \mathcal{H}_{s_{1}}^{t_{1}}(v)\right]-\mathbb{E}\tilde  \xi_{1}\right )\right]\right| \\
& =\left|\mathbb{E}[\tilde \xi_{1} \tilde \xi_{2} ]-\mathbb{E} \tilde \xi_{1}\mathbb{E} \tilde \xi_{2}\right|,
\end{align*}
where we have defined
$$
\tilde \xi_{2} \coloneq 
\begin{cases} 
1, \quad & \textit{if} \quad  \mathbb{E}\left [\tilde \xi_{1} \mid \mathcal{H}_{s_{2}}^{t_{2}}(v)\right ]-\mathbb{E} \tilde \xi_{1}>0,\\
-1, \quad & \textit{if} \quad  \mathbb{E}\left[\tilde \xi_{1} \mid \mathcal{H}_{s_{2}}^{t_{2}}(v)\right]-\mathbb{E} \tilde \xi_{1} \leq 0.
\end{cases}
$$
Then it follows:
\begin{align*}
\left|\mathbb{E} [ \xi_{1} \xi_{2} ]-\mathbb{E} \xi_{1} \mathbb{E} \xi_{2}\right| & \leq \left|\mathbb{E} [\tilde \xi_{1}\tilde \xi_{2}]-\mathbb{E} \tilde \xi_{1}\mathbb{E} \tilde \xi_{2}\right|.
\end{align*}
Notice that 
\begin{align*}
& \tilde \xi_{1}=2 1_{A}-1, \quad  \tilde \xi_{2}= 2 1_{B}-1
\end{align*}
with 
\begin{align*}
& A=\left\{\omega \in \Omega : \mathbb{E}\left [\xi_{2} \mid \mathcal{H}_{s_{1}}^{t_{1}}(v)\right]-\mathbb{E} \xi_{2}>0\right\},\\
& B=\left\{\omega \in \Omega: \mathbb{E}\left[\tilde \xi_{1}\mid \mathcal{H}_{s_{2}}^{t_{2}}(v)\right]-\mathbb{E} \tilde \xi_{1}>0\right\}.
\end{align*}
It follows:
\begin{align*}
\left|\mathbb{E} [ \xi_{1} \xi_{2} ]-\mathbb{E} \xi_{1} \mathbb{E} \xi_{2}\right| & \leq \left|\mathbb{E} [\tilde \xi_{1}\tilde \xi_{2}]-\mathbb{E} \tilde \xi_{1}\mathbb{E} \tilde \xi_{2}\right|
= |\mathbb E[(2 1_A-1)(2 1_B-1)]-\mathbb E[2 1_A-1]\mathbb E[2 1_B-1]| \\
&= 4|\mathbb E[1_A 1_B]-\mathbb E[ 1_A]\mathbb E[1_B]|= 4|\mathbb{P}(A \cap B)-\mathbb{P}(A) \mathbb{P}(B)|.
\end{align*}
Now by Lemma \ref{lemma:mixing_1} as $A \in \mathcal{H}_{s_{1}}^{t_{1}}(v), B \in \mathcal{H}_{s_{2}}^{t_{2}}(v)$ we have:
$$
\left|\mathbb{E} [\xi_{1} \xi_{2}]-\mathbb{E} \xi_{1} \mathbb{E} \xi_{2}\right| \leq C \frac{e^{-\delta\left(s_{2}-t_{1}\right)}}{\sqrt{s_{2}-t_{1}}}\left(1+|v|\right)
$$
so that we have the thesis of the Lemma.
\end{proof}
\section{Proof of Lemma \ref{lemma:mixing_3}}
\begin{proof}
We proceed in a similar way to the proof of \cite[Proposition 3.3]{Cerrai_normal_dev}.\\
Indeed fix $R>0$ and set
$A_{1, R}=\left\{\omega \in \Omega : \left|\xi_{1}\right| \leq R\right\}$, $A_{2, R}=\left\{\omega \in \Omega:\left|\xi_{2}\right| \leq R\right\}$.\\
Then we have:
\begin{align*}
\mathbb{E} [\xi_{1} \xi_{2}]-\mathbb{E} \xi_{1} \mathbb{E} \xi_{2}=& \mathbb{E}\left[\xi_{1} \xi_{2} \left (1_{A_{1, R} \cap A_{2, R}}+ 1_{ A_{1, R}^{c} \cup A_{2, R}^{c}} \right )\right]-\mathbb{E} \xi_{1} \mathbb{E} \xi_{2}  \\
=&\left\{\mathbb{E} [ \xi_{1} 1_{A_{1, R}} \xi_{2} 1_{A_{2, R}} ]-\mathbb{E} [ \xi_{1} 1_{A_{1, R}}] \mathbb{E} [\xi_{2} 1_{A_{2, R}}]\right \}+\mathbb{E} [\xi_{1} \xi_{2} 1_{A_{1, R}^{c} \cup A_{2, R}^{c}}] \nonumber \\
&-\left\{\mathbb{E} [\xi_{1} 1_{A_{1, R}} ]\mathbb{E}[ \xi_{2} 1_{A_{2, R}^{c}}]+\mathbb{E}[ \xi_{1} 1_{A_{1, R}^{c}}] \mathbb{E} [\xi_{2} 1_{A_{2, R}}]+\mathbb{E}[ \xi_{1} 1_{A_{1, R}^{c}}] \mathbb{E} [\xi_{2} 1_{A_{2, R}^{c}}]\right\}  \\
=& T_{1, R}+T_{2, R}+T_{3, R} .
\end{align*}
Consider $T_{1, R}$, then we have:
$$
T_{1, R}=R^{2}\left(\mathbb{E} \left [ \frac{\xi_{1}}{R} 1_{A_{1, R}} \frac{\xi_{2}}{R} 1_{A_{2, R}} \right]-\mathbb{E} \left [\frac{\xi_{1}}{R} 1_{A_{1, R}} \right ]\mathbb{E} \left[\frac{\xi_{2}}{R} 1_{A_{2, R}} \right]\right).
$$
Now as $\left|\frac{\xi_{i}}{R} 1_{A_{i, R}}\right| \leq 1$ a.s then by Lemma \ref{lemma:mixing_2} we have:
$$
\left|T_{1, R}\right| \leq C  R^{2} \frac{e^{-\delta\left(s_{2}-t_{1}\right)}}{\sqrt{s_{2}-t_{1}}} \left(1+|v|\right). 
$$
For $T_{2, R}$ by Holder's and Markov's inequalities we have:
\begin{align*}
\left|T_{2, R}\right|^{\frac{2}{1-\rho}} & \leq \mathbb{E}\left|\xi_{1}\right|^{\frac{2}{1-\rho}} \mathbb{E}\left|\xi_{2}\right|^{\frac{2}{1-\rho}}\left(\mathbb{P}\left(A_{1, R}^{c}\right)+\mathbb{P}\left(A_{2, R}^{c}\right)\right)^{\frac{2 \rho}{1-\rho}}\\
 & \leq \mathbb{E}\left|\xi_{1}\right|^{\frac{2}{1-\rho}} \mathbb{E}\left|\xi_{2}\right|^{\frac{2}{1-\rho}} R^{-\frac{2 \rho}{1-\rho}}\left(\mathbb{E}\left|\xi_{1}\right|+\mathbb{E}\left|\xi_{2}\right|\right)^{\frac{2 \rho}{1-\rho}}
\end{align*}
and then by \eqref{eq:lemma-hp_mixing_3} we have:
$$
\left|T_{2, R}\right| \leq C K_1 K_2 (K_1+K_2)^{\rho }R^{-\rho}.
$$
For the first term of $T_{3,R}$ we have:
$$|\mathbb{E} [\xi_{1} 1_{A_{1, R}} ]\mathbb{E}[ \xi_{2} 1_{A_{2, R}^{c}}] |\leq \left(\mathbb{E}|\xi_1|^{\frac{2}{1-\rho}} \right)^{\frac{1-\rho}{2}}\left(\mathbb{E}|\xi_2|^{\frac{2}{1-\rho}} \right)^{\frac{1-\rho}{2}}\mathbb{P}\left(A_{2, R}^{c}\right)^\rho.$$
Also the other terms can be treated in an analogous way and then similarly to before we have:
$$
\left|T_{3, R}\right| \leq C K_1 K_2 (K_1+K_2)^{\rho }R^{-\rho}.
$$
Now by inserting the inequalities for $T_{i,R}$ into the first equation we have:
$$
\left|\mathbb{E} \xi_{1} \xi_{2}-\mathbb{E} \xi_{1} \mathbb{E} \xi_{2}\right| \leq C \frac{e^{-\delta\left(s_{2}-t_{1}\right)}}{\sqrt{s_{2}-t_{1}}} \left(1+|v|\right)  R^{2}+C  K_1 K_2 (K_1+K_2)^{\rho } R^{-\rho}.
$$
By minimizing over $R>0$ the right-hand-side of the previous inequality we have:
$$
\left|\mathbb{E} \xi_{1} \xi_{2}-\mathbb{E} \xi_{1} \mathbb{E} \xi_{2}\right| \leq C K_1^{\frac{2}{2+\rho}} K_2^{\frac{2}{2+\rho}} (K_1+K_2)^{\frac{2 \rho}{2+\rho}}  \left(\frac{e^{-\delta ( s_{2}-t_{1})}}{\sqrt{s_{2}-t_{1}}} \left(1+|v|\right) \right)^{\frac{\rho}{2+\rho}} .
$$
\end{proof}
\section*{Acknowledgements}
The author thanks warmly his PhD supervisor Giuseppina Guatteri for her precious and constant support during the preparation of this manuscript. He also thanks Sandra Cerrai and Gianmario Tessitore for some relevant conversations related to the content of the present paper. Finally he thanks the two anonimous referees for their very careful reading of the manuscript and their precious remarks and suggestions.

\end{document}